\newcommand\blfootnote[1]{%
  \begingroup
  \renewcommand\thefootnote{}\footnote{#1}%
  \addtocounter{footnote}{-1}%
  \endgroup
}
\title{Apolarity for determinants and permanents of generic symmetric matrices}
\author{Sepideh Shafiei\\ \footnotesize{Department of Mathematics, Northeastern University, 
Boston, MA 02115, USA}}
\newtheorem{thm}{Theorem}[section]
\newtheorem{lem}[thm]{Lemma}
\newtheorem{cor}[thm]{Corollary}
\newtheorem{prop}[thm]{Proposition}
\newtheorem*{ack}{Acknowledgment}
\theoremstyle{definition}
\newtheorem{ex}[thm]{Example}
\theoremstyle{definition}
\newtheorem{defi}[thm]{Definition}
\theoremstyle{definition}
\newtheorem*{Notation}{Notation}
\theoremstyle{definition}
\newtheorem*{rmk}{Remark}
\newtheorem*{example}{Example}
\theoremstyle{definition}
\newtheorem{remark}[thm]{Remark}
\begin{document}
\maketitle
\blfootnote{sepideh.shafiee@gmail.com}

\begin{abstract} 
We show that the apolar ideal to the determinant of a generic symmetric matrix is generated in degree two, and the apolar ideal to the permanent of a generic symmetric matrix is generated in degrees two and three. In each case we specify the generators of the apolar ideal. As a consequence, using a result of K.~Ranestad and F. O. Schreyer, we give lower bounds to the cactus rank and rank of each of these polynomials. We compare these bounds with those obtained by J. Landsberg and Z. Teitler. We also determine for some cases the irreducible $S_n$ invariants involved in the apolar ideal to the permanent of a symmetric matrix. 
\end{abstract}

\section{Introduction}

This paper is a sequel and companion to \cite{Sh}. However, it can be read independently. In this paper we determine the annihilator ideals of the determinant and the permanent of an $n\times n$ generic symmetric matrix $X$. Here annihilator is meant in the sense of the apolar pairing, i.e. Macaulay's inverse system. In section~two, we review the doset basis of the space of the $t\times t$ minors of $X$. In section~three, we determine the generators of the apolar ideal to the determinant and permanent of $X$ (Theorems \ref{main-theorem-det-symmetric} and \ref{thm:perm-sym-main-theorem}). In section four, we apply our results  to find a lower bound for the scheme/cactus rank of the determinant and permanent of the generic symmetric matrix (Theorems \ref{thm:RS-rank-symm-det} and \ref{thm:RS-rank-symm-perm}). In section five, we give a representation-theoretical explanation of the degree three generators of the ideal $\mathrm{Ann}(\mathrm{perm}(X))$ in Lemma~\ref{lem:rep explanation 6}.                       
\vspace{0.2 in}

Let  $\sf k$ be an infinite field of characteristic zero or characteristic $p>2$, and let$X=(x_{ij})$ be an $n\times n$ symmetric matrix whose entries are the  $\frac{n(n+1)}{2}$ distinct variables $x_{ij}$, $1\leq i \leq j\leq n$. The determinant and the permanent of $X$ are polynomials of degree $n$. Let $R^s=\sf k$$[ x_{ij}]  (i\leq j),$ be the polynomial ring and $S^s=\sf k$$[y_{ij}]  (i\leq j),$ be the ring of differential operators associated to $R^s$, and let $R^s_k$ and $S^s_k$ denote the degree-$k$ homogeneous summands. $S^s$ acts on $R^s$ by differentiation, denoted by $S^s\circ R^s$.\par

\vspace{.2in}

\begin{defi}\label{divided-power-definition}(\cite{IK}, Appendix A) Let $\sf{k}$ be a field of arbitrary characteristic. Let $R=\sf {k}$$[z_1,\cdots,z_r]=\oplus_{j\ge 0}R_j$, Let $\mathcal D$ be the graded dual of $R$, i.e.

$$
\mathcal D=\oplus_{j\ge 0}Hom_k(R_j,k)=\oplus_{j\ge 0} \mathcal D_j.
$$

We consider the vector space $R_1$ with the basis $z_1,\cdots,z_r$ and the left action of $GL_r(k)$ on $R_1$ defined by $A z_i=\sum_{j=1}^{j=r}A_{ij}z_j$. Since $R=\oplus_{j\ge 0}Sym^{j}R_1$ this action extends to an action of $GL_r(k)$ on $R$. By duality this action determines a left action of $GL_r(k)$ on $\oplus_{j\ge 0} \mathcal D_j$. We denote by $z^U=z_1^{u_1}\cdots z_r^{u_r}, |U|=u_1+\cdots+u_r=j$ the standard monomial basis of $R_j$. 
We denote by
$$
Z^U=Z_1^{[u_1]}\cdots Z_r^{[u_r]}
$$

the basis of $\mathcal D_j$ dual to the basis $\{z^U:|U|=j\}$. We call these elements divided power monomials. We call the elements of $\mathcal D_j$ divided power forms, and the elements of $\mathcal D$ divided power polynomials. We extend the definition of $Z^{[U]}$ to multi-degrees $U=(u_1,\cdots,u_r)$ with negative components by letting $Z^{[U]}=0$ if $u_i<0$ for some $i$.

We define a ring structure on $\mathcal D$ by setting the product of two monomials to be 
$$
Z^{[U]}\cdot Z^{[V]}={{U+V}\choose{U}}Z^{[U+V]},
$$

where ${{U+V}\choose{U}}$ is a product of binomial coefficients. This is extended by linearity and gives $\mathcal D$ a structure of a $\sf{k}$-algebra.
\end{defi}
\vspace{.2in}

\begin{defi}\label{def:divided-power-IK-co}

(\cite{IK}, page 267) Let $\mathcal D$ be the divided power ring. Let $L=a_1x_1+\cdots+a_rx_r\in \mathcal D_1$. The divided power $L^{[j]}$ is defined as 
$$L^{[j]}=\sum_{j_1+\cdots +j_r=j} a_1^{j_1} \cdots a_r^{j_r} x_1^{[j_1]} \cdots x_r^{[j_r]}.$$

\end{defi}

Let $S^s=\sf k$$[y_{ij}]$ and $R^s=\sf k$$[x_{ij}]$. $S^s$ acts on $R^s$ by contraction as follows:

\begin{equation}\label{eq:contraction-co}
 ({y_{ij}})^k \circ_{co} (x_{uv})^\ell=\begin{cases}
  x_{uv}^{\ell-k} &  \text{if $(i,j)=(u,v)$ and $k\leq l$},\\
   0 &  \text{otherwise}.
\end{cases}
\end{equation}

This action extends multilinearly to the action of $S^s$ on $R^s$.

\par

\vspace{.2in}

\begin{remark}
For a field $\sf{k}$ with $\mathrm {char}\sf{k}=0$ or greater than the degree of the polynomial $F$, the contraction action on the divided power ring is an analogue of the partial differential operator action on the usual polynomial ring. If we consider the usual polynomial ring but find the apolar ideals using the contraction instead of differentiation, which is an unusual choice, the answer will be less regular compared to section three.  Taking the contraction yields information about writing $\det (X)$ as the sum of divided powers, not usual powers. Hence both the Hilbert function and generators of the apolar ideal are different and less regular for contraction versus differentiation.
\end{remark}

\vspace{.2in}
\begin{defi}\label{ann-length-degree-newdefi}

To each degree-$j$ homogeneous element, $F\in R^s_j$ we can associate the ideal $I=\mathrm {Ann} (F)$ in $S^s=\sf k$$[y_{ij}]$ consisting of polynomials $\Phi$ such that $\Phi\circ F=0$. We call $I=\mathrm {Ann} (F)$, the apolar ideal of $F$;  and the quotient algebra $S^s/\mathrm {Ann} (F)$ the apolar algebra of $F$. If $h\in S^s_k$ and $F \in R^s_n$, then we have $h\circ F\in R^s_{n-k}$.\par
Let $F \in R^s$, then $\mathrm {Ann} (F) \subset S^s$ and we have  
 
 \begin{equation*}
{(\mathrm {Ann}(F))}_{k}=\{ h \in S^s_k|h \circ F=0\}.
 \end{equation*}
 
We define the $\deg(\mathrm {Ann}(F))$ to be the length of $S^s/Ann(F)$. Let $V$ be a vector subspace of $R^s$, and define
 
 \begin{equation*}
 \mathrm {Ann}(V)=\{ h \in S|h \circ F=0 \text{ for all } F\in V\}.
 \end{equation*}
 
 \end{defi}
 
 \begin{remark}

Let $\sf k$ be a field of characteristic $0$ or $p>i$. Let $\phi: (S^s_i,R^s_i)\rightarrow \sf k$ be the pairing $\phi(g,f)=g\circ f$,  and $V$ be a vector subspace of $R^s_k$. Let $V^\perp$ denote $\mathrm{Ann}(V)\cap S^s_k$. We have
 \begin{equation*}
 \dim_{\sf k} (V^\perp)=\dim_{\sf k} S^s_k-\dim_{\sf k} V.
 \end{equation*}
 
 \end{remark}

Let $F$ be a form of degree $j$ in $R^s$. We denote by  $\langle F \rangle _{j-k}$ the vector space $S^s_k \circ F \subset R^s_{j-k}$. 
\vspace{0.2in}

\begin{remark}(see \cite{IK}, Lemma 2.15)\label{remark:introIK} 
Let $F \in R^s$ and $\deg F=j$ and $k \leq j$. Then we have

 \begin{equation}\label{eq:introIK}
(\mathrm{Ann}(F))_k=\{h \in S^s_k| h \circ (S^s_{j-k}\circ F)=0\}=(\mathrm {Ann} (S^s_{j-k}\circ F))_k.
 \end{equation}

\end{remark}

\vspace{.2in}

We define the homomorphism $\xi:R^s\rightarrow S^s$ by setting $\xi(x_{ij})=y_{ij}$; for a monomial $v\in R^s$ we denote by  $\hat{v}=\xi(v)$ the corresponding monomial of $S^s$, and $Y=\xi(X)$.
\vspace{.2in}
\begin{remark} \label{remark:main-ann} (see \cite{Sh}, Remark 2.8)
Let $f= \sum_{i=1}^{i=k} \alpha_i v_i  \in R^s_n$, with $\alpha_{i}\in \sf k$ and with $v_i$'s linearly independent monomials. Then we have
\begin{equation*}
\mathrm {Ann}(f) \cap S^s_n=\langle \{\alpha_j \hat{v_1}-\alpha_1\hat{v_j}\mid 2\leq j \leq n\},\langle v_1,\ldots,v_k \rangle ^\perp \rangle,
\end{equation*}
where $\langle v_1,\ldots ,v_k \rangle ^\perp=\mathrm {Ann}(\langle v_1,\ldots ,v_k \rangle)\cap S^s_n$.
\end{remark}

Denote by $\mathfrak A_X=S^s/(\mathrm {Ann} (\det (X))$ the \emph{apolar algebra} of the determinant of the matrix $X$. Recall that the Hilbert function of $\mathfrak A_X$ is defined by $H(\mathfrak A_X)_i=\dim_{\sf{k}} (\mathfrak A_X)_{i}$ for all $i=0,1,\ldots \,.$ 
\vspace{0.2in}


\subsection{Summary of main results}

\begin{itemize} 

\item We specify the Hilbert sequence corresponding to the apolar algebras in the usual contraction-divided powers pairing, or, equivalently, in the differentiation-usual powers pairing of the following homogeneous polynomials

\begin{itemize}
\item Determinant of a generic symmetric matrix (Table 1). This uses the Conca Theorem \ref{thm:doset-Conca}.
\item Permanent of the generic symmetric matrix (Table 2).
\end{itemize}

\item We specify the generators of the apolar ideal in each of the following cases:
\begin{itemize} 
\item Determinant of a generic symmetric $n\times n$ matrix $X$. This ideal is generated by certain $2\times 2$ permanents of $Y=\xi (X)$, certain degree two trinomials that are Hafnians of $4\times 4$ symmetric submatrices of $Y$ and some monomials (Proposition \ref{prop:gen-det-symm}). In particular, this ideal is generated in degree two (Theorem \ref{main-theorem-det-symmetric}).
\item Permanent of a generic symmetric $n\times n$ matrix $X$. This ideal is generated by certain $2\times 2$ minors of $Y=\xi (X)$, certain degree three polynomials corresponding to $6\times 6$ Hafnians of $Y$ and some degree two monomials (Proposition \ref{prop:gens-deg2-perm-symm}, Lemma \ref{prop:gens-deg3-perm-symm}). In particular, this ideal is generated in degrees two and three (Theorem \ref{thm:perm-sym-main-theorem}).
\end{itemize}

\item  In each of the above cases the proof has several main steps:
\begin{itemize}   
\item Let $I$ be the the apolar ideal. We identify the dual module to $S/I$, so we determine $S_i\circ F$, where $F$ is the invariant.  
\item  For the determinant we determine $I_2$, where $I$ is the apolar ideal; and for the permanent we determine $I_2$ and $I_3$, and let $I^+=(I_2,I_3)$. 
\item In the case of the determinant we show that  $(I_2)_k$ is the full perpendicular space in $S_k$ to $S_{n-k}\circ \det (X)$. And in the case of permanent we show that $(I^+)_k$  is the full perpendicular space in $S_k$ to $S_{n-k}\circ \mathrm{perm}(X)$.
 \end{itemize} 
   
Of these steps, the last is the hardest and we use a triangularity method. For the determinant of a symmetric matrix we show that the acceptable monomials which are not the leading term of a Conca doset minor are the initial monomial of the generators of the ideal $(I_2)$ in the reverse lexicographic order (Proposition 3.8), and for the permanent of symmetric matrix we use a similar triangularity method (Proposition 3.19).

 \item We apply these results to give a lower bound for:
\begin{itemize}
\item Cactus rank of the determinant of a generic symmetric $n\times n$ matrix (Theorem 4.5).
\item Rank of the determinant of the generic symmetric $n\times n$ matrix (Proposition 4.8).
\item Cactus rank of the permanent of a generic symmetric $n\times n$ matrix (Theorem 4.6).
\end{itemize}

\item We give a Gr\"{o}bner basis for the apolar ideal of the determinant of a generic symmetric matrix (Theorem 3.12).

\item We give a representation-theoretical explanation of the degree three generators of the ideal $\mathrm{Ann}(\mathrm{perm}(X))$ in Lemma \ref{lem:rep explanation 6}.

\end{itemize}
\vspace{.2in}
\begin{ack} I am deeply grateful to my advisor Prof. A. Iarrobino whose help, stimulating ideas and encouragement helped me in working on this problem and writing this paper. I am very thankful to Prof. Z. Teitler for suggesting this problem and his helpful comments and also Prof. A. Conca and Prof. L. Smith for their valuable comments and suggestions. I also gratefully acknowledge support in summer 2012 from the Ling-Ma fellowship of the Department of Mathematics at Northeastern University.  \par\medskip
\end{ack}

\section{Doset basis for the space of $k\times k$ minors}
\label{section: Dimension}

We recall the definition of doset minors and the Gr\"{o}bner basis for the determinantal ideal of a generic symmetric matrix.

\vspace{.2in}
\begin{defi} \label{def:doset-Conca}(see \cite{CON}) Let $H$ be the set of all subsequences $(a_1,\ldots,a_t)$ of  $(1,\ldots,n)$, i.e., $a_1<a_2<\dots<a_t$. Let $a,b\in H$. We define on $H$ the partial order

\begin{equation*}
a=(a_1,\ldots,a_t) \leq b=(b_1,\ldots,b_r) \iff r\leq t, \text{ and } a_i\leq b_i \text{ for } i=1,\ldots,r.
\end{equation*}

We denote by $[a_1,\ldots,a_t| b_1,\ldots,b_t]$ the minor $\det(X_{a_ib_j}), 1\leq i,j, \leq t$ of $X$. Since $X$ is symmetric it is clear that $[a,b]=[b,a]$. A minor $[a_1,\ldots,a_t|b_1,\ldots,b_t]$ of $X$ with $a\le b$ in $H$ is called a doset minor.

\end{defi}
\vspace{.2in}

Let $\tau$  be a diagonal term order on $R^s=\sf k$$[x_{ij}]$: that is the initial term of every doset minor $[a_1,\ldots,a_s|b_1,\ldots,b_s]$ is $\prod_{i=1}^s x_{a_ib_i}$. For instance, we can consider the lexicographic order induced by the variable order
\begin{equation*}
x_{11}\geq x_{12} \geq \ldots \geq x_{1n}\geq x_{22} \geq \ldots \geq x_{2n} \geq \ldots \geq x_{n-1 n}\geq x_{nn}.
\end{equation*}

\begin{thm}\label{thm:doset-Conca}\textbf{(Conca)}[\cite{CON}, Theorem 2.9]
Let $M_t(X)$ be the ideal generated by the $t$-minors of $X$. The set of the doset t-minors is a Gr\"{o}bner basis for $M_t(X)$ with respect to $\tau$.
\end{thm}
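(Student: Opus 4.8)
The plan is to use the standard principle that a generating set of an ideal is a Gr\"obner basis exactly when its initial terms generate the whole initial ideal, and to verify this by a Hilbert-function count rather than by directly reducing $S$-polynomials (the latter being possible but notoriously delicate for determinantal ideals).

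\textbf{Step 1 (reduction).} Since $X$ is symmetric, $[a|b]=[b|a]$, so after permuting the two index sequences every $t$-minor of $X$ equals a doset $t$-minor; hence the doset $t$-minors already generate $I_t(X)$. By Buchberger's criterion it then suffices to prove that the monomials $\mathrm{in}_\tau([a_1,\dots,a_t|b_1,\dots,b_t])=\prod_{i=1}^{t}x_{a_ib_i}$, ranging over all $a\le b$ with $|a|=|b|=t$, generate $\mathrm{in}_\tau(I_t(X))$. Equivalently---because the monomials of $R^s$ divisible by none of these always span $R^s/I_t(X)$---it suffices to show these \emph{standard monomials} are linearly independent modulo $I_t(X)$, i.e. that in each degree $d$ their number equals $\dim_{\sf k}(R^s/I_t(X))_d$.

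\textbf{Step 2 (standard monomials, combinatorially).} Encode a monomial $m=\prod_k x_{i_kj_k}$ (with $i_k\le j_k$) by the symmetric multiset of matrix positions it records. Then $m$ is non-standard precisely when this data contains a \emph{doset $t$-chain}: indices $a_1<\cdots<a_t$ and $b_1<\cdots<b_t$ with $a_\ell\le b_\ell$ and $x_{a_\ell b_\ell}\mid m$ for all $\ell$. A Dilworth/K\"onig-type decomposition, or a Lindstr\"om--Gessel--Viennot lattice-path argument adapted to the doset, should identify the standard monomials as exactly the products of variables lying on a union of at most $t-1$ doset chains, hence as being in bijection with pairs of semistandard ``doset tableaux'' whose shape has fewer than $t$ columns.

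\textbf{Step 3 (straightening law and the count).} Invoke the symmetric straightening law for the doset of minors of $X$ (see \cite{CON} and the references therein): every product of doset minors rewrites, modulo terms that are strictly $\tau$-smaller than its leading standard monomial, as a ${\sf k}$-linear combination of standard monomials---products along doset chains. This exhibits a ${\sf k}$-basis of $R^s/I_t(X)$ consisting of precisely the standard monomials supported on shapes with fewer than $t$ columns, i.e. the very set counted in Step~2. Comparing the two counts degree by degree gives the required dimension equality, which completes the proof. (Alternatively one may bypass the direct count by quoting the known Hilbert series of symmetric determinantal rings.)

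\textbf{Main obstacle.} The heart of the matter is Step~2 together with making Step~3 genuinely compatible with the diagonal term order $\tau$: one must check that the straightening relations' trailing terms lie strictly below the leading standard monomial in $\tau$, and that the doset partial order is exactly what governs divisibility of a monomial by some $\mathrm{in}_\tau$ of a $t$-minor. In other words, one needs the statement that $\mathrm{in}_\tau$ turns $R^s$ into the discrete (Stanley--Reisner type) doset algebra; once this ``initial algebra is discrete'' fact is in hand, the Gr\"obner-basis assertion for every $I_t(X)$ follows formally.
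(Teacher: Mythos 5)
The first thing to note is that the paper does not prove this statement at all: it is quoted from Conca (\cite{CON}, Theorem 2.9) and used as an external input (e.g.\ in Lemma 2.5 and Proposition 3.8), so there is no proof in the paper to compare yours against; what you have written is a sketch of how such results are established in the literature via standard monomial theory for the doset algebra of a symmetric matrix. Judged on its own terms, the sketch has a genuine gap already in Step 1. Symmetry only gives $[a|b]=[b|a]$, i.e.\ you may exchange the two index sequences wholesale, and this does \emph{not} turn every $t$-minor into a doset minor: in a symmetric $4\times 4$ matrix the $2$-minor $[2,3\,|\,1,4]$ satisfies neither $a\le b$ nor $b\le a$, and neither does its transpose $[1,4\,|\,2,3]$. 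It becomes a combination of doset minors only through a straightening relation, e.g.\ $[2,3\,|\,1,4]=[1,3\,|\,2,4]-[1,2\,|\,3,4]$. So even the statement that the doset $t$-minors generate $I_t(X)$ is a nontrivial consequence of the straightening law, not of symmetry.

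More seriously, Steps 2 and 3 are exactly where the theorem lives, and they are asserted rather than proved: the identification of the standard monomials with products along doset chains is introduced with ``should identify,'' and the straightening law you invoke must be shown to be compatible with the diagonal order $\tau$ --- that is, every trailing term of a straightening relation must be strictly $\tau$-smaller than the leading standard monomial, equivalently the initial algebra must be the discrete doset algebra. As you yourself note in the ``main obstacle'' paragraph, once that compatibility is in hand the Gr\"obner assertion follows formally; but that compatibility is essentially equivalent to the theorem, so as written the argument is circular at its core. A complete proof along your lines would need the $\tau$-compatible straightening relations for symmetric minors together with the Hilbert function of $R^s/I_t(X)$ (or a symmetric Knuth--Robinson--Schensted argument as in \cite{CON}); short of supplying these, citing Conca --- which is what the paper does --- is the honest course.
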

\vspace{.2in}
\begin{defi}\label{def:young-tablaux}
A Young tableau of shape $(r_1,\ldots,r_u)$ is an array of positive integers $A=(a_{ij})$, with $1\leq i\leq u$, $1\leq j \leq r_i$, and $r_1 \geq \ldots \geq r_u$. Such a tableau is said to be semi-standard if  the numbers in each row is strictly increasing from left to right, and the numbers in each column are in non-decreasing order from top to bottom (that is $a_{i,j}< a_{i,j+1}$ for all $i=1,\ldots,u, j=1,\ldots,r_i-1$ and $a_{i,j} \leq a_{i+1,j}$ for all $i=1,\ldots,u-1, j=1,\ldots,r_{i+1})$.
\end{defi}
\vspace{.2in}
\begin{example} An example of a semi-standard Young tableau of shape $(4,3,2,2,1)$ filled with the numbers $\{1,2,3,4\}$ is 
\vspace{0.2in}

$$\young(1234,134,23,24,2)$$
\end{example}

\vspace{0.2in}
\begin{defi} \label{def:Lattice-paths}
A path composed of horizontal and vertical line segments in the $x$-$y$ plane from $(0,0)$ to $(n,n)$ with steps $(0,1)$ and $(1,0)$ is called a \emph{lattice path} of order $n$. A lattice path that never rises above the line $y=x$, is called a \emph{Dyck path} of order $n$. The \emph{corners} of the Dyck path are the points on the path where the direction of the path changes from horizontal to vertical or vice versa.
\end{defi}
\vspace{.2in}

The total number of Dyck paths of order $n$ is given by the Catalan number (\cite{ST}, Volume 2, page 221, Exercise 6.16 h)
\begin{equation*}
c_n=\frac{1}{n+1} {2n\choose n}.
\end{equation*}

\vspace{.2in}
\begin{lem}\label{lem:Narayana-symm-minors}
The dimension of the vector space spanned by the $t\times t$ minors of an $n\times n$ generic symmetric matrix is equal to the number of doset t-minors of the $n\times n$ symmetric matrix. This is equal to the number of semi-standard fillings of a Young tableau of shape $(t,t)$ with the numbers $\{1,...,n\}$, which is equal to the Narayana number $$N(n+1,t+1)={n+1\choose t+1}{n+1 \choose t}/(n+1).$$
\end{lem}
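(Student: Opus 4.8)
The plan is to establish the three asserted equalities in turn, the heart of the argument being the first, which is where Conca's theorem enters.

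\emph{Dimension $=$ number of doset $t$-minors.} Let $V\subseteq R^s_t$ be the $\sf k$-span of all $t\times t$ minors of $X$; since $I_t(X)$ is generated in degree $t$ by these minors, $V=(I_t(X))_t$. The doset $t$-minors lie in $V$, and they are linearly independent: under the diagonal term order $\tau$ the initial monomial of $[a_1,\dots,a_t\,|\,b_1,\dots,b_t]$ is $\prod_{i=1}^t x_{a_ib_i}$, and because $a_1<\dots<a_t$ are distinct this monomial is the graph of a function on $\{a_1,\dots,a_t\}$, from which both $(a_i)$ and $(b_i)$ are recovered by sorting; hence distinct doset minors have distinct initial monomials and are independent, so $\dim_{\sf k}V\ge\#\{\text{doset }t\text{-minors}\}$. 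For the reverse inequality I would pass to the initial ideal: $\dim_{\sf k}(I_t(X))_t=\dim_{\sf k}(\mathrm{in}_\tau I_t(X))_t$, since $R^s/I_t(X)$ and $R^s/\mathrm{in}_\tau I_t(X)$ have the same Hilbert function; and by Conca's theorem (Theorem~2.2) the doset $t$-minors are a Gr\"{o}bner basis of $I_t(X)$, so $\mathrm{in}_\tau I_t(X)$ is generated by the monomials $\prod_i x_{a_ib_i}$. As these all have degree $t$, the degree-$t$ component of $\mathrm{in}_\tau I_t(X)$ is precisely their span, of dimension $\#\{\text{doset }t\text{-minors}\}$. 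Combining the two bounds shows the doset $t$-minors form a basis of $V$.

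\emph{Doset $t$-minors $=$ shape-$(t,t)$ tableaux.} A doset $t$-minor amounts to a pair of strictly increasing sequences $a=(a_1<\dots<a_t)$ and $b=(b_1<\dots<b_t)$ in $\{1,\dots,n\}$ with $a_i\le b_i$ for every $i$. Associating to it the array of shape $(t,t)$ with first row $a$ and second row $b$ is visibly a bijection onto the semistandard tableaux of Definition~2.3 with entries in $\{1,\dots,n\}$: strict increase along each row is exactly the condition that $a$ and $b$ be subsequences of $(1,\dots,n)$, and weak increase down the two columns is exactly $a_i\le b_i$, i.e.\ $a\le b$ in $H$.

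\emph{The count is the Narayana number.} In the convention of Definition~2.3 the shape-$(t,t)$ tableaux are, after transposing, the ordinary semistandard tableaux (weakly increasing rows, strictly increasing columns) of the conjugate shape $(2,2,\dots,2)$ with $t$ rows, so the hook--content formula (\cite{ST}, Vol.~2) evaluates their number as $\dfrac{n!\,(n+1)!}{(n-t)!\,(n+1-t)!\,(t+1)!\,t!}$, and a routine rearrangement of factorials rewrites this as $\dfrac{1}{n+1}\binom{n+1}{t}\binom{n+1}{t+1}$. (Equivalently, one may count the pairs $(a,b)$ directly by the Lindstr\"{o}m--Gessel--Viennot lemma for non-intersecting lattice paths, obtaining the determinant $\binom{n}{t}^{2}-\binom{n}{t-1}\binom{n}{t+1}$, which is the familiar determinantal form of the same Narayana number.)

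The only non-combinatorial ingredient is Conca's Gr\"{o}bner basis theorem, which is already available; granting it, the remaining steps are a transparent bijection and an elementary identity, so I do not anticipate a genuine obstacle. The one place that rewards a little care is the verification that distinct doset $t$-minors have distinct $\tau$-initial monomials, since that is exactly what makes them linearly independent and hence, once the dimension count matches, a basis.
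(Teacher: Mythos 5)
Your proof is correct, and for the first equality it is in substance the paper's argument: the paper simply says ``the first statement is true by Conca's Theorem,'' while you supply the details it leaves implicit, namely that distinct doset $t$-minors have distinct $\tau$-initial monomials (hence are linearly independent), and that since they form a Gr\"obner basis, $\dim_{\sf k}(I_t(X))_t=\dim_{\sf k}(\mathrm{in}_\tau I_t(X))_t$ equals the number of those degree-$t$ monomial generators. Where you genuinely diverge is the enumeration step. The paper does not spell out the tableau bijection at all; it counts the doset $t$-minors by viewing them as lattice paths from $(0,0)$ to $(n+1,n+1)$ lying on or above the diagonal with exactly $t$ interior vertices, and then quotes Stanley (Exercise 6.36) for the Narayana count. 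You instead make the shape-$(t,t)$ tableau bijection explicit (which in fact proves the middle assertion of the lemma that the paper's proof skips over) and evaluate the count with the hook--content formula for the conjugate shape $(2^t)$, with the LGV determinant $\binom{n}{t}^2-\binom{n}{t-1}\binom{n}{t+1}$ as a parenthetical alternative, reducing everything to a factorial identity. The paper's route buys a one-line citation and a path-counting picture that matches its later Catalan-number argument (Corollary 2.6); yours is more self-contained and verifies all three stated equalities directly. Your hook lengths, the conjugation of conventions (the paper's rows-strict/columns-weak tableaux transpose to ordinary semistandard tableaux of shape $(2^t)$), and the final rearrangement of factorials all check out.
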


\begin{proof} The first statement is true by Conca's Theorem \ref{thm:doset-Conca}. To show the second statement one can view the count of the Conca doset $t$-minors as giving the coordinates in the $x$-$y$ plane of $t$ points. Then the count is of segmented paths with $t$ interior corners lying on or below the diagonal,   beginning at $(0,0)$ and ending at $(n+1,n+1)$. That is the number of Dyck $(n+1)$-paths with exactly $t$ vertices, which is given by the Narayana numbers as ${n+1\choose t+1}{n+1 \choose t}/(n+1)$. See \cite{ST}, Volume 2, page 237, Exercise 6.36 a, with a change of $+1$ to $(1,0)$ and $-1$ to $(0,1)$. 

\end{proof}

 \vspace{.2in}
\begin{lem}\label{lem:Scircledet-symm} Let $M_{n-k}(X)$ be the vector space spanned by the $(n-k)$-minors of the generic symmetric matrix $X$. Then
 \begin{equation}\label{eq::Scircledet-symm}
S^s_{k}\circ(\det(X))=M_{n-k}(X) \subset R^s.
 \end{equation}
 \end{lem}
 \begin{proof}
To show the inclusion
  \begin{equation*}
S^s_{k}\circ(\det(X))\subset M_{n-k}(X) \subset R^s,
 \end{equation*}
 
 we use induction on $k$. For $k=1$, the above inclusion is easy to see. Now assume that the above inclusion holds for $k-1$, i.e $S^s_{k-1}\circ \det(X) \subset M_{n-(k-1)}(X)$, and we want to show that it is true for $k$. We have $$S^s_k\circ \det(X)=S^s_1S^s_{k-1}\circ \det(X)\subset S^s_1\circ M_{n-k+1}(X)\subset M_{n-k}(X).$$
 
 Now we show  the opposite inclusion
 \begin{equation*}
S^s_{k}\circ(\det(X))\supset M_{n-k}(X) \subset R^s,
 \end{equation*}

Let $M_{\widehat I,\widehat J}(X), I=\{i_1,\ldots ,i_k\}, J=\{j_1,\ldots j_k\}, 1\le i_1< i_2 < \cdots < i_k\le n, 1\le j_1< j_2 < \cdots < j_k\le n$ be the $(n-k)\times (n-k)$ minor of $X$ one obtains by deleting the $I$ rows and $J$ columns of $X$. Let $\Delta=\Delta_{(I,J)}=I\cap J.$ Let $M_{(I,J)-\Delta}(Y)$ be the sub matrix of Y with the rows $I-\Delta$, and the columns $J-\Delta$. We claim

$$M_{\widehat{I},\widehat{J}}(X)=\pm c \bigg(\big( \prod_{i\in\Delta}y_{ii} \big) \det(M_{(I,J)-\Delta}(Y))\bigg)  \circ \det (X)$$ where $c\not=0 \in \sf{k}$. 

To prove this claim we use induction on $k=|I|=|J|$, the cardinality of the sets $I$ and $J$.  First we show the claim is true for $k=1$. Let $I=\{i_1\}$ and $J=\{j_1\}$. We have two cases

I. $i_1=j_1$ so $y_{i_1j_1}$ is a diagonal element and we have

\begin{equation*}
M_{\widehat{I},\widehat{J}}(X)=y_{i_1j_1}\circ (\det(X)).
\end{equation*}

II. $i_1\neq j_1$ so we have

\begin{equation*}
y_{i_1j_1}\circ (\det(X))=2M_{\widehat{I},\widehat{J}}(X).
\end{equation*}

So for $k=1$ the claim holds. Next assume that the claim holds for every $I$ and $J$ with $|I|=|J|=k-1$ and we want to show that the claim is also true for $I$ and $J$ with $|I|=|J|=k$. Let $I=\{i_1,\ldots,i_k\}$ and $J=\{j_1,\ldots,j_k\}$. Let $I'=I-\{i_1\}$ and $J'=J-\{j_1\}$. We have $|I'|=|J'|=k-1$ so by the induction assumption we have 

\begin{equation*}
M_{\widehat{I'},\widehat{J'}}(X)=\pm c\bigg( \big(\prod_{i\in\Delta_{(I',J')}}y_{ii} \big) \det(M_{(I',J')-\Delta}(Y))\bigg)  \circ \det (X)
\end{equation*}

By writing the block expansion of the determinant using row $i_1$ or column $j_1$ for $M_{\widehat{I},\widehat{J}}$, we get 

$$M_{\widehat{I},\widehat{J}}(X)=\pm c \bigg(\big( \prod_{i\in\Delta_{(I,J)}}y_{ii}\big) \det(M_{(I,J)-\Delta}(Y))\bigg)  \circ \det (X),$$ where $c\not=0 \in \sf{k}$. Hence $M_{\widehat{I},\widehat{J}}(X)\in S^s_{n-k} \circ (\det(X))$.

 \end{proof}

\begin{cor} \label{cor:degree-catalan}Let $X$ be a generic symmetric $n\times n$ matrix.  Then $$H_t(S^s/\mathrm {Ann}(\det(X))=N(n+1,t+1),$$ and $\deg(\mathrm {Ann}(\det(X))$ (Definition \ref{ann-length-degree-newdefi})  is  the Catalan number $C_{n+1}=\frac{1}{n+2}{2n+2 \choose n+1}$.
\end{cor}

\begin{proof}
Let $M_t$ be the space of $t \times t$ minors of a symmetric $n\times n$ matrix. Note that $N(n+1,n+1)=1$, and $H(S^s/\mathrm {Ann}(\det(X)))_0 = 1$, so by Lemma \ref{lem:Scircledet-symm} we have
\begin{equation*}
\deg(\mathrm {Ann}(\det(X))=\sum_{t=0}^{t=n} N(n+1,t+1)=C_{n+1}.
\end{equation*}
Thus the $\deg(\mathrm {Ann}(\det(X))$ will be the total number of Dyck paths, below or meeting the diagonal through the $(n+1)\times (n+1)$ grid, which is given by the Catalan number $C_{n+1}=\frac{1}{n+2}{2n+2 \choose n+1}$. (See \cite{ST}, Volume 2, page 237, Exercise 6.36 a)

\end{proof}

\begin{table}[h]
\begin{center}
\caption{\small{The Hilbert sequence of the Artin algebra $S^s/I$, where $I$ is the annihilator of the determinant of the generic symmetric matrix}}\label{table:determinant-symmetric-hilbert}

\begin{tabular}{l*{9}{c}r}

\hline
degree & 0 & 1 & 2 & 3 & 4 & 5 & 6 & 7 & 8  \\
\hline
n=2 & 1 & 3 & 1  \\
\hline
n=3           & 1 & 6 & 6 &  1 \\
\hline
n=4 & 1 & 10 & 20 & 10 & 1 \\
\hline
n=5 & 1 & 15 & 50 & 50 & 15 & 1 \\
\hline
n=6 & 1 & 21 & 105 & 175 & 105 & 21 & 1\\
\hline
n=7 & 1 & 28 & 196 & 490 & 490 & 196 & 28 & 1\\
\hline
n=8 & 1 & 36 & 336 & 1176 & 1764 & 1176 & 336 & 36 & 1\\

\end{tabular}
\end{center}
\end{table}


\section{Generators of the apolar ideal } 
\label{Section:symmetric-Generators of the apolar ideal } 

In section \ref{subsection:Apolar ideal of the determinant} we determine the generators of the apolar ideal of the determinant of the $n\times n$ generic symmetric matrix. In section \ref{subsection:Apolar ideal of the permanent-symm} we determine the generators of the apolar ideal of the permanent of the $n\times n$ generic symmetric matrix.

\vspace{0.2in}

\begin{Notation}(\cite{IKO})
Let $F_{2m}\subset \mathfrak{S}_{2m}$ be the set of all permutations $\sigma$ satisfying the following conditions:

(1) $\sigma(1)<\sigma(3)<\cdots<\sigma(2m-1)$

(2) $\sigma(2i-1)<\sigma(2i)$ for all $1\leq i \leq m$

We denote by $Hf(X)$ the hafnian of a generic symmetric $2n\times 2n$ matrix $X$, which is defined by

\begin{equation}\label{eq:hafnian-definition}
Hf(X)=\sum_{\sigma\in F_{2n}} x_{\sigma(1)\sigma(2)}x_{\sigma(3)\sigma(4)}\cdots x_{\sigma(2n-1)\sigma(2n)}
\end{equation}
\end{Notation}


\subsection{Apolar ideal of the determinant}
\label{subsection:Apolar ideal of the determinant}

In this subsection we determine the apolar ideal of the determinant of the $n\times n$ generic symmetric matrix, and we will show that it is generated by its degree two elements. We first determine the ideal in degree two (Proposition \ref{prop:gen-det-symm}). We then show that the generators in degree two generate the annihilator ideal in degree $n$ (Proposition \ref{prop:main-det-symm-n}). Then we show that the elements of degree two generate the ideal in each degree $k$ for $2\leq k\leq n$. A key step is to use triangularity to show that these degree two generators, generate all of the apolar ideal (Lemma \ref{lem:det-symm-unacceptable-traiangularity} and Proposition \ref{prop:triangularity-det}).  This leads to our main result (Theorem \ref{main-theorem-det-symmetric}).
\vspace{0.2in}
\begin{Notation}
Let $X$ be the generic symmetric $n\times n$ matrix. Recall that $Y=\xi(X)$ is the generic $n\times n$ symmetric matrix in the variables of $S^s$. The \emph{unacceptable monomials} of degree $k$ in $S^s_k$ are monomials which do not divide any term of the determinant of $Y$. We denote the set of degree $k$ unacceptable monomials by $U_k$. A monomial that divides some term of the determinant is called an \emph{acceptable} monomial.
\end{Notation}
\vspace{.2in}

\begin{prop}\label{prop:gen-det-symm}
For the $n\times n$ generic symmetric matrix $X=(x_{ij})$, and $Y=\xi(X)$, $\mathrm{Ann}(\det(X))\subset S^s$, includes the following degree 2 polynomials:

(a) The unacceptable monomials of the form $y_{ii}y_{ij}$ for all $1\leq i,j \leq n$ ($i=j$ is allowed). The number of these monomials is $n^2$.

(b) All the diagonal $2\times 2$ binomials of the form $y_{ij}^2+2y_{ii}y_{jj}$ ($i\neq j$). The number of these binomials is $n\choose 2$.

(c) All the $2\times 2$ permanents with one diagonal element, i.e. $y_{ji}y_{il}+y_{jl}y_{ii}$ ($i$, $j$, and $l$ are pairwise distinct). The number of these binomials is $n\cdot{{n-1}\choose 2}$.

(d) The hafnians of all symmetrically chosen $4\times 4$ submatrices of $Y=\xi(X)$. The number of these trinomials is $n\choose 4$.

\end{prop}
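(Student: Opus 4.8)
The plan is to verify directly that each polynomial in the four families lies in $\mathrm{Ann}(\det X)$, and then to read off the stated counts; nothing beyond the classical cofactor calculus is required. \emph{Setup.} I would pass to the generic \emph{non-symmetric} $n\times n$ matrix $Z=(z_{pq})$ and the substitution $\rho\colon z_{pq}\mapsto x_{\min(p,q)\,\max(p,q)}$, under which $\det Z$ specializes to $\det X$. By the chain rule, differentiation by $x_{ij}$ ($i\ne j$) corresponds, via $\rho$, to applying $\partial_{z_{ij}}+\partial_{z_{ji}}$ to $\det Z$, and $\partial_{x_{ii}}$ corresponds to $\partial_{z_{ii}}$. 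Hence any product $h$ of one or two differential variables can be evaluated on $\det X$ by expanding it into first- and second-order partials of $\det Z$ and then applying $\rho$. Writing $M_{ab,cd}$ for the minor obtained by deleting rows $a,b$ and columns $c,d$, the only facts I need are: $\partial^2_{z_{pq}}\det Z=0$; two partials in distinct entries of one fixed row (or column) of $Z$ kill $\det Z$; the classical signed second-cofactor identity
\begin{equation*}
\frac{\partial^2\det Z}{\partial z_{ac}\,\partial z_{bd}}=(-1)^{\,a+b+c+d-[b>a]-[d>c]}\,M_{ab,cd}\qquad(a\ne b,\ c\ne d),
\end{equation*}
where $[b>a]=1$ if $b>a$ and $0$ otherwise; and, since $X$ is symmetric, $M_{ab,cd}(X)=M_{cd,ab}(X)$ (a submatrix and its transpose).

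\emph{Part (a).} Here no cofactors are needed: $y^U\circ x^V$ is a nonzero scalar times $x^{V-U}$ when $x^U\mid x^V$ and is $0$ otherwise, so a monomial $y^U$ whose support divides no term of $\det X$ is automatically in $\mathrm{Ann}(\det X)$. A term $\prod_\ell x_{\ell\sigma(\ell)}$ of $\det X$ contains $x_{ii}$ precisely when $\sigma(i)=i$, and then to the first power only; thus $x_{ii}^2$ divides no term, and if $x_{ii}$ divides a term then $\sigma(i)=i$, so for $j\ne i$ neither $\sigma(i)=j$ nor (by injectivity of $\sigma$) $\sigma(j)=i$ can hold, i.e.\ $x_{ii}x_{ij}$ divides no term. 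Ranging over all ordered pairs $(i,j)$ gives the stated $n^2$ monomials: the $n$ squares $y_{ii}^2$ and the $n(n-1)$ products $y_{ii}y_{ij}$ with $i\ne j$, all distinct.

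\emph{Parts (b), (c), (d).} These are the arithmetic core, all handled by the same recipe. For (b), $y_{ij}^2$ ($i\ne j$) becomes $\partial^2_{z_{ij}}+2\,\partial_{z_{ij}}\partial_{z_{ji}}+\partial^2_{z_{ji}}$; the outer terms vanish on $\det Z$ and the middle one contributes $-2\,M_{ij,ij}(X)$ by the identity, while $y_{ii}y_{jj}$ becomes $\partial_{z_{ii}}\partial_{z_{jj}}$ and contributes $+M_{ij,ij}(X)$, so $(y_{ij}^2+2y_{ii}y_{jj})\circ\det X=0$; there are $\binom n2$ of these, and this is exactly where $\mathrm{char}\ne 2$ is used. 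For (c), the $2\times2$ permanent with one diagonal entry is, after an obvious relabeling of indices, $y_{ii}y_{jk}+y_{ij}y_{ik}$ with $i,j,k$ distinct (the permanent of the submatrix on rows $\{i,j\}$, columns $\{i,k\}$); expanding each product and discarding the same-row/same-column terms that die leaves in both products $\pm2\,M_{ij,ik}(X)$, and the two surviving sign exponents differ by $[j>i]-[i>j]$, which is odd, so the contributions cancel; there are $n\binom{n-1}{2}$ of these. For (d), the $4\times4$ Hafnian $y_{ab}y_{cd}+y_{ac}y_{bd}+y_{ad}y_{bc}$ with $a,b,c,d$ distinct translates into twelve second-cofactor terms; grouping them by the pair (deleted rows, deleted columns) partitions the twelve into six pairs, and within each pair the two sign exponents differ by some $[u>v]-[v>u]$ with $u\ne v$, hence are of opposite parity, so every pair cancels and $h\circ\det X=0$. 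This produces the $\binom n4$ trinomials and completes the argument.

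\emph{The main obstacle} is the sign bookkeeping in (c) and especially (d): one must check carefully that, after using the symmetry of $X$ to identify each deleted submatrix with its transpose, the signed second cofactors genuinely cancel in pairs; everything else is formal.
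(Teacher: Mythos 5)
Your proposal is correct, but it proves the proposition by a genuinely different route than the paper. The paper verifies (b) and (c) by pairing permutations inside the expansion of $\det(X)$: for each $\sigma_1$ whose term contains $x_{ii}x_{jj}$ it considers $\sigma_2=\tau\sigma_1$ with $\tau=(i,j)$, whose term carries $x_{ij}^2$ with the opposite sign, and checks that the two contributions under $y_{ij}^2+2y_{ii}y_{jj}$ (resp.\ the permanent with one diagonal entry) cancel; for (d) it checks the $4\times 4$ case directly and then reduces the general case by Laplace expansion, using that the three monomials of a given Hafnian occur in no other $4\times 4$ block. You instead pull everything back to the generic non-symmetric matrix $Z$ via the substitution $z_{pq}\mapsto x_{\min(p,q)\max(p,q)}$ and the chain rule, and then invoke the signed second-cofactor identity, so that each of (b), (c), (d) becomes a pairwise cancellation of signed complementary minors: e.g.\ $y_{ij}^2\circ\det X=-2M_{ij,ij}(X)$ against $y_{ii}y_{jj}\circ\det X=M_{ij,ij}(X)$, and for a Hafnian the twelve cofactor terms group into six pairs with equal deleted rows and columns and opposite signs. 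Your treatment is uniform across the three binomial/trinomial families, avoids the $n=4$ base check and the Laplace reduction, and produces explicit formulas of the type the paper needs again in Lemma 3.3; the paper's argument is more elementary in that it needs no sign conventions for second cofactors, only the transposition pairing of permutations. Two small remarks: your parenthetical that (b) is ``exactly where $\mathrm{char}\neq 2$ is used'' is inessential (in characteristic $2$ the binomial reduces to $y_{ij}^2$, which still annihilates since $-2M_{ij,ij}=0$; the standing hypothesis $\mathrm{char}\neq 2$ matters elsewhere), and in (d) the identification of a deleted submatrix with its transpose is not actually needed, since within each of your six pairs the two terms delete the same rows and the same columns --- the transpose identification is only needed in (c), exactly as you use it there. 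Neither point is a gap.
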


\begin{proof}
We have $\det(X)=\sum _ {\sigma \in S_n} Sgn(\sigma) \Pi x_{i,\sigma(i)}$. First we show that monomials of type (a) are in $\mathrm {Ann}(\det(X))$. By symmetry we have

$$y_{ii}y_{ij}\circ \det(X)=0 \text{ (where } j\geq i),$$
$$ y_{ii}y_{ji}\circ\det(X)=0 \text{ (where } j\leq i).$$

Next we want to show that binomials of type (b) are in $\mathrm {Ann}(\det(X))$.

Let $P=2y_{ii}y_{jj}+y_{ij}^2$.There are $n!$ terms in the expansion of the determinant. If a  term doesn't contain the monomial $x_{ii}x_{jj}$ or the monomial $x_{ij}^2$ then the result of the action of $P$ on it will be zero. 
Let $\sigma_{1}$ be a permutation having $x_{ii}$ and $ x_{jj}$ respectively in its $i$-th and $j$-th place. Corresponding to $\sigma_{1}$ we also have a permutation $\sigma_{2}=\tau \sigma_{1}$, 
 where $\tau=(i,j)$ is a transposition and $sgn (\sigma_{2})=sgn (\tau\sigma_{1})=-sgn( \sigma_{1})$. Thus, corresponding to each positive term in the determinant which contains the monomial $x_{ii} x_{jj}$ or the monomial $x_{ij}^2$ we have the same term with the negative sign, thus the resulting action of the binomial $P$ on $\det (X)$ is zero.
 
To show that the binomials of type (c) are in the annihilator ideal we can use the same proof as we used for the binomials of type (b).

Next we want to show that any $4\times 4$ hafnian of $Y$ annihilates the determinant of an $n\times n$ symmetric matrix $X$. This is easy to check for $n=4$.
So let $n\ge 4$. Let $W$ be a $4\times 4$ submatrix of $Y$, involving the rows and the columns $i_1,i_2,i_3$ and $i_4$.

\begin{center}  $ W= \left(%
\begin{array}{cccc}
 
  y_{i_1i_1} & y_{i_1i_2} & y_{i_1i_3} & y_{i_1i_4}\\
  y_{i_2i_1} & y_{i_2i_2} & y_{i_2i_3} & y_{i_2i_4}\\
  y_{i_3i_1} & y_{i_3i_2} & y_{i_3i_3} & y_{i_3i_4}\\
  y_{i_4i_1} & y_{i_4i_2} & y_{i_4i_3} & y_{i_4i_4}\\
  \end{array}%
\right)$

\end{center}

By Equation \ref{eq:hafnian-definition} the hafnian of $W$ is 

$$H= \mathrm{Hf}(W)=y_{i_1i_2}y_{i_3i_4}+ y_{i_1i_3}y_{i_2i_4}+ y_{i_1i_4}y_{i_2i_3}.$$

If a term in the determinant does not contain the monomials $x_{i_1i_2}x_{i_3i_4}$ or  $x_{i_1i_3}x_{i_2i_4}$ or  $x_{i_1i_4}x_{i_2i_3}$, then $H$ annihilates it. If a term in the determinant contains one of the monomials  $x_{i_1i_2}x_{i_3i_4}$ or  $x_{i_1i_3}x_{i_2i_4}$ or  $x_{i_1i_4}x_{i_2i_3}$, then since these monomials do not appear in any other $4\times 4$ sub matrix of $X$, we can use the block expansion (cofactor expansion) of the determinant with the rows $i_1\dots i_4$,   and the proof is complete.

\end{proof}
\vspace{.2in}

We denote by $Q$ be set of the degree two elements of type (a), (b), (c) and (d) in Proposition~\ref{prop:gen-det-symm}, and by $\langle Q\rangle $ the vector subspace of $S^s$ spanned by $Q$. We denote by $Q_a$, $Q_b$,  $Q_c$ and  $Q_d$ the set of elements in $(a)$, $(b)$, $(c)$ and $(d)$ respectively.
\vspace{.2in}
\begin{lem}\label{lem:symm-det-dimV}
The set $Q$ is linearly independent and we have,
  \begin{equation*}
 \dim \langle Q\rangle=n^2+{n\choose 2}+n\cdot{{n-1}\choose 2}+{n\choose 4}.
 \end{equation*}
 
 \end{lem}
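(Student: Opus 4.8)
The plan is to prove linear independence by exhibiting, for the elements in each of the four families, monomials that appear in that family but in no other, so that the four blocks cannot interfere with one another, and then arguing linear independence inside each block separately. First I would record the explicit monomial support of each type: the type (a) elements are monomials $y_{ii}y_{ij}$, hence each is its own leading term; the type (b) binomials $y_{ij}^2 + 2y_{ii}y_{jj}$ involve $y_{ij}^2$ and $y_{ii}y_{jj}$; the type (c) binomials $y_{jk}y_{il}+y_{jl}y_{ii}$ (with $i\notin\{j,k,l\}$, say, in the precise indexing of Proposition 3.1(c)) involve one monomial with an off-diagonal pair $y_{jk}y_{il}$ and one monomial $y_{jl}y_{ii}$ containing exactly one diagonal variable; and the type (d) Hafnian trinomials $y_{i_1i_2}y_{i_3i_4}+y_{i_1i_3}y_{i_2i_4}+y_{i_1i_4}y_{i_2i_3}$ involve only products of two distinct off-diagonal variables on four distinct indices.

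Next I would separate the blocks. The type (a) monomials are the only ones of the form (diagonal)$\cdot$(something sharing that row index with a strictly larger or equal column index that is not itself purely diagonal-on-two-indices) — more simply, they are the only elements of $\{V\}$ that are single monomials, so any linear dependence among $\{V\}$ must, after subtracting off the $\{a\}$-part, become a dependence among $\{b\}\cup\{c\}\cup\{d\}$ using only non-type-(a) monomials; since each element of $\{b\},\{c\},\{d\}$ has at least one monomial that is not of type (a) (namely $y_{ij}^2$, $y_{jk}y_{il}$, and each of the three Hafnian monomials, none of which has the shape $y_{pp}y_{pq}$), I can test against those. Then, to peel off $\{d\}$: a monomial appearing in a type-(d) trinomial is a product of two off-diagonal variables whose four indices are all distinct; such a monomial never appears in a type (c) binomial (where the monomial $y_{jl}y_{ii}$ has a diagonal factor, and $y_{jk}y_{il}$ has only three distinct indices $\{i,j,k,l\}$ with $i$ repeated — wait, here one must check the exact index conventions, but the point is the off-diagonal monomial in type (c) uses at most three distinct row/column indices) nor in a type (b) binomial, so the coefficients on the $\{d\}$-elements vanish because distinct $4$-subsets $\{i_1,i_2,i_3,i_4\}$ give disjoint monomial supports and within a fixed $4$-subset the three monomials of the single Hafnian are distinct. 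After that the argument collapses to $\{b\}\cup\{c\}$: the monomials $y_{ij}^2$ (type b) are distinct from all monomials occurring in type (c), and the diagonal-containing monomials $y_{ii}y_{jj}$ and $y_{jl}y_{ii}$ have different index patterns (two diagonal factors vs. one), so both families are independent, finishing the proof; the dimension count is then the sum of the four cardinalities stated in Proposition 3.1, which gives $n^2+\binom{n}{2}+n\binom{n-1}{2}+\binom{n}{4}$.

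The main obstacle I anticipate is purely bookkeeping: pinning down the precise index ranges in Proposition 3.1 (in particular which indices are allowed to coincide in types (a) and (c), and how symmetry $y_{ij}=y_{ji}$ is being used to normalize $i\le j$) so that the "disjoint support" claims are literally correct rather than morally correct, and making sure no type-(a) monomial is secretly equal to a monomial appearing in a type (b) or (c) element (e.g. confirming that $y_{ij}^2$ is never of the form $y_{pp}y_{pq}$, and that the mixed monomial in type (c) is not). Once the index conventions are fixed this is routine; the structure of the argument — block-diagonalize by monomial support, then do each block — is what I would write out.
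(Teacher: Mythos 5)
Your proposal is correct and follows essentially the same route as the paper: separate the four families by showing their monomial supports do not interfere, then check linear independence within each family ((a) distinct monomials, (b) and (c) each element identified by its unique monomial pair with a diagonal entry, (d) disjoint supports for distinct $4$-subsets), with the dimension then given by the sum of the four cardinalities from Proposition 3.1. Your version is in fact a little more careful than the paper's one-line justification that the subsets ``involve different variables,'' since what is really used is disjointness of monomial supports (e.g.\ $y_{ii}$ occurs in elements of types (a), (b) and (c)), and the verifications you flag as bookkeeping (that no monomial of type (b), (c) or (d) has the shape $y_{pp}y_{pq}$, and that the off-diagonal monomial in a type (c) permanent uses only three distinct indices) all check out under the paper's indexing.
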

 
 \begin{proof}
Each of the four subsets is linearly independent from the span of the other three, since they involve different monomials. So it suffices to show that each subset is linearly independent. The subset $Q_a$ is linearly independent since the monomials in $Q_a$ form a Gr\"{o}bner basis for the ideal they generate. The subsets $Q_b$ and $Q_c$ are linearly independent since by choosing two elements of the matrix, where at least one element is diagonal, we have a unique $2\times 2$ minor. The subset $Q_d$ is linearly independent since the monomials that appear in a hafnian of a $4\times 4$ symmetric submatrix of $Y$, do not appear in the hafnian of any other $4\times 4$ symmetric submatrix of $Y$. Hence the set $Q$ is linearly independent and the dimension of the vector space $\langle Q\rangle $ is $n^2+{n\choose 2}+n\cdot{{n-1}\choose 2}+{n\choose 4}$.

 \end{proof}

\vspace{.2in}

\begin{lem} \label{lem:det-symm-V-degree2}For the generic symmetric $n\times n$ matrix $X$, we have
\begin{equation*}
\langle Q\rangle=\mathrm{Ann}(M_2(X))\cap S^s_2=(\mathrm{Ann}(\det X))_2
\end{equation*}

\end{lem}

\begin{proof}
By the Lemma \ref{lem:Scircledet-symm} we have 

\begin{equation*}
\mathrm{Ann}(S^s_{n-2}\circ(\det(X)))= \mathrm{Ann}(M_2(X)).
\end{equation*}

By the Proposition \ref{prop:gen-det-symm} we have

\begin{equation*}
(\mathrm{Ann}(\det(X)))_2\supset \langle Q\rangle
\end{equation*}

By the Remark \ref{remark:introIK} we have

\begin{equation*}
(\mathrm{Ann}(\det(X)))_2=(\mathrm{Ann}(S^s_{n-2}\circ(\det(X))))_2=\mathrm{Ann}(M_2(X))\cap S^s_2.
\end{equation*}

Hence we have

\begin{equation*}
\langle Q\rangle \subset \mathrm{Ann}(M_2(X)).
\end{equation*}

On the other hand, using Lemma \ref{lem:symm-det-dimV} we have 

\begin{equation*}
\dim\langle Q\rangle=n^2+{n\choose 2}+n\cdot{{n-1}\choose 2}+{n\choose 4}={{\frac{n^2+n}{2}}+1\choose 2}-{\frac{{n+1\choose 2}{n+1\choose 3}}{n+1}}=\dim S^s_2-\dim M_2(X).
\end{equation*}

So we have
\begin{equation*}
\langle Q\rangle=\mathrm{Ann}(M_2)\cap S^s_2.
\end{equation*}

\end{proof}
\vspace{.2in}

\begin{lem}\label{lem:easydirectionk-det-dymm} Let $(Q)$ be the ideal generated by the vector space $\langle Q\rangle $. For $2\leq k\leq n$ we have
\begin{equation}
(Q)_k \subset \mathrm {Ann}(M_k(X))  \cap S^s_k.
\end{equation}

\end{lem}

\begin{proof} By Remark \ref{remark:introIK} and Lemma \ref{lem:Scircledet-symm} we have

$$Q \circ \det (X)=0  \Longleftrightarrow Q \circ (S^s_{n-2}\circ \det(X))=0 \Longleftrightarrow  Q \circ M_2(X)=0.$$

By Remark  \ref{remark:introIK} and Lemma \ref{lem:det-symm-V-degree2} we have 
$$\mathrm {Ann}(\det(X))) \cap S^s_2= Q.$$
Hence by Lemma \ref{lem:Scircledet-symm} we have

$$S^s_{k-2} (Q)\circ (S^s_{n-k} \circ \det (X))=S^s_{k-2}(Q) \circ M_k(X)=(Q)_k \circ M_k(X)=0.$$

\end{proof}

\vspace{.2in}
\begin{prop} \label{prop:main-det-symm-n}For $n\geq 2$ we have
\begin{equation}\label{eq:main-det-symm-n}
(Q)_n=\mathrm{Ann}(\det(X))\cap S^s_n.
\end{equation}

\end{prop}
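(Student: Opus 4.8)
The goal is to show $(V)_n = \mathrm{Ann}(\det(X)) \cap S^s_n$, i.e.\ the degree-two generators from Proposition 3.1 already cut out the entire apolar ideal in the top degree $n$. The plan is to pair this against the known dimension count. By Corollary 2.6 (via Lemma 2.5 and Conca's Theorem) we know $\dim (S^s_n \circ \det(X)) = \dim M_0(X)$ — actually we need the full Hilbert function: $\dim (\mathrm{Ann}(\det X))_n = \dim S^s_n - \dim M_0(X) = \dim S^s_n - 1$, since $S^s_n \circ \det(X) = \langle \det(X)\rangle$ is one-dimensional in $R^s_0 \cong \sf k$. Wait — more precisely $S^s_n \circ \det(X) \subseteq R^s_0$, and it is nonzero, hence one-dimensional; so $\dim (\mathrm{Ann}(\det X))_n = \dim S^s_n - 1$. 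By Lemma 3.4 and the already-established $V = (\mathrm{Ann}(\det X))_2$, together with Lemma 3.5 giving $(V)_n \subseteq \mathrm{Ann}(M_n(X)) = \mathrm{Ann}(\det X) \cap S^s_n$ (here $M_n(X) = \{\det X\}$), we have the inclusion $(V)_n \subseteq \mathrm{Ann}(\det X) \cap S^s_n$ for free. So the whole problem reduces to the reverse inclusion, or equivalently to the dimension estimate $\dim (V)_n \geq \dim S^s_n - 1$, i.e.\ $\dim S^s_n / (V)_n \leq 1$.

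First I would describe $S^s_n / (V)_n$ concretely via a monomial spanning set. The quotient $S^s_n / (V)_n$ is spanned by the images of the acceptable degree-$n$ monomials: any monomial divisible by some $y_{ii}y_{ij}$ (type (a)) dies, so only monomials dividing some term of $\det(X)$ survive. An acceptable degree-$n$ monomial of $S^s$ is exactly $\hat v$ for $v$ a squarefree-in-the-appropriate-sense product $x_{1\sigma(1)}\cdots x_{n\sigma(n)}$ corresponding to a permutation $\sigma$ (a permutation monomial), or a monomial obtained from one of these by grouping — but since each variable $y_{ij}$ with $i \neq j$ can only appear from at most two positions $(i,j)$ and $(j,i)$, and with multiplicity, the acceptable monomials are in bijection with certain edge-multiset data on $\{1,\dots,n\}$. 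I would then use the type (b) and type (c) relations — $y_{ij}^2 \equiv -2 y_{ii}y_{jj}$ and $y_{jk}y_{il} \equiv -y_{jl}y_{ik}$ modulo the diagonal-killing — together with type (a) and type (d), to rewrite every acceptable permutation monomial modulo $(V)_n$ into a single canonical one, with a sign. Concretely: using (c), transpositions in $\sigma$ can be resolved pairwise, showing $\widehat{\prod x_{i\sigma(i)}} \equiv \mathrm{sgn}(\sigma)\, \widehat{\prod x_{ii}} \pmod{(V)_n}$ for every $\sigma$; this is exactly the computation already done inside the proof of Proposition 3.1 for type (b) and (c), run in reverse. The type (d) Hafnian relations handle the monomials that are not of permutation-monomial type but are still acceptable (those where a variable $y_{ij}$ appears with multiplicity two but not paired with $y_{ii}y_{jj}$) — or one checks directly these are not acceptable. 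The upshot is that $S^s_n/(V)_n$ is spanned by the single class $[\widehat{\prod_i x_{ii}}]$, so it is at most one-dimensional.

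The key step — and the main obstacle — is verifying that this rewriting is \emph{consistent}: that no chain of type (b), (c), (d), (a) reductions forces $\widehat{\prod_i x_{ii}} \equiv 0$, which would collapse the quotient to zero and contradict the lower bound. Equivalently, I must exhibit that $\dim (V)_n$ is not too large, i.e.\ that $(V)_n \neq S^s_n$. But this is guaranteed by the already-proven inclusion $(V)_n \subseteq \mathrm{Ann}(\det X)\cap S^s_n$ from Lemma 3.5 combined with $\det(X) \neq 0$: since $S^s_n \circ \det(X) \neq 0$, we have $(V)_n \subsetneq S^s_n$, hence $\dim S^s_n/(V)_n \geq 1$. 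Combined with the $\leq 1$ from the spanning argument, we get equality $\dim (V)_n = \dim S^s_n - 1 = \dim(\mathrm{Ann}(\det X)\cap S^s_n)$, and since one contains the other, they are equal. So the real content is the monomial-rewriting lemma of the previous paragraph, and I expect the bookkeeping there — carefully matching acceptable monomials to permutations and handling the repeated-variable/Hafnian cases — to be the delicate part; the dimension endgame is then immediate.
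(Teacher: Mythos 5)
Your strategy is genuinely different from the paper's, and its core is viable. The paper proves this proposition by induction on $n$: it invokes Remark~1.3 to reduce to binomials $\alpha_\sigma\hat v_\eta-\alpha_\eta\hat v_\sigma$ in the top-degree annihilator, verifies $n=4$ by an explicit case analysis on how many variables the two terms share, and handles $n>4$ by factoring out a common variable (or manufacturing one by adding and subtracting a suitable term of the determinant) and applying the inductive hypothesis. You instead argue by codimension: $(V)_n\subseteq(\mathrm{Ann}(\det X))_n$, the latter is a hyperplane in $S^s_n$ because $h\mapsto h\circ\det X$ is a nonzero linear functional, and you bound $\dim S^s_n/(V)_n\le 1$ by rewriting every degree-$n$ monomial modulo $(V)_n$ into a scalar multiple of the diagonal monomial. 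The cycle reduction you sketch does work: a type (c) permanent gives $y_{i_1i_2}y_{i_2i_3}\equiv-y_{i_2i_2}y_{i_1i_3}$, which shortens any cycle of length at least three, and type (b) disposes of the $2$-cycles; your worry about consistency of the rewriting is correctly dismissed by the lower bound coming from the containment in the annihilator. (The constants are $\pm2^{\text{something}}$ rather than $\mathrm{sgn}(\sigma)$, which is harmless since $\mathrm{char}\,\mathsf{k}\ne 2$.) This route avoids the paper's delicate induction and essentially absorbs, in degree $n$, what the paper later does in Lemma~3.7 and Proposition~3.8.

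There is, however, one concrete gap in the spanning step: you assert that after killing multiples of the type (a) monomials $y_{ii}y_{ij}$, only acceptable monomials (images of terms of $\det X$) survive. That is not true. For $n=4$ the monomial $y_{12}y_{13}y_{14}y_{23}$ contains no diagonal variable, hence is divisible by no type (a) generator, yet it is unacceptable, since the index $1$ occurs three times. Monomials with three or more off-diagonal entries sharing a row/column index must be put into $(V)_n$ by a separate argument, for instance the type (c) relation $y_{13}y_{14}\equiv-y_{11}y_{34}$ turns the example into $-y_{11}y_{12}y_{34}y_{23}$, which contains the type (a) monomial $y_{11}y_{12}$; this is exactly the content of the paper's Lemma~3.7, whose proof is independent of the present proposition and could be imported or reproved here. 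With that step supplied, your proof is complete.
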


\begin{proof}

One inclusion is given by Lemma \ref{lem:easydirectionk-det-dymm}.  To show that the other inclusion holds we use induction on $n$. For $n=2,3$ the equality is easy to see. We next show the equality (\ref{eq:main-det-symm-n}) for $n=4$. Here

\begin{center}  $X=(x_{ij})= \left(%
\begin{array}{cccc}
 
  a & b & c & d\\
  b & e & f & g\\
  c & f & h & i\\
  d& g & i & j\\
  \end{array}%
\right)$,

\end{center}

\begin{center}  $Y=(y_{ij})= \left(%
\begin{array}{cccc}
 
  A & B & C & D\\
  B & E & F & G\\
  C & F & H & I\\
  D & G & I & J\\
  \end{array}%
\right)$.

\end{center}

We have:

$$\det(X)=d^2f^2-2cdfg+c^2g^2-d^2eh+2bdgh-ag^2h+2cdei-2bdfi-$$$$2bcgi+2afgi+b^2i^2-aei^2-c^2ej+2bcfj-af^2j-b^2hj+aehj \in R^s_4.$$

If we denote the determinant in the divided power ring by $\det(X)_{Div}$ we have:

$$\det(X)_{Div}=4d^2f^2-2cdfg+4c^2g^2-2d^2eh+2bdgh-2ag^2h+2cdei-2bdfi-$$$$2bcgi+2afgi+4b^2i^2-2aei^2-2c^2ej+2bcfj-2af^2j-2b^2hj+aehj\in \mathcal D_4.$$

We use the divided powers and the contraction in the following proof. Using the Remark~\ref{remark:main-ann}, we let $\psi$ be a binomial in $\mathrm{Ann}(\det(X))\cap S^s_4$. 

\begin{equation*}
\psi=\alpha_{\sigma}(-1)^{\mathrm{sgn}{(\eta)}}y_{1\eta(1)}y_{2\eta(2)}y_{3\eta(3)}y_{4\eta(4)}-\alpha_{\eta}(-1)^{\mathrm{sgn}(\sigma)}y_{1\sigma(1)}y_{2\sigma(2)}y_{3\sigma(3)}y_{4\sigma(4)},
\end{equation*}

where $\sigma\neq \eta$ are two permutations of the set $\{1,2,3,4\}$, $\alpha_{\eta}$ is the coefficient of the monomial $y_{1\eta(1)}y_{2\eta(2)}y_{3\eta(3)}y_{4\eta(4)}$ in $\det(X)_{Div}$ and $\alpha_{\sigma}$ is the coefficient of  $y_{1\sigma(1)}y_{2\sigma(2)}y_{3\sigma(3)}y_{4\sigma(4)}$ in $\det(X)_{Div}$. The terms  $y_{1\eta(1)}y_{2\eta(2)}y_{3\eta(3)}y_{4\eta(4)}$ and $y_{1\sigma(1)}y_{2\sigma(2)}y_{3\sigma(3)}y_{4\sigma(4)}$ cannot have 3 common factors, since if they have 3 variables in common the fourth variable is forced and it contradicts our assumption $\sigma\neq \eta$ . Without loss of generality we can assume $\eta=id$. We have three different possibilities. 

(i) $y_{11}y_{22}y_{33}y_{44}$ and $y_{1\sigma(1)}y_{2\sigma(2)}y_{3\sigma(3)}y_{4\sigma(4)}$ have two common factors. Without loss of generality we can assume that $\sigma(1)=1$ and $\sigma(2)=2$. So we have  

\begin{equation*}
\psi=2AEHJ-(-1)^{sgn(\sigma)} AEy_{3\sigma(3)}y_{4\sigma(4)}=2AEHJ+AEI^2=AE(2HJ+I^2)\in (Q)_4
\end{equation*}

(ii) $y_{11}y_{22}y_{33}y_{44}$ and $y_{1\sigma(1)}y_{2\sigma(2)}y_{3\sigma(3)}y_{4\sigma(4)}$ have only one common factor. Without loss of generality we can assume that $\sigma(1)=1$, Since the only term in the determinant which has $a$ and does not have $e,h$ and $j$ is $2afgi$ we have

$$\psi=2AEHJ-AFGI=2AEHJ-AFGI+AF^2J-AF^2J=$$$$AJ(2EH+F^2)-AF(GI+FJ)\in (Q)_4,$$

\vspace{0.2in}

since we know that $2EH+F^2\in Q$ and $GI+FJ\in Q$.

(iii) $y_{11}y_{22}y_{33}y_{44}$ and $y_{1\sigma(1)}y_{2\sigma(2)}y_{3\sigma(3)}y_{4\sigma(4)}$ do not have  any common factor. We add and subtract a term which has a common factor with $y_{11}y_{22}y_{33}y_{44}$ and a common factor with $y_{1\sigma(1)}y_{2\sigma(2)}y_{3\sigma(3)}y_{4\sigma(4)}$. The reason that such a term exists in the determinant is that if we choose two elements, $\alpha$ and $\beta$ not in the same row or column, it is easy to see that we always have a term in the determinant containing $\alpha\beta$. On the other hand if we choose one variable from  $y_{11}y_{22}y_{33}y_{44}$, say $y_{11}$, there is always one variable in $y_{1\sigma(1)}y_{2\sigma(2)}y_{3\sigma(3)}y_{4\sigma(4)}$ which is not in the first row or column, since we only have three elements other than $y_{11}$ in the first row and column. So we can always choose a term in the determinant with at least one common factor with $y_{11}y_{22}y_{33}y_{44}$ and at least one common factor with $y_{1\sigma(1)}y_{2\sigma(2)}y_{3\sigma(3)}y_{4\sigma(4)}$. Then using the cases (i) or (ii) we have
\vspace{0.2in}

$$\psi=AEHJ-AFGI=2AEHJ-AFGI+AF^2J-AF^2J=$$$$AJ(2EH+F^2)-AF(GI+FJ)\in (Q)_4.$$

This completes the proof of the Equation \ref{eq:main-det-symm-n} for $n=4$. 

\vspace{0.2in}

Now let $n\ge 5$. By the induction assumption Equation \ref{eq:main-det-symm-n} holds for all integers $2\leq k\leq n-1$. Again we use the Remark \ref{remark:main-ann}. Let $\beta=\beta_1+\beta_2\in \mathrm{Ann}(\det(X))\cap S^s_4$. If the two terms $\beta_1$ and $\beta_2$ have a common factor $l$, i.e. $\beta_1=la_1$ and $\beta_2=la_2$, then $\beta=l(a_1+a_2)$ where $a_1$ and $a_2$ are of degree at most $n-1$. By  the induction assumption the proposition holds for the binomial $a_1+a_2$, i.e. $a_1+a_2 \in (Q)_{n-1}$  hence we have 
\begin{equation*}
\beta=l(a_1+a_2)\in l(Q)_{n-1}\subset (Q)_n.
\end{equation*}
If the two terms, $\beta_1$ and $\beta_2$ do not have any common factor then with the same method as  we used in (iii), we can rewrite the binomial $\beta$ by adding and subtracting a term $m$ of degree $n$, which has a common factor $m_1$ with $\beta_1$ and a common factor $m_2$ with $\beta_2$, and we will have

\begin{equation*}
\beta_1+\beta_2= \beta_1+m+\beta_2-m=m_1(c_1+m')+m_2(c_2-m''),
\end{equation*}
where $\beta_1=m_1c_1$, $m=m_1m'=m_2m''$ and $\beta_2=m_2c_2$. Since $c_1+m'$ and $c_2-m''$ are of degree at most $n-1$, by the induction assumption we have
\begin{equation*}
\beta_1+\beta_2=m_1(c_1+m')+m_2(c_2-m'')\in(Q)_n.
\end{equation*}
This completes the induction step and the proof of the proposition.

\end{proof}

\vspace{.2in} 

Recall that for the generic symmetric $n\times n$ matrix $X$, the unacceptable monomials of degree $k$ in $S^s_k$ are the monomials which do not divide any term of the determinant of $Y=\xi(X)$, and recall that we denote the set of degree $k$ unacceptable monomials by $U_k$.

\vspace{.2in}

\begin{lem}\label{lem:det-symm-unacceptable-traiangularity}
We can write each unacceptable monomial of $S^s_k$ ($2\leq k\leq n$), as an explicit element of the product $S^s_{k-2}\cdot \langle Q\rangle $, where $\langle Q\rangle \subset S^s_2$ is the space defined in the Proposition \ref{prop:gen-det-symm}.

\end{lem}

\begin{proof}
We use induction on $k$. For $k=2$ the claim is obviously true.  We show that the claim is true for $k=3$. We need to show that the space $U_3$ of unacceptable monomials in $S^s_3$ are in $S^s_1\langle Q\rangle $. The unacceptable monomials of degree 3 for the $n\times n$ generic symmetric matrix have one of the following forms:

(a) Unacceptable of the form $x^2y$ where $x$ is a diagonal element.

(b) Unacceptable of the form $xyz$ where $x$ is a diagonal element, $y\neq x$ is in the same row or column with $x$ and $z\neq x$. 

(c) Unacceptable of the form $xyz$ where $x,y,z$ are nondiagonal elements from the same row or column (can be equal to each other).  

Unacceptable monomials of type (a) or (b) are multiples of unacceptable monomials of degree 2, so they are in the space $S^s_1U_2$. So we only need to show that the degree 3 unacceptable monomials of type (c) are in $S^s_1\langle Q\rangle$. The 3 nondiagonal elements in the same row or column of the matrix $X$ are from a symmetric $4\times 4$ sub-matrix. So without loss of generality we show that a degree 3 monomial of type (c) from the following sub-matrix is in $S^s_1\langle Q\rangle$. Let $A^s$ be the $4\times 4$ symmetric sub-matrix of a generic symmetric $n\times n$ matrix,$$
A^s=\left(%
\begin{array}{cccc}
 
  a & b & c & d\\
  b & e & f & g\\
  c & f & h & i\\
  d & g & i & j\\

  \end{array}%
\right),$$

and $D^s$ be the matrix

$$
D^s=\left(%
\begin{array}{cccc}
 
  A & B & C & D\\
  B & E & F & G\\
  C & F & H & I\\
  D & G & I & J\\

  \end{array}%
\right)$$

Monomials of type (c) in $S^s_3$ annihilating $\det(A^s)$ can have one of the following forms:

(I) All three non-diagonal variables are distinct. Consider the monomial $\eta_1=BCD$, a degree three unacceptable monomial of type (c). We have $AF+BC\in \langle Q\rangle$ (since it is a permanent with one diagonal element), so we have $(AF+BC)\circ \det(A^s)=0.$ Hence, $$D(AF+BC)\circ \det(A^s)=0.$$ We also know that $DAF\in S^s_1U_2\subset U_3$ so $DAF\circ  \det(A^s)=0.$ We have $\eta_1=BCD=D(AF+BC)(\text{mod }S^s_1U_2)$. So we have $\eta_1\in S^s_1\langle Q\rangle$. \par

(II) There are only two distinct non-diagonal variables. Consider the monomial $\eta_2=B^2C$, also of type (c). We have $AF+BC\in \langle Q\rangle$ (since it is a permanent with one diagonal element), so we have $(AF+BC)\circ \det(A^s)=0.$ Hence, $$B(AF+BC)\circ \det(A^s)=0.$$ We also know that $BAF\in S^s_1U_2\subset U_3$ so $B^2C\circ  \det(A^s)=0.$ We have $\eta_2=B^2C=B(AF+BC)(\text{mod }S^s_1U_2)$. So we have $\eta_2\in S^s_1\langle Q\rangle$. \par

(III) There is only one non-diagonal variable. Consider the monomial $\eta_3=B^3$, also of type (c). We have $B^2+2AE\in \langle Q\rangle$ (since it is a diagonal permanent with the coefficient 2), so we have $(B^2+2AE)\circ \det(A^s)=0.$ Hence, $$B(B^2+2AE)\circ \det(A^s)=0.$$ We also know that $BAE\in S^s_1U_2\subset U_3$ so $B^3\circ  \det(A^s)=0.$ We have $\eta_3=B^3=B(B^2+2AE)(\text{mod }S^s_1U_2)$. So we have $\eta_3\in S^s_1\langle Q\rangle$. \par

So the lemma is proven for $k=3$ and we have $U_3\subset S^s_1\langle Q\rangle$. Let $P$ denote the subspace of $\langle Q\rangle $ generated by binomials of type (b) and (c) defined in Proposition \ref{prop:gen-det-symm}. We have shown that $U_3\subset S^s_1(U+P)$.

Next assume that $k\ge 4$ and the lemma is established for all integers less than $k$. We want to show that the claim is true for $k$. Let $\mu=\mu_1\mu_2 \cdots \mu_k$ be an unacceptable monomial of degree $k$. We can write $\mu$ such that $\mu_2 \cdots \mu_k$ is an unacceptable monomial of degree $k-1$ so we have
$$\mu=\mu_1(\mu_2 \cdots \mu_k)\in S^s_1(S^s_{k-3}\langle Q\rangle)=S^s_{k-2} \langle Q\rangle. $$
So the lemma is true also for $k$.
\end{proof}
\vspace{.2in}

\begin{Notation} We use the following definitions and notations in the remaining part of this section. 

\begin{itemize}
\item By Lexicographic/Conca order we mean the lexicographic term order induced by the variable order,
$$
Y_{1,1}>Y_{1,2}>\cdots>Y_{1,n}>Y_{2,2}>\cdots>Y_{2,n}>\cdots>Y_{n-1,n}>Y_{n,n}.
$$

By Reverse Lexicographic order we mean the lexicographic term order induced by the variable order,
$$
Y_{1,1}<Y_{1,2}<\cdots <Y_{1,n}<Y_{2,2}<\cdots <Y_{2,n}< \cdots <Y_{n-1,n}<Y_{n,n}.
$$

\item Let $\bf M$ be the $k\times k$ minor of the generic symmetric matrix $X$, with the set of rows $\{a_1,\ldots ,a_k\}$ and the set of the columns $\{b_1,\ldots,b_k\}$ where $a_1<\cdots<a_k$ and $b_1<\cdots< b_k$. Then the initial monomial of $\bf M$ using the lexicographic (Conca) order is  $x_{a_1b_1}\cdots x_{a_kb_k}$.

\item We denote by $[a_1,\ldots,a_k|b_1,\ldots,b_k]$ the $k\times k$ doset minor where the sequence of rows  $a=(a_1,\ldots,a_k)$ and the sequence of columns $b=(b_1,\ldots,b_k)$ are each subsequences of $1,\ldots,n$ satisfying the following conditions:
$$a_1<a_2<\cdots<a_k,$$
$$b_1<b_2<\cdots<b_k,$$
$$a_i\leq b_i, \text{ }\forall \text{ }1\leq i\leq k.$$

\item We denote by $(a_1,\ldots,a_k|b_1,\ldots,b_k)$ the acceptable monomial $x_{a_1b_1}\cdots x_{a_kb_k}$. Note that we write the acceptable monomial $m=(a_1,\ldots,a_k|b_1,\ldots,b_k)$, with $a=(a_i)$ an increasing sequence. But unlike the doset minors, the sequence $b=(b_i)$ doesn't need to be increasing.

\item A \emph{Conca monomial} is the initial monomial of a doset minor in lexicographic order. 

\item The set of all $k\times k$ doset minors form a Gr\"obner basis for the ideal generated by all $k\times k$ minors (Theorem \ref{thm:doset-Conca}). Hence the ideal generated by the set of initial monomials of all minors is equal to the ideal generated by the set of the initial monomials of the doset minors.
\item Let $A_k$ be the set of acceptable monomials in $S^s_k$.
\item Let $\iota:R^s \rightarrow S^s,\iota(x_{ij})=y_{ij}$, and $C_k$ be the subset of $A_k$ defined by $$\{\iota(\mu)|\mu \text{ a Conca initial monomial (in lex order) of a } k\times k \text{ doset minor of } X\}.$$
\item Let $C'_k$ be the complementary set to $C_k$ of acceptable monomials in $A_k$.
\item For each $\mu \in A_k$, let $A_{>\mu}$, denote the subset of elements $\nu \in A_k$, such that $\nu>\mu$ in the lexicographic order of $S^s$.
\item For the monomial $m=x_{a_1b_1}\cdots x_{a_kb_k}$ denoted by $(a_1,\ldots,a_k|b_1,\ldots ,b_k)$, we call a pair $(b_i,b_j)$, with $i<j$, a \emph{reversal pair} if $b_i\ge b_j$. As an example, in $(1,2,3|6,4,5)$, $6\ge 4$ so $(6,4)$ is a reversal pair.

\end{itemize}

\end{Notation}
\vspace{.2in}
\begin{prop}\label{prop:triangularity-det} Each acceptable non-Conca monomial of degree $k$ ($3\leq k\leq n$), is the initial monomial (in the reverse lex order) of an element of $S^s_{k-2}\langle Q\rangle$. 
\end{prop}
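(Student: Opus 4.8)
The plan is to make the triangularity argument explicit by exhibiting, for each acceptable non-Conca monomial $m = (a_1,\ldots,a_k\,|\,b_1,\ldots,b_k)$, a specific element of $S^s_{k-2}V$ whose initial monomial in the reverse lexicographic order is exactly $m$. Recall that $m$ is non-Conca precisely when the sequence $b = (b_i)$ has a reversal pair; so there exist indices $i<j$ with $b_i \ge b_j$. The idea is to ``repair'' one such reversal by multiplying a suitable quadratic generator from $V$ by the complementary monomial, and to check that in the reverse lex order the repaired term $m$ dominates the other term(s) of the product.

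First I would isolate a single reversal pair in a canonical way: among all reversal pairs $(b_i,b_j)$ of $m$, choose one — say the one with $j$ as large as possible, and among those with $i$ as large as possible — and look at the two matrix entries $x_{a_ib_i}$ and $x_{a_jb_j}$ together with the ``crossed'' entries $x_{a_ib_j}$ and $x_{a_jb_i}$. These four entries lie in a symmetrically chosen $\le 4\times 4$ submatrix, so the relevant quadratic relation is one of the generators listed in Proposition 3.1: if all four indices $a_i,a_j,b_i,b_j$ are distinct we use the $2\times 2$ permanent $y_{a_ib_j}y_{a_jb_i} + y_{a_ib_i}y_{a_jb_j}$ (or a Hafnian trinomial, if the crossing produces two reversal-resolving monomials), if exactly one coincidence occurs among them we use a permanent with one diagonal element $y_{jk}y_{il}+y_{jl}y_{ii}$, and if two coincide we use a diagonal binomial $y_{ij}^2 + 2y_{ii}y_{jj}$ or land directly in the span of unacceptable monomials $U_k$ handled by Lemma 3.7. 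Write $m'$ for the monomial obtained from $m$ by replacing $x_{a_ib_i}x_{a_jb_j}$ with the corresponding ``straightened'' pair $x_{a_ib_j}x_{a_jb_i}$ (after reordering indices so that row indices increase); then $m'$ has strictly fewer reversal pairs than $m$, or is unacceptable. Multiplying the chosen quadratic generator by the degree-$(k-2)$ monomial $m/(x_{a_ib_i}x_{a_jb_j})$ gives an element of $S^s_{k-2}V$ whose support is $\{m, m'\}$ (plus possibly one more straightened monomial in the Hafnian case, or an unacceptable monomial).

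The key point is then the comparison: I claim $m > m'$ in the reverse lexicographic order. In reverse lex (with $Y_{1,1} < Y_{1,2} < \cdots < Y_{n,n}$), a monomial is larger when it contains larger variables; passing from $x_{a_ib_i}x_{a_jb_j}$ to $x_{a_ib_j}x_{a_jb_i}$ with $b_i \ge b_j$, $a_i < a_j$, replaces the pair of entries with ones whose larger row/column indices have been decreased — so the variable $x_{a_jb_i}$ (the largest of the four in the Conca variable order) is removed and replaced by $x_{a_jb_j}$ with $b_j \le b_i$; one checks case-by-case that the multiset of exponents of $m'$ is lexicographically smaller than that of $m$ when read from the largest variable down, hence $m > m'$ in reverse lex. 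Thus $m$ is the reverse-lex leading term of the constructed element, and since the non-leading terms are either monomials with strictly fewer reversals (inductively expressible) or unacceptable (by Lemma 3.7), induction on the number of reversal pairs completes the argument, with the base case being monomials with a single reversal, handled directly by the quadratic generator.

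The main obstacle I anticipate is the bookkeeping in the comparison step — ensuring that in \emph{every} coincidence pattern among $a_i,a_j,b_i,b_j$ the straightened monomial $m'$ really is smaller than $m$ in reverse lex, and simultaneously that the auxiliary term produced is strictly smaller too (so that it cannot accidentally equal $m$ or sabotage the triangular structure). A secondary subtlety is confirming that the four entries always lie in an \emph{admissible} $4\times 4$ symmetric submatrix, i.e.\ that the relevant relation is genuinely among the generators enumerated in Proposition 3.1 and not something outside $V$; this is where the symmetry of $X$ and the explicit list (a)--(d) must be invoked carefully. Everything else — the induction on reversals, the reduction of unacceptable remainders via Lemma 3.7 — is routine once these two points are pinned down.
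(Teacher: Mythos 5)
Your construction has two genuine gaps, and the first one breaks the main case outright. When the four indices $a_i,a_j,b_i,b_j$ are pairwise distinct (the generic situation), the binomial $y_{a_ib_j}y_{a_jb_i}+y_{a_ib_i}y_{a_jb_j}$ you propose as the repair element is a $2\times 2$ permanent with \emph{no} diagonal entry; Proposition 3.1(c) only places permanents containing a diagonal entry in $V$, and in fact this binomial does not annihilate $\det(X)$ at all: already for the $4\times 4$ block one computes $(y_{12}y_{34}+y_{14}y_{23})\circ\det(X)\neq 0$, and only the full Hafnian $y_{12}y_{34}+y_{13}y_{24}+y_{14}y_{23}$ lies in $V$. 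So the Hafnian is not an optional variant triggered ``if the crossing produces two reversal-resolving monomials''; it is forced exactly when none of the four positions is diagonal, and your element is then a trinomial whose third term must also be controlled.

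That control is the second gap: your triangularity check is not established, and with your canonical choice of reversal pair it can fail. The three pairings of $c_1<c_2<c_3<c_4$ satisfy $y_{c_1c_2}y_{c_3c_4}>y_{c_1c_3}y_{c_2c_4}>y_{c_1c_4}y_{c_2c_3}$ in the Conca lexicographic order, and (as the paper's Example 3.9 shows) $\mu$ is the initial monomial in the reverse order precisely when all remaining terms are Conca-larger; so your construction works only if the pair you repair realizes the Conca-smallest pairing $\{c_1c_4,c_2c_3\}$. Concretely, take $m=y_{12}y_{13}y_{24}$, an acceptable non-Conca monomial written with increasing rows as $(1,2,3\,|\,2,4,1)$: your rule (largest $j$, then largest $i$) selects the reversal pair corresponding to the entries $y_{24},y_{13}$, the four indices are distinct, and the resulting element $y_{12}\bigl(y_{12}y_{34}+y_{13}y_{24}+y_{14}y_{23}\bigr)$ contains the term $y_{12}y_{14}y_{23}$, which is Conca-\emph{smaller} than $m$; hence $m$ is not the reverse-lex initial monomial of your element (the correct choice here is $y_{24}\bigl(y_{11}y_{23}+y_{12}y_{13}\bigr)$, using a permanent with a diagonal entry rather than the pair your rule picks). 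Your comparison paragraph does not catch this because it misidentifies which of the four variables is Conca-largest and which monomial gains or loses it. For contrast, the paper proves the proposition by induction on $k$: an explicit case analysis at $k=3$ according to the pattern of repeated indices, with a hand-picked Hafnian or diagonal-containing permanent times one variable and an order check in each case, and for $k>3$ it factors $\mu=y_{ab}\mu_1$ with $\mu_1$ acceptable and non-Conca and multiplies the element supplied by induction by $y_{ab}$. (A minor point: your induction on the number of reversal pairs is unnecessary, since the statement only requires $m$ to be the leading monomial of some element of $S^s_{k-2}V$.) Your direct approach could likely be salvaged, but only after specifying which pair of entries to straighten and with which generator of $V$, and proving in all coincidence patterns that every other term produced is Conca-larger than $m$.
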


\begin{proof}
We use induction on $k\ge 3$. First let $k=3$.  Let $\mu$ be a degree 3 acceptable monomial which is not the initial term of any $3\times 3$ doset minor in the lexicographic order. We want to show that $\mu$ is the initial monomial of an element of $S^s_1\langle Q\rangle$ in the reverse lexicographic order. The acceptable monomials $y_{i_1i_2}y_{i_3i_4}y_{i_5i_6}$ of degree 3 for the $n\times n$ generic symmetric matrices can be listed as follows:

(a) All 6 indices are distinct.

(b) There is one repeated index.

(c)  There are 2 repeated indices.

(d) There are 3 repeated indices.
\vspace{0.2in}
We discuss each of the above types separately. In each case we show that $\mu$ is the initial term of an element of $S^s_{1}\langle Q\rangle$ in the reverse lex order.

(a) All 6 indices are distinct $m=y_{i_1i_2}y_{i_3i_4}y_{i_5i_6}$, $(i_1,i_3,i_5|i_2,i_4,i_6)$. Without loss of generality we can assume these indices are 1,2,3,4,5,6. In order to have a non-Conca monomial of this kind, it is enough to have at least one reversal pair. Let $m$ be a monomial with at least one reversal pair. Then $m$ is not the initial monomial of any $3\times 3$ minor of $Y$ in the lexicographic order. So it is not in the ideal generated by all the initial monomials of the $3\times 3$ minors of $Y$. Hence by Theorem  \ref{thm:doset-Conca} it is not in the ideal generated by all the initial monomials of all $3\times 3$ doset minors of $Y$. 

In the doset minor $[i_1,i_3,i_5|i_2,i_4,i_6]$, we have
$$i_1<i_3<i_5,$$
$$i_2<i_4<i_6.$$
Without loss of generality we can assume $i_1=1,i_3=2$, $i_5=3$ $i_2=4,i_4=5$ and $i_6=6$. In this case, the monomial $y_{14}y_{25}y_{36}$ is the initial term in the corresponding $3\times 3$ doset minor using the lexicographic order. 

The corresponding $6\times 6$ symmetric submatrices are 
$$
X=\left(%
\begin{array}{cccccc}
 
  a & b & c & d & e & f \\
  b & g & h & i & j & k  \\
  c & h & l & m & n & o\\
  d & i & m & p & q & r\\
  e & j & n & q & s & t\\
  f & k & o & r & t & u\\
  
   \end{array}%
\right),$$

$$
Y=\left(%
\begin{array}{cccccc}
 
   A & B & C & D & E & F \\
  B & G & H & I & J & K  \\
  C & H & L & M & N & O\\
  D & I & M & P & Q & R\\
  E & J & N & Q & S & T\\
  F & K & O & R & T & U\\

   \end{array}%
\right).$$

Consider a non-Conca degree three monomial involving 6 distinct rows and columns. Each non-initial Conca monomial has at least one reversal pair $(i_j,i_k),(j<k)$ where $j,k\in\{2,4,6\}$ such that $i_j\ge i_k$. Without loss of generality we consider the monomial $\mu=FIN=(1,2,5|6,4,3)$. The hafnian of the following $4\times 4$ symmetric sub-matrix with the rows and columns 1,2,4,6 is an element of $\langle Q\rangle $ by Proposition \ref{prop:gen-det-symm} 

$$
\mathrm{Haf} \left(%
\begin{array}{cccc}
 
   A & B & D & F \\
  B & G & I & K  \\
  D & I &  P & R\\
  F & K & R & U\\

   \end{array}%
\right)=BR+DK+FI.$$

Hence for $\mu = FIN,$ we have $f_\mu=N(BR+DK+FI)\in S_1^s\langle Q\rangle$, where $N(BR+DK)\in A_{\mu}$. So $\mu$ is the initial term of $N$ times a hafnian in in the reverse lex order.

(b) There is one repeated index. Without loss of generality we can assume these indexes are 1,2,3,4,5, with one of them repeated. In order to have a non-Conca example of this kind, it is enough to have 1 reversal pair. For example in $(1,2,3|4,1,5)$, $4>1$ is a reversal pair. We form the $5\times 5$ symmetric matrix with these rows and columns, here

$$
X=\left(%
\begin{array}{ccccc}
 
  a & b & c & d & e \\
  b & f & g & h & i \\
  c & g & j & k & l \\
  d & h & k & m & n\\
  e & i & l & n & o \\

   \end{array}%
\right),$$

$$
Y=\left(%
\begin{array}{ccccc}
 
  A & B & C & D & E \\
  B & F & G & H & I \\
  C & G & J & K & L \\
  D & H & K & M & N\\
  E & I & L & N & O \\

   \end{array}%
\right).$$

The monomial $\mu=y_{14}y_{21}y_{35}=BDL$ as an acceptable monomial of type (b). Consider the hafnian of the following $4\times 4$ symmetric sub-matrix with the rows and columns 1,2,3,5,

$$
\mathrm{Haf} \left(%
\begin{array}{cccc}
 
   A & B & C & E \\
  B & F & G & I  \\
  C & G &  J & L\\
  E & I & L & O\\

   \end{array}%
\right)=BL+CI+EG.$$

Given $\mu=BDL$, we have $f_{\mu}=D(BL+CI+EG)\in S^s_1\langle Q\rangle, \text{ where } D(CI+EG)\in A_{>\mu}.$ So $\mu=BDL$ is the initial term of $D$ times a hafnian in in the reverse lex order.

(c) There are 2 repeated indices, Without loss of generality we can assume these indexes are 1,2,3,4, with two of them repeated. In order to have a non-Conca example of this kind, it is enough to have one reversal pair. For example in $(1,2,3|2,1,4)$, $2> 1$. We can form a $4\times 4$ symmetric matrix with these rows and columns,

$$
X=\left(%
\begin{array}{cccc}
 
  a & b & c & d  \\
  b & e & f & g \\
  c & f & h & i \\
  d & g & i & j \\

   \end{array}%
\right),$$

$$
Y=\left(%
\begin{array}{cccc}
 
  A & B & C & D  \\
  B & E & F & G \\
  C & F & H & I \\
  D & G & I & J \\

   \end{array}%
\right).$$

Now we consider the monomial $\mu=y_{12}y_{21}y_{34}=B^2I$ as an acceptable monomial of type~(c). Given $\mu=B^2I$, we have $f_{\mu}=I(B^2+2AE)\in S^s_1\langle Q\rangle$ since $B^2+2AE$ is a binomial in $\langle Q\rangle $ (see Proposition \ref{prop:gen-det-symm}). We have $AEI \in A_{>\mu}$ so $\mu=B^2I$ is the initial term of $I$ times a binomial in $\langle Q\rangle $ in in the reverse lex order.

(d) There are 3 repeated indices, Without loss of generality we can assume these indexes are 1,2,3, all of them repeated. In order to have a non-Conca example of this kind, it is enough to have 1 reversal pair. For example in $(1,2,3|3,2,1)$, $3> 1$. We can form a $3\times 3$ symmetric matrix with these rows and columns,

$$
X=\left(%
\begin{array}{ccc}
 
  a & b & c  \\
  b & d & e  \\
  c & e & f \\

   \end{array}%
\right),$$

$$
Y=\left(%
\begin{array}{ccc}
 
  A & B & C  \\
  B & D & E  \\
  C & E & F \\

   \end{array}%
\right).$$

Now we consider the monomial $\mu=y_{13}y_{22}y_{31}=C^2D$ as an acceptable monomial of type (c). Given $\mu=C^2D$, $f_{\mu}=C(BE+CD)\in S^s_1\langle Q\rangle, \text{ where } BEC \in A_{>\mu}.$\par
\vspace{0.2in}
Since all other cases are similar to the above examples, for $k=3$ the claim of Proposition \ref{prop:triangularity-det} is true. Now assume that the Proposition \ref{prop:triangularity-det} is true for all integers less than $k$. We have to show that the Proposition \ref{prop:triangularity-det} is also true for $k$. Let $\mu=y_{i_1j_1}\cdots y_{i_kj_k}=(i_1,\ldots,i_k|j_1,\ldots,j_k)$ be a degree $k$ acceptable non-Conca monomial, so it has at least one reversal pair. We can consider $\mu$ as the product of one variable, $y_{ab}$ and a degree $k-1$ acceptable non-Conca monomial, $\mu_1$, containing at least one reversal pair. Then by the induction assumption $\mu_1$  is the initial monomial (in rev. lex.) of an element of $S^s_{k-3}\langle Q\rangle$. So we have $\mu=y_{ab}\mu_1$ is the initial monomial (in rev. lex.) of an element of $S^s_{k-2}\langle Q\rangle$. This completes the proof.
\end{proof}
\vspace{.2in}

\begin{ex}
Consider the case $n=3$. we have 
$$
X=\left(%
\begin{array}{ccc}
 
  a & b & c \\
  b & d & e \\
  c & e & f \\
  
   \end{array}%
\right),$$

$$
Y=\left(%
\begin{array}{ccc}
 
  A & B & C \\
  B & D & E \\
  C & E & F \\
  
   \end{array}%
\right),$$

There are 5 acceptable degree 3 monomials. Using the lexicographic term order induced by the variable order,
$$
Y_{1,1}>Y_{1,2}>Y_{1,3}>Y_{2,2}>Y_{2,3}>Y_{3,3},
$$

we have the following order on the degree 3 acceptable monomials
$$
ADF>AE^2>B^2F>BEC>C^2D.
$$

\begin{itemize}

\item The set of Conca initial monomials of degree three, $C_3$, is the subspace spanned by the set $\{ADF\}$.
\item The set of all acceptable degree three monomials that are not in $C_3$ is spanned by $$C'_3=\{AE^2,B^2F,BEC,C^2D\}.$$
\item For $\mu_1=C^2D$, $f_{\mu_1}=C^2D+2ADF=D(C^2+2AF)\in S^s_1\langle Q\rangle, \text{ where } ADF\in A_{>\mu_1}.$
\item For $\mu_2=BCE$, $f_{\mu_2}=BEC+AE^2=E(BC+AE)\in S^s_1\langle Q\rangle,  \text{ where } AE^2\in A_{>\mu_2}.$
\item For $\mu_3=B^2F$, $f_{\mu_3}=B^2F+2ADF=F(B^2+2AD)\in S^s_1\langle Q\rangle , \text{ where } ADF\in A_{>\mu_3}.$
\item For $\mu_4=AE^2$, $f_{\mu_4}=A(E^2+2DF)\in S^s_1\langle Q\rangle, \text{ where } ADF\in A_{>\mu_4}.$

\end{itemize}
\vspace{0.2in}

Hence each acceptable non-Conca monomial of degree three is the initial monomial (in the reverse Lex. order) of an element of $S^s_1\langle Q\rangle$. 

\end{ex}

\vspace{.2in}
\begin{cor}\label{cor:main-symm-det-k}
For $1\leq k \leq n$ we have
\begin{equation*}
(Q)_k=\mathrm{Ann}(\det(X))\cap S^{s}_{k}.
\end{equation*}

We also have $(Q)_{n+1}=S^{s}_{n+1}$.

\end{cor}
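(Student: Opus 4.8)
The strategy is to combine the machinery already in place---Lemmas 3.3, 3.5, 3.7, Proposition 3.8, and the doset count of Lemma 2.5---into a two-sided inclusion, closed off by a dimension count. The inclusion $(V)_k\subseteq\mathrm{Ann}(\det(X))\cap S^s_k$ is essentially immediate: for $k=1$ both sides vanish, and for $2\le k\le n$ Lemma 3.5 gives $(V)_k\subseteq\mathrm{Ann}(M_k(X))\cap S^s_k$, which by Lemma 3.3 ($M_k(X)=S^s_{n-k}\circ\det(X)$) equals $\mathrm{Ann}(S^s_{n-k}\circ\det(X))_k=(\mathrm{Ann}(\det(X)))_k$ via Remark 1.2.

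For the reverse inclusion I would prove the spanning statement $S^s_k=(V)_k+\langle C_k\rangle_{\sf k}$ (for $3\le k\le n$; the case $k=1$ is vacuous and $k=2$ is Lemma 3.4) by the triangularity method. Each monomial of $S^s_k$ is either unacceptable---hence an element of $S^s_{k-2}V=(V)_k$ by Lemma 3.7---or acceptable; an acceptable non-Conca monomial $\mu$ is, by Proposition 3.8, the reverse-lexicographic initial monomial of some $f_\mu\in S^s_{k-2}V=(V)_k$, so modulo $(V)_k$ it is congruent to a $\sf k$-combination of strictly reverse-lex-smaller monomials, among which the unacceptable ones again vanish mod $(V)_k$. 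Since reverse lex is a monomial order, iterating this rewriting terminates and expresses every monomial of $S^s_k$, modulo $(V)_k$, as a combination of Conca monomials $C_k$; thus $\dim_{\sf k}S^s_k\le\dim_{\sf k}(V)_k+|C_k|$. On the other hand, by Lemma 3.3 and the apolarity pairing $\dim(\mathrm{Ann}(\det(X)))_k=\dim S^s_k-\dim(S^s_{n-k}\circ\det(X))=\dim S^s_k-\dim M_k(X)$, and by Lemma 2.5 (via Conca's Theorem 2.2) $\dim M_k(X)$ equals the number of $k\times k$ doset minors, which is precisely $|C_k|=N(n,k)$. Hence $\dim S^s_k\le\dim(V)_k+|C_k|\le\dim(\mathrm{Ann}(\det(X)))_k+|C_k|=\dim S^s_k$, so all inequalities are equalities, the sum $S^s_k=(V)_k\oplus\langle C_k\rangle_{\sf k}$ is direct, $\dim(V)_k=\dim(\mathrm{Ann}(\det(X)))_k$, and together with the first paragraph $(V)_k=\mathrm{Ann}(\det(X))\cap S^s_k$ for $1\le k\le n$. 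I expect this triangularity step---checking that Proposition 3.8 genuinely yields a leading-term reduction for the reverse-lex order, so that the interleaved rewriting of non-Conca acceptable monomials and elimination of unacceptable ones cannot cycle and does terminate---to be the delicate point; the rest is bookkeeping already packaged in the cited results.

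Finally, for $(V)_{n+1}=S^s_{n+1}$: since $M_0(X)$ is one-dimensional, $(V)_n=(\mathrm{Ann}(\det(X)))_n$ is a hyperplane in $S^s_n$, and by the direct-sum decomposition just obtained the unique $n\times n$ Conca monomial $y_{11}y_{22}\cdots y_{nn}$ spans a complement to it. Because $(V)_{n+1}=S^s_1\cdot(V)_n$, it is enough to check that $y_{ij}\,y_{11}y_{22}\cdots y_{nn}\in(V)_{n+1}$ for every $i\le j$; but this monomial is divisible by $y_{ii}^2$ when $i=j$ and by $y_{ii}y_{ij}$ when $i\ne j$, and both of these lie in $V$ (elements of type (a)), so the monomial belongs to $S^s_{n-1}V=(V)_{n+1}$. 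Therefore $S^s_{n+1}=S^s_1\cdot S^s_n=S^s_1(V)_n+S^s_1\langle y_{11}y_{22}\cdots y_{nn}\rangle\subseteq(V)_{n+1}$, which completes the proof.
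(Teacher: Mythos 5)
Your proof is correct and takes essentially the same route as the paper: the easy inclusion $(V)_k\subseteq\mathrm{Ann}(\det X)\cap S^s_k$ from the earlier lemmas, then equality by the dimension count $\dim(\mathrm{Ann}(\det X))_k=\dim S^s_k-\dim M_k(X)$ combined with Lemma 3.7, Proposition 3.8 and the Conca/doset count of Lemma 2.5 --- your spanning (rewriting) formulation is just the dual of the paper's triangularity statement that $U_k\cup\{f_\mu\}_{\mu\in C'_k}$ is linearly independent in $(V)_k$, giving the identical inequality $\dim(V)_k\ge |U_k|+|C'_k|=\dim S^s_k-|C_k|$. A small bonus on your side: you prove $(V)_{n+1}=S^s_{n+1}$ explicitly (every $y_{ij}\,y_{11}\cdots y_{nn}$ is divisible by $y_{ii}^2$ or $y_{ii}y_{ij}$, which lie in $V$), a point the paper's proof of the corollary leaves unaddressed.
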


\begin{proof}
By Lemmas \ref{lem:easydirectionk-det-dymm} and \ref{lem:Scircledet-symm} we have

$$
S^s_{k-2}\langle Q\rangle=(Q)_k\subset \mathrm{Ann}(\det(X))\cap S^s_k.
$$

By Remark \ref{remark:introIK} and Lemma \ref{lem:Scircledet-symm} we have
\begin{equation*}
(\mathrm{Ann}(\det(X)))_k=(\mathrm{Ann}(S^s_{n-k}\circ(\det(X)))_k=(\mathrm{Ann}(M_{k}(X)))_k
\end{equation*}

So we have

\begin{equation*}
\dim S^s_{k-2}\langle Q\rangle\leq \dim(\mathrm{Ann}(\det(X))\cap S^s_k)=\dim S^s_k-\dim M_k(X).
\end{equation*}

On the other hand, by definition the sets $U_k$ and $C'_k$ are linearly independent and form a basis for the corresponding subspaces. Hence by Lemma \ref{lem:det-symm-unacceptable-traiangularity} and Proposition \ref{prop:triangularity-det} we have

$$
\dim S^s_k-\dim M_k(X)=\dim \langle C'_k \rangle+\dim \langle U_k \rangle  \leq \dim S^s_{k-2}\langle Q\rangle.
$$

So we have
$$
\dim (Q)_k=\dim S^s_{k-2}V=\dim S^s_k-\dim M_k(X)=\dim( \mathrm{Ann}(\det(X))\cap S^s_k).
$$

\end{proof}

\vspace{.2in}

\begin{thm}\label{main-theorem-det-symmetric}
Let $X$ be a generic symmetric $n\times n$ matrix. Then the apolar ideal $\mathrm{Ann}(\det(X))$ is the ideal $(Q)$ and is generated in degree 2.
\end{thm}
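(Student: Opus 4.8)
The plan is to assemble the results of this subsection; all the substantial work is already done, and the theorem is the step that packages it. Since $(V)$ is by construction the ideal generated by the degree-two space $V\subset S^s_2$ of Proposition 3.1, the theorem amounts to the equality of homogeneous ideals $(V)=\mathrm{Ann}(\det(X))$, which I would check degree by degree in three ranges: $k\le 1$, $2\le k\le n$, and $k\ge n+1$.

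In degree $0$ both ideals are $0$ since each is proper (note $\det(X)\neq 0$, and $(V)$ is generated in positive degree). In degree $1$, $(V)_1=0$ because $V$ lies in degree two, while $(\mathrm{Ann}(\det(X)))_1=0$ because by Lemma 3.3 the map $S^s_1\to R^s_{n-1}$, $h\mapsto h\circ\det(X)$, is onto $M_{n-1}(X)$, whose dimension is $N(n,n-1)=\binom{n+1}{2}=\dim S^s_1$ by the Narayana count of Section~2, so this map is injective.

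For $2\le k\le n$ the equality $(V)_k=\mathrm{Ann}(\det(X))\cap S^s_k$ is exactly Corollary 3.10, which rests on Propositions 3.1 and 3.6, Lemma 3.2 and the triangularity statements Lemma 3.7 and Proposition 3.8 via the dimension count $\dim S^s_{k-2}V=\dim S^s_k-\dim M_k(X)$. For $k\ge n+1$, since $\deg\det(X)=n$ every homogeneous element of degree $k$ annihilates $\det(X)$, so $\mathrm{Ann}(\det(X))\cap S^s_k=S^s_k$; on the other side Corollary 3.10 also records $(V)_{n+1}=S^s_{n+1}$, and since $(V)$ is an ideal we get $S^s_k=S^s_{k-n-1}\cdot S^s_{n+1}=S^s_{k-n-1}\cdot(V)_{n+1}\subseteq(V)_k\subseteq S^s_k$, hence $(V)_k=S^s_k$. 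Combining the three ranges gives $(V)=\mathrm{Ann}(\det(X))$, and this ideal is generated in degree two by the polynomials of types (a)--(d) in Proposition 3.1.

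I do not anticipate any real obstacle: the hard work is behind us --- the Hafnian annihilation of Proposition 3.1, the $n=4$ base case of Proposition 3.6, and especially the reverse-lexicographic triangularity argument of Proposition 3.8 together with the dimension count of Corollary 3.10 --- and Theorem 3.11 is the bookkeeping that assembles these. If anything, the only points worth spelling out carefully are the injectivity in degree one and the propagation of $(V)_{n+1}=S^s_{n+1}$ upward, both routine.
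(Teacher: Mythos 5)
Your proposal is correct and follows essentially the same route as the paper, whose proof of this theorem simply assembles Lemma 3.7, Proposition 3.8 and Corollary 3.10. The only difference is that you spell out the routine bookkeeping in degrees $k\le 1$ and $k\ge n+1$ (via the Narayana count $\dim M_{n-1}(X)=\binom{n+1}{2}=\dim S^s_1$ and the propagation of $(V)_{n+1}=S^s_{n+1}$), which the paper leaves implicit.
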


\begin{proof}
This follows directly from Lemma \ref{lem:det-symm-unacceptable-traiangularity}, Proposition \ref{prop:triangularity-det} and Corollary \ref{cor:main-symm-det-k}.
\end{proof}
\vspace{.2in}

\begin{prop}\label{prop:grobner-det-symmetric}
The set $Q $ is a Gr\"{o}bner basis for the ideal  $\mathrm{Ann}(\det(X))$.
\end{prop}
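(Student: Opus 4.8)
The plan is to fix the reverse lexicographic order on $S^s$ from the Notation above, write $I=\mathrm{Ann}(\det X)=(V)$ (Theorem 3.11), and show that the initial ideal $\mathrm{in}(I)$ is generated by the initial monomials $\mathrm{in}(v)$, $v\in\{V\}$. Since $(\mathrm{in}(v):v\in\{V\})\subseteq\mathrm{in}(I)$ holds automatically, it suffices to prove that every monomial of $\mathrm{in}(I)$ is divisible by some $\mathrm{in}(v)$. The first step is to identify $\mathrm{in}(I)$. By Lemma 3.7 each degree-$k$ unacceptable monomial ($2\le k\le n$) lies in $S^s_{k-2}V=(V)_k=I_k$, and by Proposition 3.8 each degree-$k$ acceptable non-Conca monomial ($3\le k\le n$) is the reverse-lex initial term of an element of $S^s_{k-2}V$; in degrees $>n$ one has $(V)_k=S^s_k$ by Corollary 3.10. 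Hence $\mathrm{in}(I)_k$ contains every non-Conca monomial of degree $k$. Since $\dim\mathrm{in}(I)_k=\dim I_k=\dim S^s_k-\dim M_k(X)=\dim S^s_k-N(n,k)$ (Corollary 3.10 and Lemma 2.5), and $N(n,k)$ is exactly the number of degree-$k$ Conca monomials (distinct doset $k$-minors have distinct lexicographic leading monomials), these non-Conca monomials form a basis of $\mathrm{in}(I)_k$. Thus $\mathrm{in}(I)$ is precisely the monomial ideal of all non-Conca monomials, and everything reduces to showing that every non-Conca monomial is a multiple of some $\mathrm{in}(v)$.

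For this I would first treat degree two by computing $\mathrm{in}(v)$ for each type in Proposition 3.1. The type-(a) elements are the degree-two unacceptable monomials $y_{ii}y_{ij}$ themselves; $\mathrm{in}(y_{ij}^2+2y_{ii}y_{jj})=y_{ij}^2$; and the reverse-lex leading term of the Hafnian of rows/columns $i_1<i_2<i_3<i_4$ is the nested monomial $y_{i_1i_4}y_{i_2i_3}$. The type-(c) element $y_{ii}y_{jl}+y_{ij}y_{il}$ is the delicate one: its two monomials are complementary, in that exactly one of them is Conca — the loop-plus-edge monomial $y_{ii}y_{jl}$ when $i$ is not strictly between $j$ and $l$, and the two-edge path $y_{ij}y_{il}$ when $i$ is strictly between them — and a direct check of the reverse-lex order shows that in every case the leading term is the non-Conca monomial. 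A short classification of degree-two monomials by their ``edge multigraph'' (loop $+$ loop, loop $+$ incident edge, double edge, two-edge path, two disjoint edges) then shows that the non-Conca ones are precisely the monomials just produced; equivalently, one checks the $\mathrm{in}(v)$ are pairwise distinct and that their number $\dim V$ from Lemma 3.2 equals $\dim S^s_2-N(n,2)$. Either way, every non-Conca degree-two monomial equals $\mathrm{in}(v)$ for some $v\in\{V\}$.

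Now let $\mu$ be a non-Conca monomial of degree $k\ge 3$. If $\mu$ is unacceptable, its edge multigraph has a vertex of degree at least three (a loop counted twice); selecting three edge-incidences at that vertex exhibits a degree-two divisor of $\mu$ that is a type-(a) monomial, a square $y_{ab}^2$ of an off-diagonal variable, or a two-edge path centered at a vertex that is not the median of its three indices — in every case a non-Conca degree-two monomial, hence some $\mathrm{in}(v)$ by the previous step. If instead $\mu$ is acceptable non-Conca, the proof of Proposition 3.8 presents $\mu$ in the form $w\cdot\mathrm{in}(v)$ for a monomial $w$ and some $v\in\{V\}$, so $\mathrm{in}(v)\mid\mu$. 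Together with the degree-two case this shows that every monomial of $\mathrm{in}(I)$ is divisible by some $\mathrm{in}(v)$, so $\mathrm{in}(I)=(\mathrm{in}(v):v\in\{V\})$ and $\{V\}$ is a Gr\"obner basis of $I$.

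I expect the main obstacle to lie in the second paragraph: determining the reverse-lex initial terms of the type-(b), (c) and (d) generators and confirming that, together with the type-(a) monomials, they exhaust precisely the non-Conca degree-two monomials. The type-(c) computation is the subtle point, since which of the two monomials is the leading term, and whether that term is Conca or non-Conca, depends on the ordering of the three indices involved, so the complementarity has to be tracked with care.
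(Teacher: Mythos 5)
Your argument is correct in substance but it is a genuinely different proof from the paper's. The paper proves Proposition 3.12 by Buchberger's criterion: it runs through the pairs of generators (permanent--permanent, Hafnian--Hafnian, permanent--Hafnian, with and without a common factor in the leading terms, which it takes in the Conca/lexicographic order) and reduces each $S$-polynomial to zero modulo $V$, with several of the reductions delegated to Macaulay~2. You instead identify the initial ideal outright: Lemma 3.7 and Proposition 3.8 show that $\mathrm{in}(I)$ contains every non-Conca monomial, the dimension count $\dim I_k=\dim S^s_k-N(n,k)$ (Corollary 3.10 together with Lemma 2.5 and the injectivity of doset minor $\mapsto$ flag monomial) forces equality, and the Gr\"{o}bner property then reduces to showing every non-Conca monomial is divisible by some $\mathrm{in}(v)$, $v\in\{V\}$. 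Your degree-two classification checks out: $\mathrm{in}(y_{ij}^2+2y_{ii}y_{jj})=y_{ij}^2$, the leading term of a type-(c) permanent is always its unique non-Conca term, the leading term of a Hafnian is the nested monomial $y_{i_1i_4}y_{i_2i_3}$, these together with the type-(a) monomials are pairwise distinct, and their number matches $\dim S^s_2-N(n,2)$, so they exhaust the non-Conca quadrics; the pigeonhole argument for unacceptable monomials is also fine. What your route buys is a conceptual, computation-free proof that simultaneously exhibits the standard monomials (the Conca monomials); what the paper's route buys is independence from Lemma 3.7/Proposition 3.8, at the cost of a case-heavy and partly machine-checked verification. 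Note also that the two proofs address different term orders: yours gives the Gr\"{o}bner property for the ``reverse lexicographic'' order of Proposition 3.8 (which, to be consistent with Example 3.9, must be read as graded reverse lex for the variable order $Y_{11}>Y_{12}>\cdots>Y_{nn}$, not lex with the reversed variable order as the Notation literally says), whereas the paper's $S$-polynomial computations take leading terms in the Conca lexicographic order; since the statement names no order this is legitimate, but you should state your order precisely.

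The one step you should not leave as stated is the acceptable non-Conca case. You assert that the proof of Proposition 3.8 presents $\mu$ in the form $w\cdot\mathrm{in}(v)$ with $v\in\{V\}$. That is true of cases (a), (c), (d) there, but not of case (b): there $\mu=BDL$ is produced as $D\cdot BL$ via the Hafnian $BL+CI+EG$, and the reverse-lex leading term of that Hafnian is the nested term $EG$, not $BL$, so that construction does not supply the divisibility you need (nor, incidentally, is $BDL$ the reverse-lex leading term of $D(BL+CI+EG)$ as written there). The repair is immediate inside your own framework and requires nothing from that proof: an acceptable non-Conca monomial of degree $\ge 3$ contains two entries exhibiting a repeated row, a repeated column, or a reversal, and the product of those two entries is a non-Conca degree-two monomial, hence equal to $\mathrm{in}(v)$ for some $v\in\{V\}$ by your degree-two classification --- exactly the mechanism you already use for unacceptable monomials (for $\mu=BDL$, the divisor $BD$ is the leading term of the permanent $AH+BD$). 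With that substitution, and the term order pinned down, your proof is complete.
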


\begin{proof}
We have shown that $Q$ generates  $\mathrm{Ann}(\det(X))$, and we use Buchberger's Algorithm to show that $Q$ is a Gr\"{o}bner basis for the ideal $\mathrm{Ann}(\det(X))$.

(1) Let $\mathcal F$ and $\mathcal G$ and be two distinct permanents of $Y$ of type (c) in Proposition \ref{prop:gen-det-symm}. Let $\mathcal F=y_{ii}y_{jk}+y_{ik}y_{ji}$ and $\mathcal G=y_{uu}y_{zv}+y_{uv}y_{zu}$.

\begin{center}  $ \mathcal F= \mathrm{perm}\left(%
\begin{array}{cc}
 
  y_{ii} & y_{ik} \\
  y_{ji} & y_{jk}\\
  \end{array}%
\right)$.

\end{center}

\begin{center}  $\mathcal G= \mathrm{perm}\left(%
\begin{array}{cc}
 
  y_{uu} & y_{uv} \\
  y_{zu} & y_{zv}\\
  \end{array}%
\right)$.

\end{center}

Let $f_1=y_{ii}y_{jk}$ be the leading term of $\mathcal F$, and $g_1=y_{uu}y_{zv}$ be the leading term of $\mathcal G$ with respect to Conca monomial order. Denote the least common multiple of $f_1$ and $g_1$ by $h$. Then we have:

$$ 
S(\mathcal F,\mathcal G)=(h/f_1)F-(h/g_1)G=y_{uu}y_{zv}y_{ik}y_{ji}-y_{ii}y_{jk}y_{uv}y_{zu}.
$$

Now using the multivariate division algorithm, we reduce $S(\mathcal F,\mathcal G)$ relative to the set $Q$. When there is no common factor in the initial terms of $\mathcal F$ and $\mathcal G$ the reduction is zero. First we reduce $S(\mathcal F,\mathcal G)$ dividing by $\mathcal F$, so we will have

$$
S(\mathcal F,\mathcal G)+y_{uv}y_{zu}\mathcal F=y_{uu}y_{zv}y_{ik}y_{ji}+y_{uv}y_{zu}y_{ik}y_{ji}.
$$

Then we reduce the result using $\mathcal G$ this time, so we will have
$$
y_{uu}y_{zv}y_{ik}y_{ji}+y_{uv}y_{zu}y_{ik}y_{ji}-y_{ik}y_{ji}\mathcal G=0.
$$

So we have shown that for all pairs $\mathcal F$ and $\mathcal G$ of distinct permanents of $Y$ of type (c), the $S$-polynomials $S(\mathcal F,\mathcal G)$ reduce to zero with respect to $ Q$.

(2) Let $\mathcal F=y_{ii}y_{jk}+y_{ik}y_{ji}$ and $\mathcal G=y_{ii}y_{lm}+y_{im}y_{li}$ be two permanents whose initial terms have a common factor. We have

$$
S(\mathcal F,\mathcal G)=y_{lm}y_{ik}y_{ji}-y_{jk}y_{im}y_{li}.
$$

Without loss of generality we can restrict to a $5\times 5$ symmetric submatrix. Note that in a $5\times 5$ symmetric sub-matrix we can have two hafnians whose initial terms have one common factor, two permanents whose initial terms have a common factor, and a permanent and a hafnian whose initial terms have a common factor. Denote the $5\times 5$ symmetric submatrix by

\begin{center}  $ \left(%
\begin{array}{ccccc}
 
  A & B & C & D & E \\
  B & F & G & H & I \\
  C & G & J & K & L \\
  D & H & K & M & N \\
  E & I & L & N & O \\
  \end{array}%
\right)$.

\end{center}

Without loss of generality we consider the two permanents $\mathcal F=AG+BC$ and $\mathcal G=AN+DE$.
$$
S(\mathcal F,\mathcal G)= BCN-DEG.
$$
We checked using the multivariate division algorithm in Macaulay 2 that the binomial $BCN-DEG$ reduces to zero mod the initial set of generators $Q$.

Note that any two $4\times 4$ hafnians with the same initial term are exactly the same. So for the hafnians it is enough to consider the $S$-polynomials of hafnians whose initial terms have only one common factor, and of the hafnians whose initial terms do not have a common factor. We should also consider the $S$-polynomials in the case that we have a hafnian and a permanent.

(3) Let $\mathcal F$ and $\mathcal G$ and be two distinct hafnians of $Y$ whose initial terms do not have a common factor. Without loss of generality we can restrict to a $5\times 5$ symmetric matrix as in (2), and consider the two hafnians $\mathcal F=HL+IK+GN$ and $\mathcal G=DG+CH+BK$.

$$
S(\mathcal F,\mathcal G)= BKHL+BIK^2-DG^2N-CGHN.
$$

The multivariate division algorithm in Macaulay 2 shows that the $S$-polynomial $BKHL+BIK^2-DG^2N-CGHN$ reduces to zero.

(4) Let  Let $\mathcal F$ and $\mathcal G$ and be two distinct hafnians of $Y$ whose initial terms have a common factor. Without loss of generality we can restrict to a $5\times 5$ symmetric matrix as in (2). 

Without loss of generality we consider the two hafnians $\mathcal F=CH+DG+BK$ and $\mathcal G=CI+EG+BL$.

$$
S(\mathcal F,\mathcal G)= CHL+DGL-CIK-EGK.
$$

Using the multivariate division algorithm in Macaulay 2, it is easy to see that the polynomial, $CHL+DGL-CIK-EGK$, reduces  to zero. We also show the reduction process for this example directly. We want to reduce the polynomial $S(\mathcal F,\mathcal G)$ using the set $Q$. The initial term for this polynomial is $CHL$. So we should find elements of the set $Q$ other than $\mathcal F$ and $\mathcal G$ whose initial terms divide $CHL$. We have the following three possibilities:

(a) The initial term is $CH$. There is no permanent or hafnian with this initial term in the set $Q$.

(b) The initial term is $CL$. The only element of the set $Q$ with this initial term is the permanent $CL+EJ$.

(c) The initial term is $HL$. There is no permanent or hafnian with this initial term in the set $Q$.

So we reduce $S(\mathcal F,\mathcal G)$ using $CL+EJ$, and we get

$$S'=S(\mathcal F,\mathcal G)-H(CL+EJ)=-CIK+DGL-EGK-JHE.$$

Now the initial term of $S'$ is $CIK$, and we again do the reduction process. Here we have three different possibilities to choose an element from $Q$.

(a') The initial term is $CI$. There is no permanent or hafnian with this initial term in the set $Q$.

(b') The initial term is $CK$. The only element of the set $Q$ with this initial term is the permanent $CK+DJ$.

(c') The initial term is $IK$. There is no permanent or hafnian with this initial term in the set $Q$.

So we reduce $S'$ using $CK+DJ$, and we get

$$S''=S'-I(CK+DJ)=DGL-EGK-JHE+DIJ.$$

Again we look at the three different degree 2 monomials which divide the initial term of $S''$, we have

(a'') The initial term is $DG$. There is no permanent or hafnian with this initial term in the set $Q$.

(b'') The initial term is $GL$. The only element of the set $Q$ with this initial term is the permanent $GL+IJ$.

(c'') The initial term is $DL$. There is no permanent or hafnian with this initial term in the set $Q$.

So we reduce $S''$ using $GL+IJ$, and we get

$$S'''=S''-D(GL+IJ)=-E(GK+HJ)\in V.$$

So the $S$-polynomial can be reduced to zero using the set $\langle Q\rangle $.

(5) Let $\mathcal F$ be a permanent and $\mathcal G$ be a hafnian of $Y$ whose initial terms do not have a common factor. Without loss of generality we can restrict to a $5\times 5$ symmetric matrix as in (2). 

Without loss of generality we consider two permanents $\mathcal F=2FJ+G^2$ and $\mathcal G=CI+EG+BL$.

$$
S(\mathcal F,\mathcal G)=BLG^2-2FJCI-2FJEG.
$$

The multivariate division algorithm in Macaulay 2 shows that the $S$-polynomial, $BLG^2-2FJCI-2FJEG$ can be reduced to zero using the set $\langle Q\rangle $.

(6) Let  Let $\mathcal F$ be a permanent and $\mathcal G$ and be a hafnian of $Y$ whose initial terms have a common factor. Without loss of generality we can restrict to a $5\times 5$ symmetric matrix as in (2), 
and consider two permanents $\mathcal F=BG+CF$ and $\mathcal G=CI+EG+BL$.

$$
S(\mathcal F,\mathcal G)=CFL-CIG-EG^2.
$$

The multivariate division algorithm in Macaulay 2 shows that the $S$-polynomial,  $CFL-CIG-EG^2$  reduces to zero.

\end{proof}


\subsection{Apolar ideal of the permanent} 
\label{subsection:Apolar ideal of the permanent-symm}

In this section we determine the apolar ideal of the permanent of the $n\times n$ generic symmetric matrix, and we will show that it is generated by degree two and degree three polynomials. We first determine the generators of degree two (Proposition \ref{prop:gens-deg2-perm-symm}). We then determine the degree three generators, which occur when $n\ge 6$ (Lemma  \ref{prop:gens-deg3-perm-symm}). A key step is to use triangularity to show that these degree two and degree three elements, generate the apolar ideal (Lemma \ref{lem:perm-sym-unacceptable-trangularity} and Proposition \ref{prop:main-triangularity-perm-symm}).  This leads to our main result (Theorem 3.23).

\vspace{.2in}

Analogous to Proposition \ref{prop:gen-det-symm} we have:

\vspace{.2in}

\begin{prop}\label{prop:gens-deg2-perm-symm}
For an $n\times n$ symmetric matrix $X=(x_{ij})$, the annihilator $\mathrm{Ann}(\mathrm{Perm}(X))\subset S^s$, includes the following degree 2 polynomials:

(a) Unacceptable monomials including $y_{ii}y_{ij}$ for all $1\leq i,j \leq n$ ($i=j$ is allowed). The number of these monomials is $n^2$.

(b) All the $2\times 2$ diagonal minors of $Y$ with a coefficient  $2$ on the diagonal term, i.e. $y_{ij}^2-2y_{ii}y_{jj}$ ($i\neq j$). The number of these binomials is $n\choose 2$.

(c) All the $2\times 2$ minors of $Y$ with one diagonal element, i.e. $y_{ji}y_{il}-y_{jl}y_{ii}$ ($i$, $j$, and $l$ are pairwise distinct). The number of these binomials is $n\cdot{{n-1}\choose 2}$.

\end{prop}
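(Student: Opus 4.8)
The plan is to follow the structure of the proof of Proposition~3.1, working with $\mathrm{Perm}(X)=\sum_{\sigma\in S_n}\prod_{i=1}^{n}x_{i,\sigma(i)}$ and the differentiation action $\circ$. The one structural change is that the sign cancellation used for $\det(X)$ must be replaced, for the permanent, by the sign-free permanental Laplace (cofactor) expansion; this is also why the type (b) and (c) binomials carry a minus sign here rather than a plus. There is no degree-two analogue of the type (d) Hafnian generators of Proposition~3.1, consistent with the announcement that the permanent acquires its extra generators only in degree three.

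First, for part (a), I would argue that $y_{ii}y_{ij}$ (taking $j=i$ this is $y_{ii}^{2}$) is an unacceptable monomial, hence lies in $\mathrm{Ann}(\mathrm{Perm}(X))$: if $\prod_k x_{k,\sigma(k)}$ is divisible by $x_{ii}$ then $\sigma(i)=i$, and by the symmetry $x_{ij}=x_{ji}$ a further factor $x_{ij}$ would force $\sigma(i)=j$ or $\sigma(j)=i$, the first contradicting $\sigma(i)=i$ and the second contradicting injectivity of $\sigma$. Running over ordered pairs $(i,j)$ produces the $n^{2}$ monomials.

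For part (b), the computation I would carry out is the following: applying $y_{ii}y_{jj}$ to $\mathrm{Perm}(X)$ gives $P_{ij}$, the permanent of the $(n-2)\times(n-2)$ symmetric submatrix obtained by deleting rows and columns $i$ and $j$, whereas applying $y_{ij}^{2}$ gives $2P_{ij}$ --- the factor $2$ arising because, by symmetry of $X$, the entry $x_{ij}$ sits at the two positions $(i,j)$ and $(j,i)$, so that already the first differentiation by $y_{ij}$ yields twice the off-diagonal cofactor. Hence $(y_{ij}^{2}-2y_{ii}y_{jj})\circ\mathrm{Perm}(X)=0$, and there are $\binom{n}{2}$ such binomials. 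For part (c) the same bookkeeping applies to the $2\times2$ minor $y_{ii}y_{jl}-y_{ij}y_{il}$ (the minor on rows $\{i,j\}$ and columns $\{i,l\}$, which up to transpose is the unique $2\times2$ minor with exactly one diagonal position): both $y_{ii}y_{jl}\circ\mathrm{Perm}(X)$ and $y_{ij}y_{il}\circ\mathrm{Perm}(X)$ equal $2Q$, where $Q$ is the permanent of the submatrix of $X$ obtained by deleting rows $\{i,j\}$ and columns $\{i,l\}$; hence this binomial annihilates $\mathrm{Perm}(X)$, and choosing the diagonal index in $n$ ways and the other two indices in $\binom{n-1}{2}$ ways accounts for $n\binom{n-1}{2}$ binomials.

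I do not expect a genuine obstacle: this is the easier permanent counterpart of Proposition~3.1. The one point that needs care is the coefficient bookkeeping --- because $x_{ij}=x_{ji}$, differentiating $\mathrm{Perm}(X)$ by $y_{ij}$ picks up a factor $2$ relative to the corresponding cofactor, and one must normalize the binomials so that the images of their two monomials coincide (whence the minus sign, which here plays the role the internal signs of the determinant played in Proposition~3.1). The genuinely hard parts of the permanent case --- the degree-three generators (Lemma~3.18) and the triangularity lemmas showing that the listed generators exhaust the apolar ideal --- lie beyond this statement.
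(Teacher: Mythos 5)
Your proposal is correct and takes essentially the same approach as the paper: the paper proves (b) and (c) by pairing each permutation $\sigma_1$ whose term contains $x_{ii}x_{jj}$ with $\sigma_2=\tau\sigma_1$, $\tau=(i,j)$, whose term contains $x_{ij}^2$, and this pairwise cancellation is exactly what you obtain globally by computing $y_{ii}y_{jj}\circ\mathrm{perm}(X)=P_{ij}$ versus $y_{ij}^2\circ\mathrm{perm}(X)=2P_{ij}$ (and both images equal to $2Q$ in case (c)), in the style of the paper's own Lemma 3.15. Your treatment of (a) via unacceptable monomials and all three counts match the paper, so there is no gap.
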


\begin{proof}
We have $\mathrm{Perm}(X)=\sum _ {\sigma \in S^s_n}  \Pi x_{i,\sigma(i)}$. First we show that the monomials of type (a) are in $\mathrm {Ann}(\mathrm{Perm}(X))$. By symmetry we have

$$y_{ii}y_{ij}\circ \mathrm{Perm}(X)=0 (\text{where } j\geq i),$$ $$y_{ii}y_{ji}\circ\mathrm{Perm}(X)=0 (\text{where } j\leq i).$$

Next we want to show that the binomials of type (b) are in $\mathrm {Ann}(\mathrm{Perm}(X))$. Let $M_{\widehat{\{i,j\},\{i,j\}}}$ be an $(n-2)\times(n-2)$ submatrix of $X$, which does not include the rows and columns $i$ and $j$. Let $M=2y_{ii}y_{jj}-y_{ij}^2$. 

$$y_{ii}y_{jj}\circ \mathrm{Perm}(X)=Perm(M_{\widehat{\{i,j\},\{i,j\}}}),$$
$$y_{ij}^2\circ \mathrm{Perm}(X)=2Perm (M_{\widehat{\{i,j\},\{i,j\}}}).$$

Hence we have $M\circ \mathrm{Perm}(X)=0$.

To show that the binomials of type (c) are in the annihilator ideal we can use a similar proof to that used for the binomials of type (b). Let $M_{\widehat{\{i,j\},\{i,l\}}}$ be an $(n-2)\times(n-2)$ submatrix of $X$, which does not include the rows $i$ and $j$ and the columns $i$ and $l$.

$$y_{ji}y_{il}\circ \mathrm{Perm}(X)=2\mathrm{Perm}(M_{\widehat{\{i,j\},\{i,l\}}}),$$
$$y_{jl}y_{ii}\circ \mathrm{Perm}(X)=2\mathrm{Perm}(M_{\widehat{\{i,j\},\{i,l\}}}).$$

Hence we have $M\circ \mathrm{Perm}(X)=0$.

\end{proof}

\vspace{.2in}

\begin{defi}\label{def:Wperm-symm}
We denote by $Q'$ the set of the degree 2 elements of type (a), (b) and (c) in Proposition \ref{prop:gens-deg2-perm-symm}, and by $\langle Q' \rangle$ the vector subspace of $S^s$ spanned by $Q'$. We denote by $Q'_a$, $Q'_b$,and  $Q'_c$ the set of elements in $(a)$, $(b)$,and $(c)$ respectively.
\end{defi}
\vspace{.2in}

Analogous to Lemma \ref{lem:symm-det-dimV} we have:
\vspace{.2in}

\begin{lem}\label{lem:symm-perm-dimw}
The set $Q'$ is linearly independent and we have,
  \begin{equation*}
 \dim_{\sf k}\langle Q' \rangle =n^2+{n\choose 2}+n\cdot{{n-1}\choose 2}.
 \end{equation*}
 
 \end{lem}
 
\begin{proof}
Each of the three subsets are linearly independent from each other since they involve different variables. So if we show that each subset is linearly independent we are done. The subset $Q'_a$ is linearly independent since the monomials in $Q'_a$ form a Gr\"{o}bner basis for the ideal they generate. The subsets $Q'_b$ and $Q'_c$ are linearly independent since by choosing two elements of the matrix, where at least one element is diagonal, we have a unique $2\times 2$ minor. Hence the set $Q'$ is linearly independent and the dimension of the vector space $\langle Q' \rangle$ is $n^2+{n\choose 2}+n\cdot{{n-1}\choose 2}$.

 \end{proof}
 
 \begin{Notation}
For a generic symmetric $n\times n$ matrix $X$, we denote by $P_k(X)$ the space of the permanents of all $k\times k$ sumatrices of $X$.

\end{Notation}

\vspace{.2in}
Analogous to Lemma \ref{lem:Scircledet-symm} we have:
\vspace{.2in}
\begin{lem}\label{lem:Scircleperm-symm} Let $1\leq k\leq n$. We have
 \begin{equation*}
S^s_{k}\circ(\mathrm{perm}(X))=P_{n-k}(X) \subset R^s.
 \end{equation*}
 \end{lem}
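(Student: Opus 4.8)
The plan is to mirror the proof of Lemma 3.3 for the determinant, since the permanent of a symmetric matrix behaves the same way except that the signs in the Laplace/cofactor expansion disappear. I would prove the two inclusions separately.

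For the inclusion $S^s_k\circ\mathrm{perm}(X)\subseteq P_{n-k}(X)$ I would induct on $k$. The case $k=1$ is just the Laplace expansion of the permanent of a symmetric matrix: differentiating $\mathrm{perm}(X)$ by $y_{ij}$ yields $\mathrm{perm}(X_{\widehat i,\widehat j})$ when $i=j$, and $2\,\mathrm{perm}(X_{\widehat i,\widehat j})$ when $i\neq j$, the factor $2$ accounting for the two entries $(i,j),(j,i)$ of $X$ that carry the variable $x_{ij}$ (and hence for the squares $x_{ij}^2$ arising from $2$-cycles); in either case the result lies in $P_{n-1}(X)$. For the inductive step I would write $S^s_k\circ\mathrm{perm}(X)=S^s_1\bigl(S^s_{k-1}\circ\mathrm{perm}(X)\bigr)\subseteq S^s_1\circ P_{n-k+1}(X)$ and note, by applying the same row/column expansion to each $m\times m$ sub-permanent (itself a submatrix of a symmetric matrix, so the same bookkeeping of the coefficients $1$ and $2$ applies), that $S^s_1\circ P_m(X)\subseteq P_{m-1}(X)$.

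For the reverse inclusion I would show that each spanning element $\mathrm{perm}(X_{\widehat I,\widehat J})$ of $P_{n-k}(X)$, with $|I|=|J|=k$, is obtained from $\mathrm{perm}(X)$ by a degree-$k$ operator. Using the notation $\Delta_{(I,J)},\Delta_I,\Delta_J,M_{(I,J)-\Delta}$ exactly as in Lemma 3.3, I claim
\[
\mathrm{perm}(X_{\widehat I,\widehat J})=c\,\Bigl(\prod_{(i_r,j_r)\in\Delta_{(I,J)}}y_{i_rj_r}\Bigr)\mathrm{perm}\bigl(M_{(I,J)-\Delta}\bigr)\circ\mathrm{perm}(X),
\]
for some nonzero $c\in{\sf k}$ (a power of $2$, hence invertible since $\mathrm{char}\,{\sf k}\neq 2$). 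This is proved by induction on $k=|I|=|J|$: the case $k=1$ is the two-case computation above, and the inductive step follows by expanding $\mathrm{perm}(X_{\widehat I,\widehat J})$ along the row $i_1$ (or the column $j_1$) and invoking the hypothesis for $I'=I\setminus\{i_1\}$, $J'=J\setminus\{j_1\}$. Since the right-hand side is, up to the nonzero scalar $c$, an element of $S^s_k\circ\mathrm{perm}(X)$, this gives $P_{n-k}(X)\subseteq S^s_k\circ\mathrm{perm}(X)$ and completes the proof.

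The main obstacle is exactly the symmetry bookkeeping: each off-diagonal variable occupies two entries of $X$ and squares of variables occur in $\mathrm{perm}(X)$, so one must keep track of the factors of $2$ that differentiation produces and confirm that they never accumulate to a multiple of the characteristic — which is automatic under the hypothesis $\mathrm{char}\,{\sf k}=0$ or $>2$. A minor point to dispatch along the way is that, once $I$ and $J$ are listed in increasing order, the splitting recorded by $\Delta_{(I,J)}$ into matched diagonal pairs and an off-diagonal remainder is canonical, so the displayed formula is unambiguous.
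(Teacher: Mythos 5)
Your proposal is correct and follows essentially the same route as the paper: induction on $k$ for the inclusion $S^s_k\circ\mathrm{perm}(X)\subseteq P_{n-k}(X)$ with the base case handled by the two-case (diagonal versus off-diagonal, factor $1$ versus $2$) Laplace expansion, and the reverse inclusion via the same claim $\mathrm{perm}(X_{\widehat I,\widehat J})=c\,\prod_{(i_r,j_r)\in\Delta_{(I,J)}}y_{i_rj_r}\,\mathrm{perm}(M_{(I,J)-\Delta})\circ\mathrm{perm}(X)$ proved by induction on $|I|=|J|$ using row/column expansion. Your explicit remark that $c$ is a power of $2$ and hence nonzero because $\mathrm{char}\,{\sf k}\neq 2$ is a welcome clarification of the paper's assertion $c\neq 0$.
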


 \begin{proof}
To show the inclusion
  \begin{equation*}
S^s_{k}\circ(\mathrm{perm}(X))\subset P_{n-k}(X) \subset R^s,
 \end{equation*}

 we use induction on $k$. Let $P_{ij}$ denote the permanent of the submatrix obtained by deleting the $i$-th row and $j$-th column. For $k=1$, we have two different cases:
 
 I) for a diagonal element $y_{ii}$ we have 
 
 $$y_{ii}\circ (\mathrm{perm}(X))=P_{ii} \in P_{n-1}(X).$$
 
 II) Let $y_{ij}$ be a non-diagonal element. Without loss of generality we can consider $y_{12}$. We have $y_{12}=y_{21}$. The monomial $y_{12}^2$ appears in exactly $(n-2)!$ terms coming from $y_{12}^2\cdot (P_{12})_{21}$.

 We also have $2((n-1)!-(n-2)!)$ terms in the permanent which contain $y_{12}$ but do not contain $y_{12}^2$. These terms come from the sub-permanent obtained by deleting the first or second row.
 
 So we have
 
 $$y_{ij}\circ (\mathrm{perm}(X))=2P_{ij} \in P_{n-1}(X).$$

Assume that the above inclusion holds for $k-1$, i.e $$S^s_{k-1}\circ (\mathrm{perm}(X)) \subset P_{n-(k-1)}(X),$$
  and we want to show that it is true for $k$. We have $$S^s_k\circ \mathrm{perm}(X)=S^s_1S^s_{k-1}\circ \mathrm{perm}(X)\subset S^s_1\circ P_{n-k+1}(X)\subset P_{n-k}(X),$$
 
as required. Next we want to show  the opposite inclusion
 \begin{equation*}
S^s_{k}\circ(\mathrm{perm}(X))\supset P_{n-k}(X) \subset R^s,
 \end{equation*}

Let $P_{\widehat I,\widehat J}(X), I=\{i_1,\ldots ,i_k\}, J=\{j_1,\ldots ,j_k\}, 1\le i_1\le i_2\le \cdots \le i_k\le n, 1\le j_1\le j_2\le \cdots \le j_k\le n$ be the $(n-k)\times (n-k)$ permanent of the submatrix of $X$ one obtains by deleting the $I$ rows and $J$ columns of $X$. Let $$\Delta_{(I,J)}=\{(i_r,j_r)|i_r\in I, j_r\in J \text{ and } i_r=j_r\}.$$  Let $\Delta_I=\{i_r|(i_r,j_r)\in \Delta_{(I,J)}\}$ and $\Delta_J=\{j_r|(i_r,j_r)\in \Delta_{(I,J)}\}$. Let $P_{(I,J)-\Delta}$ be the sub matrix of Y with the rows $I-\Delta_I$, and the columns $J-\Delta_J$.

\vspace{0.1in}

\underline{Claim:}

$$P_{\widehat{I},\widehat{J}}= c \prod_{(i_r,j_r)\in\Delta_{(I,J)}}y_{i_rj_r} \mathrm{perm}(P_{(I,J)-\Delta})  \circ (\mathrm{perm}(X))$$ where $c\not= 0\in \sf{k}.$

To prove the claim we use induction on $|I|=|J|=k$, the cardinality of the sets $I$ and $J$.  First we show the claim is true for $k=1$. Let $I=\{i_1\}$ and $J=\{j_1\}$. We have two cases

I. $i_1=j_1$ so $y_{i_1j_1}$ is a diagonal element and we have

\begin{equation*}
P_{\widehat{I},\widehat{J}}=y_{i_1j_1}\circ (\mathrm{perm}(X)).
\end{equation*}

II. $i_1\neq j_1$ so we have

\begin{equation*}
y_{i_1j_1}\circ (\mathrm{perm}(X))=2P_{\widehat{I},\widehat{J}}.
\end{equation*}

So for $k=1$ the claim holds. Assume that the claim holds for every $I$ and $J$ with $|I|=|J|=k-1$ and we want to show that the claim is also true for $I$ and $J$ with $|I|=|J|=k$.

Let $I=\{i_1,\ldots ,i_k\}$ and $J=\{j_1,\ldots ,j_k\}$. 

Let $I'=I-\{i_1\}$ and $J'=J-\{j_1\}$. We have $|I'|-|J'|=k-1$ so by the induction assumption we have 

\begin{equation*}
P_{\widehat{I'},\widehat{J'}}= c \prod_{(i_r,j_r)\in\Delta_{(I',J')}}y_{i_rj_r} \mathrm{perm} (P_{(I',J')-\Delta})  \circ (\mathrm{perm}(X))
\end{equation*}

Writing the Laplace expansion of the permanent using row $i_1$ or column $j_1$ for $P_{\widehat{I},\widehat{J}}$, we get 

$$P_{\widehat{I},\widehat{J}}= c \prod_{(i_r,j_r)\in\Delta_{(I,J)}}y_{i_rj_r} \mathrm{perm} (P_{(I,J)-\Delta})  \circ (\mathrm{perm}(X)),$$ where $c\not=0 \in \sf{k}$. Hence $P_{\widehat{I},\widehat{J}}\in S^s_{n-k} \circ (\mathrm{perm}(X))$.

 \end{proof}
 
 \begin{lem}\label{lem:symm-perm-dimP2}  Let $X$ be a generic symmetric matrix. We have
 
 $$H(S^s/\mathrm{Ann}(\mathrm{Perm}(X)))_{\sf{k}}=\frac{{n\choose k} ({n\choose k}+1)}{2}.$$
 
 So the length $\dim S^s/\mathrm{Ann}(\mathrm{Perm}(X))$ satisfies the following equation
$$\dim S^s/\mathrm{Ann}(\mathrm{Perm}(X))=\frac{{2n\choose n}+2^n}{2}.$$
\end{lem}
 
 \begin{proof}
 Let $P_k$ denote the space of $k\times k$ permanents of the $n\times n$ generic symmetric matrix $X$.  Using Lemma \ref{lem:Scircleperm-symm} we have 
  
 $$H(S^s/\mathrm{Ann}(\mathrm{Perm}(X)))_{\sf{k}}=\dim_{\sf k} P_k={{n\choose k}\choose 2}=\frac{{n\choose k} ({n\choose k}+1)}{2}.$$
 
 Hence we have
 \begin{equation}\label{dim-symm-perm-hilbert}
 \dim S^s/\mathrm{Ann}(\mathrm{Perm}(X))=\sum_{k=0}^{k=n} \frac{{n\choose k} ({n\choose k}+1)}{2}=\frac{{2n\choose n}+2^n}{2}.
 \end{equation}
 
 A combinatorial proof of the second equality in the Equation \ref{dim-symm-perm-hilbert} can be found in \cite{ST}, Example 1.1.17.
 
 \end{proof}
 
Table \ref{table-symm-hilb-perm} gives the Hilbert functions $H(S^s/\mathrm{Ann}(\mathrm{Perm}(X))$ in Equation \ref{dim-symm-perm-hilbert} for a generic $n\times n$ symmetric matrix $X$, for $n\le 8$. The entries in Table \ref{table-symm-hilb-perm} are at least as big as the corresponding entries of Table \ref{table:determinant-symmetric-hilbert}.
 
 \begin{table}[h]
\begin{center}
\caption{The Hilbert sequence of the Permanent of the generic symmetric matrix}\label{table-symm-hilb-perm}

\begin{tabular}{l*{9}{c}r}
\hline
n=2 & 1 & 3 & 1  \\
\hline
n=3           & 1 & 6 & 6 &  1 \\
\hline
n=4 & 1 & 10 & 21 & 10 & 1 \\
\hline
n=5 & 1 & 15 & 55 & 55 & 15 & 1 \\
\hline
n=6 & 1 & 21 & 120 & 210 & 120 & 21 & 1\\
\hline
n=7 & 1 & 28 & 231 & 630 & 630 & 231 & 28 & 1\\
\hline
n=8 & 1 & 36 & 406 & 1596 & 2485 & 1596 & 406 & 36 & 1\\

\hline

\end{tabular}
\end{center}
\end{table}

Analogous to Lemma \ref{lem:det-symm-V-degree2} we have
\vspace{0.2in}
\begin{lem}\label{lem:perm-symm-W-degree2} For the generic symmetric $n\times n$ matrix $X$, we have
\begin{equation*}
\langle Q' \rangle=\mathrm{Ann}(P_2)\cap S^s_2=(\mathrm{Ann}(\mathrm{perm} (X)))_2.
\end{equation*}

\end{lem}

\begin{proof}
By the Lemma \ref{lem:Scircleperm-symm} we have 

\begin{equation*}
\mathrm{Ann}(S^s_{n-2}\circ(\mathrm{perm}(X)))=\mathrm{Ann}(P_2(X)).
\end{equation*}

Remember $\langle Q' \rangle$ is the span of $Q'$, a set of degree two polynomials (Definition \ref{def:Wperm-symm}). By the Proposition \ref{prop:gens-deg2-perm-symm} we have

\begin{equation*}
(\mathrm{Ann}(\mathrm{perm}(X)))_2\supset \langle Q' \rangle
\end{equation*}

By the Remark \ref{remark:introIK} we have

\begin{equation*}
(\mathrm{Ann}(\mathrm{perm}(X)))_2=(\mathrm{Ann}(S^s_{n-2}\circ(\mathrm{perm}(X))))_2\subset \mathrm{Ann}(P_2(X))
\end{equation*}

Hence we have

\begin{equation*}
\langle Q' \rangle \subset \mathrm{Ann}(P_2).
\end{equation*}

On the other hand, using Lemma \ref{lem:symm-perm-dimP2} we have 

\begin{equation*}
\dim \langle Q' \rangle=n^2+{n\choose 2}+n\cdot{{n-1}\choose 2}={{\frac{n^2+n}{2}}+1\choose 2}-\frac{{n\choose 2}({n\choose 2}+1)}{2}=\dim S^s_2-\dim P_2(X).
\end{equation*}

So we have the equality

\begin{equation*}
\langle Q' \rangle=\mathrm{Ann}(P_2)\cap S^s_2.
\end{equation*}

By the Proposition \ref{prop:gens-deg2-perm-symm}, Lemmas \ref{lem:symm-perm-dimw} and \ref{lem:Scircleperm-symm} we have
\begin{equation*}
(\mathrm{Ann}(\mathrm{perm}(X)))_2\subset \mathrm{Ann}(P_2(X))\subset \langle Q' \rangle.
\end{equation*}

So we have

\begin{equation*}
\langle Q' \rangle=(\mathrm{Ann}(\mathrm{perm}(X)))_2.
\end{equation*}

\end{proof}

The apolar ideal of the permanent of the generic symmetric matrix is not generated in degree two in general. For $n=2,3,4,5$ the apolar ideal is generated in degree 2 with the generators $\langle Q' \rangle$ introduced in Proposition \ref{prop:gens-deg2-perm-symm}. We will show that, starting from $n=6$ there are generators of degree 3 in the annihilator ideal (Lemma \ref{prop:gens-deg3-perm-symm}). Here, for the readers' convenience we summarize the information/observations we have about these examples:

\begin{itemize}
\item For $n=2$ we have $$
X=\left(%
\begin{array}{cc}
 
  a & b \\
  b & c \\
  
 \end{array}%
\right).$$

We have $\mathrm{perm}(X)=b^2+ac$. The apolar ideal $I=(C^2,BC,B^2-2AC,AB ,A^2)$. The corresponding Hilbert sequence is $H=(1,3,1)$

\item For $n=2,3,4,5$ the apolar ideal is generated by $Q'$ in degree 2.

\vspace{0.2in}

\item We show that for $n=6,7,8$ the apolar ideal has some degree 3 generators:

The terms $y_{i_1i_2}y_{i_3i_4}y_{i_5i_6}$ that appear in the degree 3 polynomials of the apolar ideal for $n=6,7,8$ follow these rules:

$$i_1\leq i_3 \leq i_5,$$

$$i_1\leq i_2, i_3\leq i_4, i_5\leq i_6.$$

So the terms that appear in the degree three generators of the apolar ideals are exactly the terms that appear in the hafnians of the $6\times 6$ symmetric sub-matrices.

\item For each $6\times 6$ symmetric submatrix $X$, we have five degree three homogeneous polynomials among the generators of $\mathrm{Ann}(\mathrm{perm}(X))$. These five polynomials are linearly independent since they involve different variables. Three of them have six monomial terms and two of them have eight monomial terms. So the number of degree three generators of $\mathrm{Ann}(\mathrm{perm}(X))$ is equal to $5\cdot{n\choose 6}$. We write these five degree-three forms
below for $n=6$ in Lemma \ref{prop:gens-deg3-perm-symm} that follows.
 \end{itemize}
\vspace{0.2in}
\begin{lem} \label{prop:gens-deg3-perm-symm} Let $X$ be a generic symmetric $n\times n$ matrix. For each symmetric $6\times 6$ submatrix $\mathfrak M$ of $X$ we have five minimal generators of degree three in the apolar ideal of the permanent as listed below:

$$\mathfrak M=
\left(%
\begin{array}{cccccc}
 
  a & b & c & d & e & f\\
  b & g & h & i & j & k\\
  c & h & l & m & n & o\\
  d & i & m & p & q & r\\
  e & j & n & q & s & t\\
  f & k & o & r & t & u\\
  
 \end{array}%
\right),$$

$$F_1= EIO-DJO-EHR+CJR+DHT-CIT$$
$$F_2=DKN-DJO-CKQ+BOQ+CJR-BNR$$
$$F_3=FIN-DJO-FHQ+BOQ+CJR-BNR+DHT-CIT$$
$$F_4=EKM-DJO-CKQ+BOQ-EHR+CJR+DHT-BMT$$
$$F_5=FJM-DJO-FHQ+BOQ+DHT-BMT$$

These $5\cdot{n\choose 6}$ polynomials of $Y$ annihilate the permanent of the matrix $X$. 

\end{lem}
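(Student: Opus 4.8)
The plan is to reduce the assertion ``$F_\ell\circ\mathrm{perm}(X)=0$'' (for each $\ell$ and each $6\times 6$ symmetric submatrix) to a bounded combinatorial check, and then to carry that check out. First I would apply Remark~1.2 together with Lemma~3.15 taken with $k=n-3$: since $S^s_{n-3}\circ\mathrm{perm}(X)=P_3(X)$ is spanned by all $3\times 3$ sub-permanents of $X$, and the pairing $S^s_{n-3}\times R^s_{n-3}\to{\sf k}$ is perfect, $F_\ell\circ\mathrm{perm}(X)=0$ holds if and only if $F_\ell$ annihilates every $3\times 3$ sub-permanent $\mathrm{perm}(X_{A,B})$, i.e. the permanent of the submatrix on rows $A$ and columns $B$ with $|A|=|B|=3$. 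Thus the degree-$n$ problem collapses, exactly as in the determinant case, to a problem about $3\times 3$ permanents.

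Next I would read off from the displayed matrix that every monomial occurring in any $F_\ell$ has the form $y_{j_1j_2}y_{j_3j_4}y_{j_5j_6}$ where $\{j_1,j_2\},\{j_3,j_4\},\{j_5,j_6\}$ is a perfect matching of $\{1,\dots,6\}$ --- so all six indices are distinct, and these are exactly the monomials occurring in the $6\times 6$ Hafnian. Such a monomial can therefore act nontrivially on $\mathrm{perm}(X_{A,B})$ only if some term of $\mathrm{perm}(X_{A,B})$ involves six distinct indices, all lying in $\{1,\dots,6\}$; this forces $A$ and $B$ to be complementary $3$-subsets, $A\sqcup B=\{1,\dots,6\}$. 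There being only $\binom{6}{3}/2=10$ such partitions, only these ten cases remain.

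For the final step I would fix a partition $\{1,\dots,6\}=A\sqcup B$ and note that a monomial $y_{j_1j_2}y_{j_3j_4}y_{j_5j_6}$ of $F_\ell$ survives the action on $\mathrm{perm}(X_{A,B})$ precisely when each of its three pairs meets both $A$ and $B$ (the matching ``crosses'' this partition), in which case it contributes exactly its coefficient, $\pm 1$. Hence $F_\ell\circ\mathrm{perm}(X_{A,B})$ equals the signed sum of those matchings of $F_\ell$ that cross $A\sqcup B$, and the lemma reduces to the claim that this signed sum vanishes for each of the ten partitions. This is a finite bookkeeping check --- one lists, for each of the $15$ perfect matchings of $\{1,\dots,6\}$, which of the ten partitions it crosses, and then observes that each $F_\ell$ is a cycle in the resulting incidence pattern; equivalently it can be confirmed directly in Macaulay~2 as in Proposition~3.12. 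I would then record the two easy remaining points: $F_1,\dots,F_5$ are ${\sf k}$-linearly independent, since each contains a matching-monomial occurring in no other $F_i$; and forms attached to distinct $6\times 6$ submatrices share no monomial, since the set of six indices appearing in any monomial of $F_\ell$ is precisely the index set of the submatrix. Thus the list genuinely consists of $5\binom n6$ linearly independent annihilators, and their minimality as generators then follows from the Hilbert-function count obtained from the triangularity argument developed below.

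The main obstacle is the sign bookkeeping of the last step, and especially the two eight-term forms $F_3$ and $F_4$: for several of the ten partitions these have \emph{four} crossing matchings --- two of each sign --- rather than two, so one must verify that the signs have been assigned precisely so that they cancel in pairs. Everything before that, namely the reduction to $3\times 3$ permanents and then to the ten partitions, is formal and uses only Remark~1.2 and Lemma~3.15.
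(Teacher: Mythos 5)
Your proposal is correct, but it follows a genuinely different route from the paper's. The paper proves the annihilation claim by induction on $n$: the base case $n=6$ is simply asserted as a direct check, and for $n>6$ it observes that the fifteen matching monomials supported on the six chosen indices occur in no other symmetric $6\times 6$ submatrix, so a Laplace-expansion argument in the spirit of the Hafnian case, Proposition 3.1(d), reduces everything to the base case; independence of the $F_i$ modulo $(W)_3$ is dispatched in one line. You instead invoke Remark 1.2 together with Lemma 3.15 at $k=n-3$ to reduce, uniformly in $n$ and with no induction, to the statement that each $F_\ell$ annihilates every $3\times 3$ sub-permanent; since every monomial of $F_\ell$ is a perfect matching of the six chosen indices, only the ten complementary splittings $A\sqcup B$ of that index set can contribute, and $F_\ell\circ\mathrm{perm}(X_{A,B})$ is exactly the signed number of matchings of $F_\ell$ that cross the splitting (each crossing matching occurs once in the sub-permanent, with coefficient $1$, and differentiation contributes $1$ because the six indices are distinct). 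What your route buys is a small, structured finite verification --- a $5\times 10$ table of signed crossing counts, correctly anticipating that $F_3$ and $F_4$ have four crossing matchings for some splittings --- in place of expanding a $6\times 6$ symmetric permanent, and it works for all $n\ge 6$ at once; what the paper's route buys is brevity, at the cost of hiding the actual content in the unverified ``easy to check'' computation at $n=6$. Your leaving the ten-splitting sign check as bookkeeping (or Macaulay~2, as in Proposition 3.12) is no weaker than the paper's own treatment of its base case, and both you and the paper defer genuine minimality of the $F_\ell$ as generators to the later triangularity and dimension-count argument in Section 3.2, so no new gap is introduced there.
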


\begin{proof}
We use induction on $n$. For $n=6$ it is easy to check that $F_1,\ldots,F_5$ annihilate the permanent of $X$. 
So we assume that for all integer values less than $n$ we have that all the five polynomials coming from the symmetric $6\times 6$ submatrices annihilate the permanent of $X$. We want to show this for $n$. Let $N$ be a $6\times 6$ symmetric submatrix of $Y$, involving the rows and the columns $i_1,\ldots ,i_6$. If a monomial term $\mu$ in the permanent of $X$ is such that $\xi (\mu)\in S^s$ does not contain any of the 15 degree three monomials in the Hafnian of $N$, then $F_1,\ldots,F_5$ annihilate $\mu$. Suppose on the other hand that  $\xi(\mu)$ contains one of the 15 monomials in the Hafnian of $N$.  Since these monomials don't appear in any other $6\times 6$ Hafnian of $Y$, by the first induction step we have shown that $F_1,\ldots,F_5$ annihilate $\mu$. We conclude that $F_1,\ldots, F_5$ annihilate the permanent of $X$. These generators are linearly independent mod $(I_2)_3=(Q')_3$, since they involve different variables, so they are part of a minimal generating set for the apolar ideal of the permanent of $X$.

\end{proof}
The five polynomials $F_1,\ldots ,F_5$ are further discussed in Example \ref{ex:5.6}.

\vspace{0.2in}
\begin{lem} \label{lem:perm-sym-unacceptable-trangularity}
We can write each unacceptable monomial of degree $k$ ($2\leq k\leq n$), as an explicit element of $S^s_{k-2}\langle Q'\rangle$, where $\langle Q'\rangle$ is the space defined in the Definition \ref{def:Wperm-symm}.

\end{lem}

\begin{proof}
We use induction on $k$. For $k=2$ the claim is obviously true.  To show that the claim is true for $k=3$, we need to show that the space $U_3$ of unacceptable monomials in $S^s_3$ is in $S^s_1\langle Q'\rangle$. The unacceptable monomials of degree 3 for the $n\times n$ generic symmetric matrix have one of the following forms:

(a) Unacceptable of the form $x^2y$ where $x$ is a diagonal element. The number of these monomials is $n(\frac{n(n+1)}{2})$.

(b) Unacceptable of the form $xyz$ where $x$ is a diagonal element, $y\neq x$ in the same row or column with $x$ and $z\neq x$. The number of these monomials is $n(n-1)({\frac{n(n+1)}{2}}-1)$.

(c) Unacceptable of the form $xyz$ where $x,y,z$ are non diagonal elements from the same row or column (can be equal to each other). The  number of these monomials is ${n\choose 1}{n-1+3-1\choose 3}$.

Unacceptable monomials of type (a) or (b) are multiples of unacceptable monomials of degree 2, so they are in the space $S^s_1U_2$. So we only need to show that the degree 3 unacceptable monomials of type (c) are in $S^s_1\langle Q'\rangle$. The 3 nondiagonal elements in the same row or column of the matrix $X$ are from a symmetric $4\times 4$ sub-matrix. So without loss of generality we show that a degree 3 monomial of type (c) from the following sub-matrix is in $S^s_1\langle Q'\rangle$. Let $A^s$ be the $4\times 4$ symmetric sub-matrix of a generic symmetric $n\times n$ matrix,$$
A^s=\left(%
\begin{array}{cccc}
 
  a & b & c & d\\
  b & e & f & g\\
  c & f & h & i\\
  d & g & i & j\\

  \end{array}%
\right),$$

and $D^s$ be the matrix

$$
D^s=\left(%
\begin{array}{cccc}
 
  A & B & C & D\\
  B & E & F & G\\
  C & F & H & I\\
  D & G & I & J\\

  \end{array}%
\right)$$

Monomials of type (c) in $S^s_3$ can have one of the following forms. In each case we prove the claim for one monomial in the given form. The proof for any other monomial is similar to what we show below.

(I) All three non-diagonal variables are distinct. The monomials of the form $\eta_1=BCD$ is a degree three unacceptable monomial of type (c). We have $AF-BC\in \langle Q'\rangle$ (since it is a minor with one diagonal element), so we have $(AF-BC)\circ \mathrm{perm}(A^s)=0.$ Hence, $$D(AF-BC)\circ  \mathrm{perm}(A^s)=0.$$ We also know that $DAF\in S^s_1U_2\subset U_3$ so $DAF\circ   \mathrm{perm}(A^s)=0.$ We therefore have $\eta_1=BCD=-D(AF-BC)(\text{mod }S^s_1U_2)$. So we have $\eta_1\in S^s_1 \langle Q'\rangle$. \par

(II) There are two distinct non-diagonal variables. Consider the monomial $\eta_2=B^2C$, also of type (c). We have $AF-BC\in \langle Q'\rangle$ (since it is a minor with one diagonal element), so we have $(AF-BC)\circ  \mathrm{perm}(A^s)=0.$ Hence, $$B(AF-BC)\circ  \mathrm{perm}(A^s)=0.$$ We also know that $BAF\in S^s_1U_2\subset U_3$ so $B^2C\circ   \mathrm{perm}(A^s)=0.$ We therefore have $\eta_2=B^2C=-B(AF-BC)(\text{mod }S^s_1U_2)$. So we have $\eta_2\in S^s_1\langle Q'\rangle$. \par

(III) There is only one non-diagonal variable. Consider the monomial $\eta_3=B^3$, also of type (c). We have $-B^2+2AE\in \langle Q'\rangle$ (since it is a diagonal minor with the coefficient 2), so we have $(-B^2+2AE)\circ \mathrm{perm}(A^s)=0.$ Hence, $$B(-B^2+2AE)\circ \mathrm{perm}(A^s)=0.$$ We also know that $BAE\in S^s_1U_2\subset U_3$ so $B^3\circ \mathrm{perm}(A^s)=0.$ We therefore have $\eta_3=B^3=-B(-B^2+2AE)(\text{mod }S^s_1U_2)$. So we have $\eta_3\in S^s_1\langle Q'\rangle$. \par

So the lemma is true for $k=3$ and we have $U_3\subset S^s_1\langle Q'\rangle$. Let $M$ denote the subspace of $\langle Q'\rangle $ generated by binomials of type (b) and (c) as defined in Proposition \ref{prop:gens-deg2-perm-symm}. We have shown that $U_3\subset S^s_1(U+M)$.

Finally assume that $k\ge 4$ and the lemma is true for all integers less than $k$. We want to show that the claim is true for $k$. Let $\mu=\mu_1\mu_2 \cdots \mu_k$ be an unacceptable monomial of degree $k$. We can write $\mu$ such that $\mu_2 \cdots \mu_k$ is an unacceptable monomial of degree $k-1$ so we have
$$\mu=\mu_1(\mu_2 \cdots \mu_k)\in S^s_1(S^s_{k-3}\langle Q'\rangle)=S^s_{k-2} \langle Q'\rangle,$$
and the lemma is true also for $k$.
\end{proof}
\vspace{.2in}

\begin{defi}\label{def:perm-symmW+} Let $H$ be the ideal generated by the degree three polynomials listed in the Lemma \ref{prop:gens-deg3-perm-symm}. Let $( Q')^+=(Q')+H$ denote the ideal generated by the degree 2 polynomials defined in Proposition \ref{prop:gens-deg2-perm-symm} and the degree 3 polynomials corresponding to the $6\times 6$ symmetric submatrices discussed in the Lemma \ref{prop:gens-deg3-perm-symm}.

\end{defi}

\begin{itemize}
\item The number of $k\times k$ permanents of the $n\times n$ generic symmetric matrix is $$\frac{1}{2}{n\choose k}\cdot {n\choose k}+\frac{1}{2}{n\choose k}, $$ choosing two from a subset of $n\choose k$ elements. These are linearly independent since they involve different variables.

\item Let $\{ P_k\}$ be the set of all $k\times k$ permanents of $X$.
\item As in the determinant case, let $A_k$ be the set of acceptable monomials in $S^s_k$.
\item Let $\iota:R^s \rightarrow S^s,\iota(x_{ij})=y_{ij}$, and let $E_k$ be the subset of $A_k$ defined by $$\{\iota(\mu)|\mu \text{ an initial monomial (in Lex. order) of some element of } \{P_k\}\}.$$
\item Let $E'_k= A_k\backslash E_k$ be the complementary set to $E_k$ in $A_k$.
\item For each $\mu \in A_k$, let $A_{>\mu}$, denote the subset of elements $\nu \in A_k$, such that $\nu>\mu$ in the lexicographic order of $S^s$.
\item Let $[a_1,\ldots ,a_k|b_1,\ldots ,b_k]_p$ be the permanent of the $k\times k$ sub matrix with the rows $\{a_1,\ldots ,a_k\}$ and the columns $\{b_1,\ldots ,b_k\}$. Recall that for a monomial $$m=y_{a_1b_1}\cdots y_{a_kb_k}=(a_1,\ldots ,a_k|b_1,\ldots ,b_k),$$
we call a pair $(b_i,b_j)$, with $i<j$, a reversal pair if $b_i\ge b_j$. The initial term of the $k\times k$ permanent $[a_1,\ldots ,a_k|b_1,\ldots ,b_k]_p$ is the term $y_{i_1,j_1}y_{i_2,j_2}\cdots y_{i_k,j_k}$ such that $i_1\leq i_2 \leq \cdots \leq i_k$ and $j_1\leq j_2 \leq \cdots \leq j_k$ where $\{i_1,\ldots ,i_k\}=\{a_1,\ldots ,a_k\}$ and $\{b_1,\ldots,b_k\}=\{j_1,\ldots ,j_k\}$.

\end{itemize}

\vspace{.2in}
\begin{prop} \label{prop:main-triangularity-perm-symm}Each acceptable monomial in $E'_k$ ($3\leq k\leq n$), is the initial monomial (in the reverse Lex. order) of an element of $( Q')^+_k$. 
\end{prop}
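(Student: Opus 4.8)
The plan is to induct on $k$, following the blueprint of Proposition 3.8. For $\mu\in A_k$ I will say that \emph{$\mu$ is caught by $g$} if $g\in W^+_k$, $\mu$ is the reverse-lex initial monomial of $g$, and every other term of $g$ lies in $A_{>\mu}$; the assertion to be proved is that every $\mu\in E'_k$ is caught by some $g\in W^+_k$. As in Proposition 3.8 and Lemma 3.19, once the base case $k=3$ is in hand the inductive step is routine, and the one genuinely new ingredient relative to the determinant is a single family of degree-three relations.

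For the base case $k=3$, I would run through the acceptable monomials $\mu=y_{i_1i_2}y_{i_3i_4}y_{i_5i_6}$ that are not in $E_3$, organized by the number of distinct indices ($6$, $5$, $4$, $3$), exactly as in Proposition 3.8. When $\mu$ involves at most five distinct indices it sits in an $m\times m$ symmetric submatrix with $m\le 5$; since $\mu\notin E_3$ it carries a reversal pair, and — by the same maneuvers used there and in Lemma 3.19 — one peels off a variable $y_{ab}$ and catches $\mu$ inside $S^s_1W\subset W^+_3$ using a $2\times 2$ minor of $W$ from Proposition 3.13 (one with a diagonal entry, or a diagonal minor $y_{ij}^2-2y_{ii}y_{jj}$), taking care that the permanent's sign conventions place the correction term into $A_{>\mu}$. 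When $\mu$ involves six distinct indices it lives in a $6\times 6$ symmetric submatrix, and here the degree-two relations no longer suffice: one must use the five forms $F_1,\dots,F_5$ of Lemma 3.18, and I would verify by a finite computation (it suffices to do indices $\{1,\dots,6\}$ and relabel) that over each $6\times 6$ submatrix these five forms have five distinct reverse-lex initial monomials, equal precisely to the five six-distinct-index acceptable monomials of that submatrix that do not lie in $E_3$. Since $F_1,\dots,F_5\in H\subset W^+$, this catches all remaining $\mu$.

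For the inductive step, assume $k\ge 4$. Given $\mu\in E'_k$, write it with increasing row indices; being acceptable but not in $E_k$, it carries a reversal pair, which occupies only two of its $k\ge 4$ factors, so some other factor $y_{ab}$ can be removed leaving $\mu_1:=\mu/y_{ab}$ acceptable and still carrying a reversal pair, hence $\mu_1\in E'_{k-1}$ (the bookkeeping is the same as in Proposition 3.8). By the induction hypothesis $\mu_1$ is caught by some $g\in W^+_{k-1}$, and then, the term order being multiplicative, $\mu=y_{ab}\mu_1$ is caught by $y_{ab}g\in W^+_k$. This closes the induction. The step that needs real attention is the six-distinct-index part of the base case: for the determinant this was disposed of at degree two by a single $4\times 4$ Hafnian lying in $V$ (Proposition 3.8), but the $4\times 4$ Hafnian does not annihilate the permanent, so the degree-three forms $F_i$ are genuinely required, and the whole argument hinges on their reverse-lex initial monomials matching, one for one, the six-distinct-index members of $E'_3$.
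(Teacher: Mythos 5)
Your overall architecture is the same as the paper's: induction on $k$, a base case $k=3$ organized by the number of distinct indices, degree-two elements of $W$ for the monomials involving at most five indices, the five forms $F_1,\dots,F_5$ for the six-index monomials, and the peel-off-a-variable inductive step. The gap is in the one quantitative claim you add for the six-index case. With the paper's stated definition of $E_k$ (initial monomials of the individual permanents in $\{P_k\}$), a fixed $6\times 6$ block has $15$ acceptable monomials on six distinct indices (the terms of its Hafnian), and only the $C_3=5$ ``sorted pairings'' $y_{14}y_{25}y_{36}$, $y_{13}y_{25}y_{46}$, $y_{13}y_{24}y_{56}$, $y_{12}y_{35}y_{46}$, $y_{12}y_{34}y_{56}$ are initial monomials of $3\times 3$ permanents; so ten, not five, of these monomials lie in $E'_3$. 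The forms $F_1,\dots,F_5$ have only five distinct reverse-lex initial monomials, namely $y_{15}y_{24}y_{36}$, $y_{14}y_{26}y_{35}$, $y_{16}y_{24}y_{35}$, $y_{15}y_{26}y_{34}$, $y_{16}y_{25}y_{34}$, so they cannot catch the ten one-for-one. Worse, a monomial such as $\mu=y_{14}y_{23}y_{56}$ cannot be caught at all: every term of every generator in $W$ either contains a diagonal variable or has a repeated index, so no element of $S^s_1W$ contains a square-free monomial on six distinct indices; hence the matching-type terms of any $g\in W^+_3$ come only from $c_1F_1+\cdots+c_5F_5$ of that block, and forcing the lex-smaller matchings $y_{15}y_{24}y_{36}$, $y_{15}y_{26}y_{34}$, $y_{16}y_{24}y_{35}$, $y_{16}y_{25}y_{34}$, $y_{14}y_{26}y_{35}$ to vanish kills $c_1,\dots,c_5$ and with them the coefficient of $\mu$. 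So the finite verification you propose would fail as stated.

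The repair, which is also what the dimension count in the corollary following this proposition actually needs, is to read $E_k$ as the set of lex-initial monomials of the \emph{space} $P_k$ (all elements of the span of the $k\times k$ permanents), not of the permanents individually; the two readings differ because distinct permanents can share an initial monomial (e.g.\ the permanents with rows $\{1,3,5\}$, columns $\{2,4,6\}$ and rows $\{1,4,5\}$, columns $\{2,3,6\}$ both lead with $y_{12}y_{34}y_{56}$). Under that reading your one-for-one claim is correct and essentially forced: inside a $6\times 6$ block the ten six-index permanents are independent in the $15$-dimensional span of matchings, $F_1,\dots,F_5$ span their annihilator there, and echelon-form duality identifies the complement of the leading-monomial set of the permanent space with the set of trailing (reverse-lex initial) monomials of the $F_i$. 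With that adjustment your argument goes through and coincides with the paper's proof, which treats only the single example $\mu=y_{16}y_{24}y_{35}$ in this case and calls the rest similar; you should also note that your inductive step (like the paper's) silently assumes the factor $y_{ab}$ can always be removed so that the quotient is still acceptable and still non-initial, which deserves a sentence of justification.
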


\begin{proof}
We use induction on $k$, and start with $k=3$.  Let $\mu$ be a degree 3 acceptable monomial which is not the initial term of any $3\times 3$ permanent in the lexicographic order. The acceptable monomials, $x_{i_1i_2}x_{i_3i_4}x_{i_5i_6}$, of degree 3 for the $n\times n$ generic symmetric matrices can be listed as follows:

(a) All 6 indices are distinct.

(b) There is one repeated index.

(c)  There are 2 repeated indices.

(d) There are 3 repeated indices.

We discuss each of the above types separately in each case for one monomial of the given form. The proof for any other monomial of the given type is similar to what we show.

(a) all 6 indices are distinct $x_{i_1i_2}x_{i_3i_4}x_{i_5i_6}$, $(i_1,i_3,i_5|i_2,i_4,i_6)$. Without loss of generality we can assume these indices are 1,2,3,4,5,6. In order to have a non-initial monomial of this kind, it is enough to have at least one reversal pair. For example in $(1,2,3|6,4,5)$, $6\ge 4$ so $(6,4)$ is a reversal pair. 

In the doset minor $(i_1,i_3,i_5|i_2,i_4,i_6)$, without loss of generality we may arrange that
$$i_1<i_3<i_5.  $$
Now assume   $i_1=1,i_3=2$, $i_5=3$ $i_2=4,i_4=5$ and $i_6=6$ then $x_{14}x_{25}x_{36}$ is the initial term in the corresponding $3\times 3$ permanent using the lexicographic order. So in order to have a non-initial monomial we need to assign to  $i_2,i_4$ and $i_6$ the numbers 4,5 and 6 but not in order. So we have at least one reversal pair $(i_j,i_k),(j<k)$ where $j,k\in\{2,4,6\}$ such that $i_j\ge i_k$.

Next we look at the corresponding $6\times 6$ symmetric sub-matrix. we have 
$$
X=\left(%
\begin{array}{cccccc}
 
  a & b & c & d & e & f \\
  b & g & h & i & j & k  \\
  c & h & l & m & n & o\\
  d & i & m & p & q & r\\
  e & j & n & q & s & t\\
  f & k & o & r & t & u\\
  
   \end{array}%
\right),$$

$$
Y=\left(%
\begin{array}{cccccc}
 
   A & B & C & D & E & F \\
  B & G & H & I & J & K  \\
  C & H & L & M & N & O\\
  D & I & M & P & Q & R\\
  E & J & N & Q & S & T\\
  F & K & O & R & T & U\\

   \end{array}%
\right),$$

Consider a degree three non-initial monomial, the terms coming from 6 distinct rows and columns. A general example of this kind is $(1,2,5|6,4,3)$. But we have:

$$
FIN-DJO-FHQ+BOQ+CJR-BNR+DHT-CIT\in H,
$$

so given $\mu=FIN$,  we have

$$f_{\mu}=FIN-DJO-FHQ+BOQ+CJR-BNR+DHT-CIT \in ( Q')^+_3,$$ 

where 

$$-DJO-FHQ+BOQ+CJR-BNR+DHT-CIT\in A_{>\mu}.$$

(b) There is one repeated index. Without loss of generality we can assume the indices are 1,2,3,4,5, with one of them repeated. In order to have a non-initial example of this kind, it is enough to have 1 reversal pair. For example in $(1,2,3|4,1,5)$, $4\ge 1$.  We can form a $5\times 5$ symmetric matrix with these rows and columns,

$$
X=\left(%
\begin{array}{ccccc}
 
  a & b & c & d & e \\
  b & f & g & h & i \\
  c & g & j & k & l \\
  d & h & k & m & n\\
  e & i & l & n & o \\

   \end{array}%
\right),$$

$$
Y=\left(%
\begin{array}{ccccc}
 
  A & B & C & D & E \\
  B & F & G & H & I \\
  C & G & J & K & L \\
  D & H & K & M & N\\
  E & I & L & N & O \\

   \end{array}%
\right),$$

Now consider the monomial $\mu=y_{14}y_{21}y_{35}=BDL$ as an acceptable monomial of type (b). We have the minor $AH-BD\in W$. Given $\mu=BDL$, $f_{\mu}=-L(AH-BD)\in S^s_1(Q')\subset (Q')^+_3, \text{ where } AHL \in A_{>\mu}.$

(c) There are 2 repeated indices, Without loss of generality we can assume the indices are 1,2,3,4, with two of them repeated. In order to have a non-initial example of this kind, it is enough to have 1 reversal pair. For example in $(1,2,3|2,1,4)$, $2\ge 1$. We can form a $4\times 4$ symmetric matrix with these rows and columns,

$$
X=\left(%
\begin{array}{cccc}
 
  a & b & c & d  \\
  b & e & f & g \\
  c & f & h & i \\
  d & g & i & j \\

   \end{array}%
\right),$$

$$
Y=\left(%
\begin{array}{cccc}
 
  A & B & C & D  \\
  B & E & F & G \\
  C & F & H & I \\
  D & G & I & J \\

   \end{array}%
\right),$$

and consider the monomial $\mu=y_{12}y_{21}y_{34}=B^2I$ as an acceptable monomial of type (c). Given $\mu=B^2I$, $f_{\mu}=I(B^2-2AE)\in S^s_1(Q')\subset (Q')^+_3, \text{ where } AEI \in A_{>\mu}.$

(d) There are 3 repeated indices, Without loss of generality we can assume the indices are 1,2,3, each of them repeated. In order to have a non-initial example of this kind, it is enough to have 1 reversal pair. For example in $(1,2,3|3,2,1)$, in the second column of the tableau $3\ge 1$. So we can form a $3\times 3$ symmetric matrix with these rows and columns,
$$
X=\left(%
\begin{array}{ccc}
   a & b & c  \\
  b & d & e  \\
  c & e & f \\
    \end{array}%
\right),$$

$$
Y=\left(%
\begin{array}{ccc}
 
  A & B & C  \\
  B & D & E  \\
  C & E & F \\

   \end{array}%
\right).$$

The monomial $\mu=y_{13}y_{22}y_{31}=C^2D$ is an acceptable monomial of type (c). Given $\mu=C^2D$, $f_{\mu}=-C(BE-CD)\in S^s_1(Q')\subset (Q')^+_3, \text{ where } BEC \in A_{>\mu}.$

So the claim of the Proposition is true for $k=3$.

Now assume $k>3$ and that the Proposition \ref{prop:main-triangularity-perm-symm} is true for all integers less than $k$. We want to show that the Proposition \ref{prop:main-triangularity-perm-symm} is also true for $k$. Let $\mu=\mu_1\mu_2 \cdots \mu_k$ be a degree $k$ acceptable non-initial monomial, we can write $\mu$ such that $\mu_2 \cdots\mu_k$ is a degree $k-1$ acceptable non-initial monomial (it is enough that the monomial $\mu_2 \cdots \mu_k$ includes one reversal). Then by the induction assumption  $\mu_2 \cdots \mu_k$ is the initial monomial (in rev. lex.) of an element of $(Q')^+_{k-1}$. So we have $\mu=\mu_1(\mu_2 \cdots \mu_k)$ is the initial monomial (in rev. lex.) of an element of $(Q')^+_k$. This completes the proof.
\end{proof}
\vspace{.2in}

\begin{cor} \label{cor:main-k-perm-symm}
For $2 \leq k \leq n$ we have
\begin{equation*}
(Q')^+_k=\mathrm{Ann}(\mathrm{perm}(X))\cap S^{s}_{k}
\end{equation*}

\end{cor}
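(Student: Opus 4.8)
The plan is to run the same argument that established Corollary 3.10 in the determinant case, with the dictionary obtained by replacing $\det(X)$ by $\mathrm{perm}(X)$, the space $V$ by $W^{+}$, the minor spaces $M_k(X)$ by the permanent spaces $P_k(X)$, and the Conca initial monomials $C_k$ (resp.\ their complement $C'_k$ in $A_k$) by the set $E_k$ of initial monomials of $k\times k$ permanents (resp.\ its complement $E'_k$ in $A_k$). As there, I fix $k$ with $2\le k\le n$ and reduce everything to a dimension count inside $S^s_k$.

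First the easy inclusion. By Proposition 3.13 the degree two generators of $W$ annihilate $\mathrm{perm}(X)$, and by Lemma 3.18 the five degree three forms $F_1,\dots,F_5$ attached to each symmetric $6\times 6$ submatrix annihilate $\mathrm{perm}(X)$; hence $W^{+}=W+H\subset\mathrm{Ann}(\mathrm{perm}(X))$, so $(W^{+})_k\subset\mathrm{Ann}(\mathrm{perm}(X))\cap S^s_k$ for all $k$. It therefore suffices to prove the bound $\dim(W^{+})_k\ge\dim\bigl(\mathrm{Ann}(\mathrm{perm}(X))\cap S^s_k\bigr)$.

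Next I would pin down the target dimension. By Lemma 3.15, $S^s_{n-k}\circ\mathrm{perm}(X)=P_k(X)$, so Remark 1.2 gives $(\mathrm{Ann}(\mathrm{perm}(X)))_k=(\mathrm{Ann}(P_k(X)))_k$ and hence
\[
\dim\bigl(\mathrm{Ann}(\mathrm{perm}(X))\cap S^s_k\bigr)=\dim S^s_k-\dim P_k(X)
\]
(for $k=2$ this is exactly Lemma 3.17; in general $\dim P_k(X)=\frac{1}{2}\binom{n}{k}\bigl(\binom{n}{k}+1\bigr)$, computed as in Lemma 3.16). On the other hand, exactly as in the proof of Corollary 3.10, the monomial basis of $S^s_k$ is the disjoint union $A_k\sqcup U_k$ with $A_k=E_k\sqcup E'_k$, the $k\times k$ permanents reduce via their leading terms to a basis of $P_k(X)$ indexed by $E_k$ so that $|E_k|=\dim P_k(X)$, and the monomial sets $U_k$ and $E'_k$ span linearly independent subspaces with
\[
|U_k|+|E'_k|=\dim S^s_k-|E_k|=\dim S^s_k-\dim P_k(X)=\dim\bigl(\mathrm{Ann}(\mathrm{perm}(X))\cap S^s_k\bigr).
\]

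Finally I would supply the matching lower bound on $\dim(W^{+})_k$ by a triangularity argument, which is the one substantive step. By Lemma 3.19 each unacceptable monomial of degree $k$ is an explicit element of $S^s_{k-2}W\subset(W^{+})_k$, hence equals its own leading term in the reverse lexicographic order; by Proposition 3.21 each monomial of $E'_k$ is the reverse-lexicographic leading monomial of a suitable element of $(W^{+})_k$. Since the monomials of $U_k$ and of $E'_k$ are pairwise distinct, these $|U_k|+|E'_k|$ elements of $(W^{+})_k$ have pairwise distinct leading monomials, so they are linearly independent and $\dim(W^{+})_k\ge|U_k|+|E'_k|$. Chaining this with the previous display and the easy inclusion gives
\[
\dim\bigl(\mathrm{Ann}(\mathrm{perm}(X))\cap S^s_k\bigr)\ \ge\ \dim(W^{+})_k\ \ge\ |U_k|+|E'_k|\ =\ \dim\bigl(\mathrm{Ann}(\mathrm{perm}(X))\cap S^s_k\bigr),
\]
so equality holds throughout and $(W^{+})_k=\mathrm{Ann}(\mathrm{perm}(X))\cap S^s_k$. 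The main obstacle is the triangularity bookkeeping: one must be certain that Lemma 3.19 and Proposition 3.21 between them account for \emph{every} monomial of $U_k\sqcup E'_k$, with no overlap, so that the leading-term count is exactly $|U_k|+|E'_k|$. The degree three generators in $H$ are indispensable here, since, in contrast to the determinant case, the degree two part $W$ alone does not realize all of $E'_k$ once $n\ge 6$ --- which is precisely why $\mathrm{Ann}(\mathrm{perm}(X))$ needs generators in degrees two \emph{and} three.
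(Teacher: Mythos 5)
Your proposal is correct and follows essentially the same route as the paper's own proof: establish the inclusion $(W^{+})_k\subset\mathrm{Ann}(\mathrm{perm}(X))\cap S^s_k$, identify the target dimension as $\dim S^s_k-\dim P_k(X)$ via Lemma 3.15 and Remark 1.2, and obtain the matching lower bound $\dim(W^{+})_k\ge |U_k|+|E'_k|$ by the triangularity results (the unacceptable-monomial lemma and Proposition 3.21), exactly as the paper does. Your only deviation is cosmetic: you get the easy inclusion directly from the fact that the generators of $W^{+}$ annihilate $\mathrm{perm}(X)$, whereas the paper routes it through $\mathrm{Ann}(P_k(X))$; the substance of the argument is identical.
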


\begin{proof}
We have

(A) $(Q')^{+} \circ \mathrm{perm} (X)=0  \Longleftrightarrow (Q')^+ \circ S^s_{n-2}\circ (\mathrm{perm}(X))=0 \Longleftrightarrow  (Q')^+ \circ P_2(X)=0$.

(B) \text{ By Lemma \ref{lem:perm-symm-W-degree2} we have} ($\mathrm {Ann}(\mathrm{perm}(X))) \cap S^s_2= (Q')^+_2\Rightarrow S^s_{k-2} ((Q')^+)\circ (S^s_{n-k} \circ \mathrm{perm} (X))=0$.
\newline$ \Rightarrow S^s_{k-2}((Q')^+) \circ P_k(X)=0$.
\newline$ \Rightarrow ((Q')^+)_k \circ P_k(X)=0$. (By Remark \ref{remark:introIK})

Therefore

\begin{equation*}
(Q')^+_k \subset \mathrm {Ann}(P_k(X))  \cap S^s_k.
 \end{equation*}

By Remark \ref{remark:introIK} and Lemma \ref{lem:Scircleperm-symm} we have
\begin{equation*}
(\mathrm{Ann}(\mathrm{perm}(X)))_k=(\mathrm{Ann}(S^s_{n-k}\circ(\mathrm{perm}(X)))_k=(\mathrm{Ann}(P_{k}(X)))_k
\end{equation*}

So we have

\begin{equation*}
\dim (Q')^+_k \leq \dim(\mathrm{Ann}(\mathrm{perm}(X))\cap S^s_k)=\dim S^s_k-\dim P_k(X).
\end{equation*}

On the other hand by the Definition \ref{def:perm-symmW+} the sets $U_k$ and $E'_k$ are linearly independent and form a basis for the corresponding subspaces. So by Lemma \ref{lem:perm-sym-unacceptable-trangularity} and Proposition \ref{prop:main-triangularity-perm-symm}, we have

$$
\dim \langle Q'\rangle ^+_k\ge\dim S^s_k-\dim P_k(X)=\dim <E'_k>+\dim <U_k> .
$$

So we have
$$
\dim \langle Q' \rangle ^+_k=\dim S^s_k-\dim P_k(X)=\dim( \mathrm{Ann}(\mathrm{perm}(X))\cap S^s_k).
$$

\end{proof}

\vspace{.2in}

\begin{thm}\label{thm:perm-sym-main-theorem}
Let $X$ be a generic symmetric $n\times n$ matrix. Then the apolar ideal $\mathrm{Ann}(\mathrm{perm}(X))$ is the ideal $(Q')^+$ of Definition \ref{def:perm-symmW+}, generated in degrees two and three.
\end{thm}

\begin{proof}
This follows directly from Proposition \ref{prop:main-triangularity-perm-symm} and Corollary \ref{cor:main-k-perm-symm}.
\end{proof}
\vspace{.2in}


\section{Application to the ranks of the determinant and permanent of the generic symmetric matrix}
\label{section:ranks-symmetric}

In this section we apply our results from sections \ref{section: Dimension} and \ref{Section:symmetric-Generators of the apolar ideal } to find some lower bounds for the the cactus rank and the rank of the determinant and permanent of the generic symmetric matrix. 

\begin{Notation}
Let $F\in R^s=\sf k$$[x_{ij}]$ be a homogeneous form of degree $d$. A presentation 
\begin{equation}\label{eq:Waring decomposition}
F=l_1^d+\cdots +l_s^d \text{ with }l_i\in R^s_1.
\end{equation}

is called a \emph{Waring decomposition} of length $s$ of the polynomial $F$. The minimal number $s$ that satisfies the Equation \ref{eq:Waring decomposition} is called the \emph{rank} of $F$.

Let $F\in R^s=\sf k$$[x_{ij}]$ be a homogeneous form. The apolarity action of $S^s=\sf k$$[y_{ij}]$ on $R^s$, defines $S^s$ as a natural coordinate ring on the projective space $\textbf{P}(R^s_1)$ of 1-dimensional subspaces of $R^s_1$. A finite subscheme $\Gamma\subset \textbf{P}(R^s_1)$ is apolar to $F$ if the homogeneous ideal $I_\Gamma \subset S^s$ is contained in $\mathrm {Ann}(F)$ (\cite{IK},\cite{RS}).
\vspace{0.2in}

\end{Notation}
\begin{remark}\label{rmrk:rank-definition-ranestad}((\cite{IK} Def. 5.66,\cite{RS})) Let $\Gamma=\{[l_1],\ldots,[l_s]\}$ be a collection of $s$ distinct points in $\textbf{P}(R_1)$. Then 
$$
F=c_1l_1^d+\cdots+c_sl_s^d \text{ with }c_i\in\sf{k}.
$$
if and only if 
$$I_\Gamma \subset \mathrm {Ann}(F)\subset S$$

\end{remark}

\vspace{0.2in}

\begin{defi}\label{def:ranks} 

We have the following ranks (\cite{IK} Def. 5.66 , \cite{BR} and  \cite{RS}). Here $\Gamma$ is a punctual scheme (possibly not smooth), and the degree of $\Gamma$ is the number of points (counting multiplicities) in $\Gamma$.

a. the rank $r(F)$:
\begin{equation*}
r(F)=\min\{\deg \Gamma| \Gamma\subset \textbf{P}(R^s_1) \text{ smooth}, \dim \Gamma=0,I_\Gamma \subset \mathrm {Ann}(F)\}.
\end{equation*}

Note that when $\Gamma$ is smooth, it is the set of points in the Remark \ref{rmrk:rank-definition-ranestad} (\cite{IK}, page 135).

b. the smoothable rank  $sr(F)$:
\begin{equation*}
sr(F)=\min\{\deg \Gamma| \Gamma\subset \textbf{P}(R^s_1) \text{ smoothable}, \dim \Gamma=0,I_\Gamma \subset \mathrm {Ann}(F)\}.
\end{equation*}

Note that for the smoothable rank one considers the smoothable schemes, that are the schemes which are the limits of smooth schemes of $s$ simple points (\cite{IK}, Definition 5.66). 

c. the cactus rank (scheme length in \cite{IK}, Definition 5.1 page 135) $cr(F)$:
\begin{equation*}
cr(F)=\min\{\deg \Gamma| \Gamma\subset \textbf{P}(R^s_1), \dim \Gamma=0,I_\Gamma \subset \mathrm {Ann}(F)\}.
\end{equation*}

d. the differential rank (Sylvester's catalecticant or apolarity bound) is the maximal dimension of a homogeneous component of $S^s/\mathrm {Ann}(F)$:
\begin{equation*}
l_{diff}(F)= \max_{i\in \mathbb{N}_0}  \{ (H(S^s/\mathrm {Ann}(F)))_i\}.
\end{equation*}
\end{defi}

Below we give a lower bound for the cactus rank of the determinant and permanent of the generic symmetric matrix. We do not have information on  the smoothable rank of the generic symmetric determinant or permanent. It is still open to find good bounds for the smoothable rank. The work of A. Bernardi and K. Ranestad \cite{BR} in the case of generic forms of a given degree and number of variables show that the cactus rank and smoothable rank can be very different.

\vspace{0.2in}
\begin{prop}\label{prop:ranks-inequality}(\cite{IK}, Proposition 6.7C)
The above ranks satisfy
\begin{equation*}
l_{diff}(F) \leq cr(F) \leq sr(F) \leq r(F).
\end{equation*}
\end{prop}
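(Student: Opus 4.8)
The plan is to treat the three rightmost inequalities as a formality about nested families of apolar schemes, and to put the real work into $l_{diff}(F)\le cr(F)$.

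For $cr(F)\le sr(F)\le r(F)$: each of the three quantities is the minimum of $\deg\Gamma$ over finite subschemes $\Gamma\subset\textbf{P}(R^s_1)$ subject to the common constraint $I_\Gamma\subseteq\mathrm{Ann}(F)$, the competitions differing only in that $sr(F)$ ranges over those $\Gamma$ that are \emph{smoothable} and $r(F)$ over those that are \emph{smooth}. A smooth finite scheme is smoothable, and a smoothable finite scheme is in particular a finite scheme, so the admissible sets shrink as one passes from $cr$ to $sr$ to $r$; hence the minima can only increase. (Since $\deg F\ge 1$ here, all three families are nonempty and the minima finite — a generic apolar set of reduced points already furnishes a smooth $\Gamma$.)

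For $l_{diff}(F)\le cr(F)$ I would proceed as follows. Choose a finite subscheme $\Gamma\subset\textbf{P}(R^s_1)$ with $I_\Gamma\subseteq\mathrm{Ann}(F)$ and $\deg\Gamma=cr(F)$; note $\Gamma$ is nonempty, because $\mathrm{Ann}(F)$ does not contain the irrelevant maximal ideal of $S^s$ when $\deg F\ge 1$. The inclusion $I_\Gamma\subseteq\mathrm{Ann}(F)$ gives a graded surjection $S^s/I_\Gamma\to S^s/\mathrm{Ann}(F)$, whence
\[
H(S^s/\mathrm{Ann}(F))_i\ \le\ H(S^s/I_\Gamma)_i\qquad\text{for every }i\ge 0,
\]
where $H(S^s/\mathrm{Ann}(F))_i=\dim_{\sf k}\bigl(S^s_i\circ F\bigr)$ is identified via the differentiation map $\phi\mapsto\phi\circ F$ on $S^s_i$, whose kernel is $(\mathrm{Ann}(F))_i$. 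It then remains to show $H(S^s/I_\Gamma)_i\le\deg\Gamma$ for all $i$. This is the standard fact that the Hilbert function of a $0$-dimensional subscheme never exceeds its degree: since $I_\Gamma$ is the \emph{saturated} homogeneous ideal of $\Gamma$, the sheaf sequence $0\to\mathcal I_\Gamma(i)\to\mathcal O(i)\to\mathcal O_\Gamma(i)\to 0$ exhibits $(S^s/I_\Gamma)_i$ as a subspace of $H^0(\Gamma,\mathcal O_\Gamma(i))$, which has ${\sf k}$-dimension $\deg\Gamma$ (every invertible sheaf on the finite scheme $\Gamma$ has $h^0$ equal to the length of $\Gamma$); equivalently, $S^s/I_\Gamma$ has Krull dimension one and positive depth, so a general linear form is a nonzerodivisor on it, its Hilbert function is non-decreasing, and it stabilises at $\deg\Gamma$. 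Combining this bound with the displayed inequality gives $H(S^s/\mathrm{Ann}(F))_i\le\deg\Gamma=cr(F)$ for all $i$, and taking the maximum over $i$ yields $l_{diff}(F)=\max_i H(S^s/\mathrm{Ann}(F))_i\le cr(F)$.

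The one step with genuine content is the bound $H(S^s/I_\Gamma)_i\le\deg\Gamma$, and its crux is the saturatedness of $I_\Gamma$: that is precisely what forces $(S^s/I_\Gamma)_i$ to embed into the $\deg\Gamma$-dimensional space of sections of a line bundle on $\Gamma$ (equivalently, that $S^s/I_\Gamma$ has depth $\ge 1$, hence a linear nonzerodivisor). All the rest is bookkeeping — the comparison of Hilbert functions under a graded surjection, and the elementary inclusions among the families of schemes defining $cr$, $sr$, and $r$.
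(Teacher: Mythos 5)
Your argument is correct. Note, though, that the paper itself offers no proof of this proposition: it is quoted from Iarrobino--Kanev (Proposition 6.7C of \cite{IK}) and used as a black box to feed the Ranestad--Schreyer bound in Theorems 4.4 and 4.5. What you have written is the standard proof that the cited reference supplies, so the comparison is really between your self-contained argument and the literature rather than with anything in this paper. Your two halves are both sound: the chain $cr(F)\leq sr(F)\leq r(F)$ is indeed pure bookkeeping, since smooth implies smoothable implies finite, so the minimum is taken over successively smaller families of apolar subschemes; and the inequality $l_{diff}(F)\leq cr(F)$ follows, exactly as you say, from the graded surjection $S^s/I_\Gamma\to S^s/\mathrm{Ann}(F)$ induced by $I_\Gamma\subseteq\mathrm{Ann}(F)$ together with the bound $H(S^s/I_\Gamma)_i\leq \deg\Gamma$, which holds because $I_\Gamma$ is the saturated ideal of a zero-dimensional scheme (your emphasis on saturatedness is exactly right: for a non-saturated ideal with the same saturation the Hilbert function can exceed $\deg\Gamma$ in low degrees, so the convention that an apolar scheme is defined by its saturated homogeneous ideal is what makes the catalecticant bound work). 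The only point worth flagging is the parenthetical existence claim: that the three families are nonempty rests on the classical fact that $F$ admits a finite smooth apolar scheme, i.e.\ a power-sum decomposition, which is where the running hypotheses on $\sf k$ (infinite, characteristic zero or $p>2$, and in general $p$ large relative to $\deg F$ or a divided-power formulation) enter; since the paper assumes these and the cited reference handles this, your sketch is adequate, but in a fully rigorous write-up you would want to cite or prove that existence statement rather than wave at a ``generic apolar set of reduced points.''
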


\begin{prop}\label{prop-mainRS} \textbf{(Ranestad-Schreyer)} (\cite{RS}, Proposition 1) If the ideal of $\mathrm {Ann}(F)$ is generated in degree d and $\Gamma\subset \textbf{P}(R^s_1)$ is a finite (punctual) apolar subscheme to $F$, then
\begin{equation*}
\deg \Gamma \geq \frac{1}{d} \deg (\mathrm {Ann}(F)),
\end{equation*}
where $\deg (\mathrm {Ann}(F)) =\dim (S^s/\mathrm {Ann} (F))$ is the length of the 0-dimensional scheme defined by $\mathrm {Ann}(F)$. 
\end{prop}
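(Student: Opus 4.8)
The plan is to turn the apolarity containment $I_\Gamma\subseteq\mathrm{Ann}(F)$ into the existence of a nonzerodivisor of small degree in the homogeneous coordinate ring of $\Gamma$. Write $I=\mathrm{Ann}(F)\subseteq S^s$ and $A=S^s/I$, so that $A$ is Artinian (Gorenstein) and $\deg(\mathrm{Ann}(F))=\dim_{\sf k}A$. Let $I_\Gamma\subseteq S^s$ be the saturated homogeneous ideal of the finite scheme $\Gamma$ and $B=S^s/I_\Gamma$ its coordinate ring; since $\Gamma$ is finite and $I_\Gamma$ is saturated, $B$ is a standard graded Cohen--Macaulay ring of Krull dimension $1$ whose Hilbert function stabilizes at $\delta:=\deg\Gamma$, and all of its associated primes are strictly contained in the irrelevant ideal $\mathfrak m$. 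Apolarity gives a surjection $B\twoheadrightarrow A$ of graded rings with kernel the ideal $K:=I/I_\Gamma\subseteq B$; here $K\neq 0$ (otherwise $A\cong B$ would be $1$-dimensional), and $A=B/K$ is Artinian, so $K$ is $\mathfrak m$-primary.

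The heart of the argument is that $K$ is generated in degrees $\le d$ as an ideal of $B$, since $\mathrm{Ann}(F)$ is generated in degrees $\le d$ in $S^s$ and $K$ is its image. I would use this to produce a homogeneous nonzerodivisor of $B$ of degree $\le d$ lying in $K$. For each associated prime $\mathfrak p$ of $B$ one has $K\not\subseteq\mathfrak p$ (because $\sqrt K=\mathfrak m\not\subseteq\mathfrak p$); combining this with the facts that $B$ is standard graded with $B_\ell\not\subseteq\mathfrak p$ for every $\ell$, and that $K$ is generated in degrees $\le d$, one can arrange a single degree $d^\ast\le d$ for which $K_{d^\ast}\not\subseteq\mathfrak p$ holds simultaneously for all the (finitely many) associated primes $\mathfrak p$. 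Since ${\sf k}$ is infinite, the finite-dimensional space $K_{d^\ast}$ is not the union of its proper subspaces $K_{d^\ast}\cap\mathfrak p$, so I may pick $z\in K_{d^\ast}$ lying in no associated prime of $B$, i.e.\ a homogeneous nonzerodivisor of $B$ of degree $d^\ast\le d$.

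Finally, $zB\subseteq K$ because $z\in K$ and $K$ is an ideal, so $A=B/K$ is a quotient of $B/zB$; and since $z$ is a homogeneous nonzerodivisor of degree $d^\ast$ on the $1$-dimensional Cohen--Macaulay ring $B$, the exact sequence $0\to B(-d^\ast)\xrightarrow{\,z\,}B\to B/zB\to 0$ yields $\dim_{\sf k}(B/zB)=\sum_k\bigl(H_B(k)-H_B(k-d^\ast)\bigr)=d^\ast\cdot\delta$, the last equality because $H_B$ is eventually the constant $\delta$. Hence $\deg(\mathrm{Ann}(F))=\dim_{\sf k}A\le\dim_{\sf k}(B/zB)=d^\ast\delta\le d\,\delta=d\cdot\deg\Gamma$, which is the desired inequality.

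I expect the main obstacle to be the middle step: extracting a \emph{single} bounded degree $d^\ast\le d$ that avoids all associated primes of $B$ at once, and checking that the resulting $z$ is genuinely a nonzerodivisor. This is exactly where the three standing hypotheses enter --- $I_\Gamma$ saturated (so $B$ is Cohen--Macaulay of positive depth, hence has nonzerodivisors), ${\sf k}$ infinite (so the vector-space version of prime avoidance applies), and $\mathrm{Ann}(F)$ generated in degree $\le d$ (so $K$ is generated in degrees $\le d$, which caps $d^\ast$). The length computation $\dim_{\sf k}(B/zB)=d^\ast\delta$ and the passage to the quotient $A$ are then routine.
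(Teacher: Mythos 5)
Your proof is correct. The paper itself gives no proof of this proposition --- it is quoted from Ranestad--Schreyer \cite{RS} --- and your argument is essentially theirs: a general element $z$ of $\mathrm{Ann}(F)$ of degree at most $d$ is, by prime avoidance over the infinite field ${\sf k}$, a nonzerodivisor on $B=S^s/I_\Gamma$, and the exact sequence $0\to B(-d^\ast)\to B\to B/zB\to 0$ gives $\dim_{\sf k}(B/zB)=d^\ast\deg\Gamma\ge\dim_{\sf k}(S^s/\mathrm{Ann}(F))$ because $I_\Gamma+(z)\subseteq\mathrm{Ann}(F)$. The only point worth making explicit in your middle step is that you may simply take $d^\ast=d$: if $g\in K$ is a generator of degree $e\le d$ with $g\notin\mathfrak p$, then $B_{d-e}\,g\subseteq K_d$ cannot lie in $\mathfrak p$, since $\mathfrak p$ is prime and contains no $B_\ell$ with $\ell\ge 1$ (as $\mathfrak p\not\supseteq B_1$), so $K_d\not\subseteq\mathfrak p$ for every associated prime simultaneously.
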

\vspace{.2in}

\begin{remark}
The Ranestad-Schreyer Proposition is true for arbitrary characteristic: the argument depends on B\'{e}zout's theorem, which is true for $\sf k$ algebraically closed (see \cite{Go}, page 113); and none of the degrees involved in the proof changes as one extends from an infinite field $\sf k$ to its algebraic closure.
\end{remark}

Using the Ranestad-Schreyer Proposition \ref{prop-mainRS} and our results in sections \ref{section: Dimension} and \ref{Section:symmetric-Generators of the apolar ideal }, we have
\vspace{0.2in}

\begin{thm}\label{thm:RS-rank-symm-det}
For the determinant  of a generic symmetric $n\times n$ matrix $X$, we have
\begin{equation*}
{1\over{2(n+2)}} {2n+2\choose n+1} \leq cr(\det(X)) \leq sr(\det(X)) \leq r(\det(X)).
\end{equation*}
\end{thm}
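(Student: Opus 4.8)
The plan is to deduce the bound directly from the Ranestad--Schreyer estimate (Proposition 4.3), feeding in the two facts about $F=\det(X)$ already established: that $\mathrm{Ann}(\det(X))$ is generated in degree $d=2$ (Theorem 3.11), and that its colength $\deg(\mathrm{Ann}(\det(X)))=\dim_{\sf k}(S^s/\mathrm{Ann}(\det(X)))$ equals the Catalan number $C_{n+1}=\frac{1}{n+2}\binom{2n+2}{n+1}$ (Corollary 2.6). I would first note that $\det(X)\in R^s_n$ is a nonzero form, so its apolar algebra is Artinian; hence $\deg(\mathrm{Ann}(\det(X)))$ is finite and every apolar subscheme $\Gamma$ appearing in Definition 4.1 is zero-dimensional of finite degree, which is exactly what makes the cactus, smoothable, and honest ranks well-defined.

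Next I would carry out the one-line computation. Let $\Gamma\subset\textbf{P}(R^s_1)$ be an arbitrary finite apolar subscheme to $\det(X)$, so $\dim\Gamma=0$ and $I_\Gamma\subset\mathrm{Ann}(\det(X))$. Since $\mathrm{Ann}(\det(X))$ is generated in degree $2$, Proposition 4.3 applies with $d=2$ and yields
\[\deg\Gamma\ \geq\ \frac{1}{2}\,\deg(\mathrm{Ann}(\det(X)))\ =\ \frac{1}{2}\cdot\frac{1}{n+2}\binom{2n+2}{n+1}\ =\ \frac{1}{2(n+2)}\binom{2n+2}{n+1}.\]
Taking the infimum over all such $\Gamma$ gives $cr(\det(X))\geq \frac{1}{2(n+2)}\binom{2n+2}{n+1}$, since the cactus rank is by definition this infimum. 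The remaining inequalities $cr(\det(X))\leq sr(\det(X))\leq r(\det(X))$ are exactly Proposition 4.2, and hold because a smooth zero-dimensional apolar scheme is in particular smoothable, and a smoothable one is in particular a finite apolar scheme, so each minimum is taken over a successively smaller class of schemes.

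I do not expect a genuine obstacle here: the theorem is a formal corollary of material proved earlier, and the argument is essentially a substitution into Proposition 4.3. The only point deserving a remark is that Proposition 4.3 is invoked with $d=2$, the smallest admissible value, which is precisely the strength of Theorem 3.11; any weakening of the generation statement — as happens in the permanent case, where degree-three generators are needed, and which is handled separately in Theorem 4.5 — would force a larger $d$ and hence a correspondingly weaker bound.
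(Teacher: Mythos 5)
Your proposal is correct and is exactly the argument the paper intends: apply the Ranestad--Schreyer bound (Proposition 4.3) with $d=2$ (justified by Theorem 3.11) and $\deg(\mathrm{Ann}(\det(X)))=C_{n+1}$ (Corollary 2.6), then chain with Proposition 4.2 for $cr\leq sr\leq r$.
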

\begin{proof}
This follows directly from Propositions  \ref {prop:ranks-inequality} and \ref{prop-mainRS}, Theorem \ref{main-theorem-det-symmetric} and Corollary \ref{cor:degree-catalan}.

\end{proof}

\begin{thm}\label{thm:RS-rank-symm-perm}
For the permanent  of a generic symmetric $n\times n$ matrix $X$, we have
\begin{equation*}
\frac{{2n\choose n}+2^n}{6} \leq cr(\mathrm{perm}(X)) \leq sr(\mathrm{perm}(X)) \leq r(\mathrm{Perm}(X)).
\end{equation*}
\end{thm}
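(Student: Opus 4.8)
The plan is to derive Theorem 4.5 as a direct application of the structural results of Section~3 together with the Ranestad--Schreyer bound (Proposition 4.3) and the chain of inequalities in Proposition 4.2. First I would record the two inputs that these propositions need. By Theorem 3.23 the apolar ideal $\mathrm{Ann}(\mathrm{perm}(X))$ is generated in degrees two and three, so the relevant generation degree in Proposition 4.3 is $d=3$. Next I would compute the length $\deg(\mathrm{Ann}(\mathrm{perm}(X)))=\dim(S^s/\mathrm{Ann}(\mathrm{perm}(X)))$ as the sum of the Hilbert function of the apolar algebra.

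For that computation, note that $(\mathrm{Ann}(\mathrm{perm}(X)))_k$ is the kernel of the contraction map $S^s_k\to R^s_{n-k}$, $h\mapsto h\circ\mathrm{perm}(X)$, so its image $S^s_k\circ\mathrm{perm}(X)$ has dimension $\dim S^s_k-\dim(\mathrm{Ann}(\mathrm{perm}(X)))_k=H(S^s/\mathrm{Ann}(\mathrm{perm}(X)))_k$. By Lemma 3.15 this image is $P_{n-k}(X)$, and by the dimension count underlying Lemma 3.16 one has $\dim P_{j}(X)=\tfrac12{n\choose j}\bigl({n\choose j}+1\bigr)$; this is exactly the quantity $T(n,k)$ recorded for Table~2. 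Hence
$$\deg(\mathrm{Ann}(\mathrm{perm}(X)))=\sum_{k=0}^{n}\dim P_{n-k}(X)=\sum_{k=0}^{n}\frac{{n\choose k}\bigl({n\choose k}+1\bigr)}{2}=\frac12\left(\sum_{k=0}^{n}{n\choose k}^2+\sum_{k=0}^{n}{n\choose k}\right)=\frac{{2n\choose n}+2^n}{2},$$
where the last step uses the Vandermonde identity $\sum_k{n\choose k}^2={2n\choose n}$ and $\sum_k{n\choose k}=2^n$.

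Finally I would apply Proposition 4.3 with $F=\mathrm{perm}(X)$ and $d=3$: every finite punctual apolar subscheme $\Gamma\subset\mathbf P(R^s_1)$ satisfies $\deg\Gamma\ge\tfrac1d\deg(\mathrm{Ann}(F))=\tfrac13\cdot\tfrac{{2n\choose n}+2^n}{2}=\tfrac{{2n\choose n}+2^n}{6}$. Taking the minimum over all such $\Gamma$ (respectively over the punctual, smoothable, and smooth ones) yields $\tfrac{{2n\choose n}+2^n}{6}\le cr(\mathrm{perm}(X))$, and the remaining inequalities $cr(\mathrm{perm}(X))\le sr(\mathrm{perm}(X))\le r(\mathrm{perm}(X))$ are exactly Proposition 4.2. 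I do not expect a genuine obstacle here: the substantive work — that $\mathrm{Ann}(\mathrm{perm}(X))$ is generated in degrees two and three, and the identification of its graded pieces with the permanental spaces $P_k(X)$ — is already done in Section~3. The only point requiring mild care is the binomial-sum evaluation above, which can be sanity-checked against the row sums of Table~2 (for instance $1+10+21+10+1=43=({8\choose4}+2^4)/2$).
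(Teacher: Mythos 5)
Your proposal is correct and follows essentially the same route the paper intends: Theorem 3.23 gives generation in degrees two and three (so $d=3$ in Proposition 4.3), the length $\deg(\mathrm{Ann}(\mathrm{perm}(X)))=\sum_k\dim P_k(X)=\frac{{2n\choose n}+2^n}{2}$ is exactly the row sum the paper records for Table 2 via Lemma 3.15 and the permanent count, and the chain of inequalities is Proposition 4.2. Your explicit Vandermonde evaluation simply supplies the arithmetic the paper states without proof, so there is nothing to correct.
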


\begin{proof}
This follows directly from Proposition \ref{prop:ranks-inequality} and \ref{prop-mainRS}, Theorem \ref{thm:perm-sym-main-theorem} and Table \ref{table-symm-hilb-perm}.

\end{proof}

\begin{Notation} \cite{LT}
Let $\Phi \in S^d\mathbb C^n$ be a degree $d$ polynomial with coefficients in the complexes $\mathbb C$, in variables $(x_1,\ldots ,x_n)$, we can polarize $\Phi$ and consider it as a multilinear form $\tilde{\Phi}$ where $\Phi(x)=\tilde{\Phi}(x,\ldots,x)$ and consider the linear map $\Phi_{s,d-s}:S^s\mathbb C^{n*}\rightarrow S^{d-s}\mathbb C^{n}$, where $\Phi_{s,d-s}(x_1,\ldots,x_s)(y_1,\ldots,y_{d-s})=\tilde{\Phi}(x_1,\ldots ,x_s,y_1,\ldots ,y_{d-s})$. Define
\begin{equation*}
Zeros(\Phi)=\{[x]\in \mathbb P \mathbb C^{n*}| \Phi(x)=0\} \subset  \mathbb P \mathbb C^{n*}.
\end{equation*}
Let $x_1,\ldots,x_n$ be linear coordinates on  $\mathbb C^{n*}$ and define

$$
\Sigma_s(\Phi):=\{[x] \in Zeros(\Phi)| \frac{\partial^I \Phi}{\partial x^I}(x)=0,\forall  I,\text{ such that } |I|\leq s\}.
$$

\end{Notation}
\vspace{.2in}

In this notation $\Phi_{s,d-s}$ is the map from $S_s\to R_{n-s}$ taking  $h$ to $h\circ \Phi$, hence its rank is  $H(\mathfrak A_A)_s$.

In the following theorem we use the convention that $\dim \emptyset=-1$.
\vspace{.2in}

\begin{thm} \label{thm:LT-rank}\textbf{(Landsberg-Teitler)}(\cite{LT}, Theorem 1.3) 
Let $\Phi \in S^d\mathbb C^{n}$, Let $1\leq s \leq d$. Then 

\begin{equation*}
rank(\Phi)\geq rank \Phi_{s,d-s}+ \dim \Sigma_s(\Phi)+1.
\end{equation*}
\end{thm}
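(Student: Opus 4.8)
The plan is to feed an optimal Waring decomposition of $\Phi$ into the Apolarity Lemma and then recognize $\Sigma_s(\Phi)$ as the preimage, under a \emph{finite} map, of a linear subspace of $\mathbb P^{r-1}$ whose codimension is exactly the Hilbert function of the decomposition. Fix $\Phi=\sum_{i=1}^{r}\ell_i^{d}$ with $r=\operatorname{rank}(\Phi)$; in an optimal decomposition no two $\ell_i$ are proportional, so $\Gamma=\{[\ell_1],\dots,[\ell_r]\}$ is a reduced set of $r$ points of $\mathbb P\mathbb C^{n}$, apolar to $\Phi$, meaning $I_\Gamma\subseteq\operatorname{Ann}(\Phi)$ (Apolarity Lemma, cf.\ \cite{IK}). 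Then the apolar algebra $\mathfrak A_\Phi$ is a quotient of the homogeneous coordinate ring of $\Gamma$, so, since $\operatorname{rank}\Phi_{s,d-s}=H(\mathfrak A_\Phi)_s$ as recalled just before the theorem, writing $h_\Gamma$ for the Hilbert function of $\Gamma$ we obtain $\operatorname{rank}\Phi_{s,d-s}\le h_\Gamma(s)$. It therefore suffices to prove the sharper bound $r\ge h_\Gamma(s)+\dim\Sigma_s(\Phi)+1$.

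I would argue in four steps. (1) By the Euler relations $(d-|I'|)\,\partial^{I'}\Phi=\sum_k x_k\,\partial_k\partial^{I'}\Phi$, a point of $\mathbb P\mathbb C^{n*}$ at which all order-$s$ partials of $\Phi$ vanish kills every partial of order $\le s$; hence $\Sigma_s(\Phi)$ is the common zero locus in $\mathbb P\mathbb C^{n*}$ of $\operatorname{Im}\Phi_{s,d-s}=\langle D\circ\Phi : D\in S^s\mathbb C^{n*}\rangle$. (2) Through the decomposition, $D\circ\Phi=c_{d,s}\sum_i D(\ell_i)\,\ell_i^{d-s}$ for a nonzero constant $c_{d,s}$, where $D(\ell_i)$ denotes the form $D$ evaluated at $\ell_i$; hence for $x\in\mathbb C^{n*}$,
\[
(D\circ\Phi)(x)=c_{d,s}\sum_i D(\ell_i)\,\langle\ell_i,x\rangle^{d-s}=c_{d,s}\,\langle\,\beta_s(D)\,,\,v(x)\,\rangle ,
\]
where $\beta_s\colon S^s\mathbb C^{n*}\to\mathbb C^{r}$ is $\beta_s(D)=(D(\ell_i))_{i}$ and $v(x)=(\langle\ell_1,x\rangle^{d-s},\dots,\langle\ell_r,x\rangle^{d-s})$. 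So $[x]\in\Sigma_s(\Phi)$ if and only if $v(x)\perp\operatorname{Im}\beta_s$; and since $\ker\beta_s=(I_\Gamma)_s$ one has $\dim\operatorname{Im}\beta_s=h_\Gamma(s)$, so $(\operatorname{Im}\beta_s)^{\perp}\subseteq\mathbb C^{r}$ is a linear subspace of dimension $r-h_\Gamma(s)$. (3) The rule $[x]\mapsto[v(x)]$ is a morphism $v\colon\mathbb P\mathbb C^{n*}\to\mathbb P^{r-1}$ defined by the $r$ forms $\ell_i^{d-s}$ of degree $d-s$, and its base locus $\{[x]:\langle\ell_i,x\rangle=0\text{ for all }i\}$ is \emph{empty} precisely because $\Phi$ is concise (the $\ell_i$ span $\mathbb C^{n}$); a map $\mathbb P^{n-1}\to\mathbb P^{r-1}$ given by forms of one degree with empty base locus is finite onto its image, being the $(d-s)$-Veronese embedding followed by linear projection from a center disjoint from the Veronese. (4) Consequently $\Sigma_s(\Phi)=v^{-1}\bigl(\mathbb P((\operatorname{Im}\beta_s)^{\perp})\bigr)$, and a finite morphism cannot raise dimension, so $\dim\Sigma_s(\Phi)\le\dim\mathbb P((\operatorname{Im}\beta_s)^{\perp})=r-h_\Gamma(s)-1$; this rearranges to $r\ge h_\Gamma(s)+\dim\Sigma_s(\Phi)+1$, and with $h_\Gamma(s)\ge\operatorname{rank}\Phi_{s,d-s}$ the theorem follows.

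The crux is Step (3): identifying $\Sigma_s(\Phi)$ with the preimage of a linear space under a \emph{finite} map, which is exactly where conciseness of $\Phi$ is used and is indispensable — this hypothesis is implicit in the form of the theorem applied here and holds for $\det(X)$ and $\mathrm{perm}(X)$; a non-concise $\Phi$ must first be restricted to the smallest coordinate subspace containing it. The boundary cases are immediate: when $s=d$ some order-$d$ partial of $\Phi$ (a coefficient) is nonzero, so $\Sigma_d(\Phi)=\emptyset$ and the bound degenerates to $r\ge1$; and when $\Sigma_s(\Phi)=\emptyset$ the convention $\dim\emptyset=-1$ reduces the statement to the classical catalecticant (Sylvester) bound $r\ge\operatorname{rank}\Phi_{s,d-s}$, which Step (4) also delivers. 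The same argument, with $\Gamma$ any $0$-dimensional apolar subscheme and Step (2) carried out scheme-theoretically, should give the analogous lower bound for the cactus rank.
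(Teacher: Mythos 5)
Your proposal cannot be compared against an in-paper argument, because the paper does not prove this statement at all: it is quoted from \cite{LT} and used as a black box (e.g.\ in Proposition 4.7), so the relevant question is only whether your blind proof is sound. It essentially is, and it follows the same decomposition-based dimension count as Landsberg--Teitler's original argument, organized through apolarity: steps (1)--(2) correctly identify $\Sigma_s(\Phi)$ with the common zeros of $S^s\circ\Phi$ via Euler's relation and compute $\dim\operatorname{Im}\beta_s=h_\Gamma(s)\ge\operatorname{rank}\Phi_{s,d-s}$ from $I_\Gamma\subseteq\mathrm{Ann}(\Phi)$, and steps (3)--(4) (Veronese followed by a linear projection whose center misses the Veronese, hence a finite morphism, so preimages of linear spaces cannot jump in dimension) give $r\ge h_\Gamma(s)+\dim\Sigma_s(\Phi)+1$. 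You are also right, and it is worth saying explicitly, that conciseness is not cosmetic: as stated (with $\Sigma_s(\Phi)\subset\mathbb{P}\mathbb{C}^{n*}$ for the full ambient space) the inequality fails for non-concise forms --- for $\Phi=x_1^d$ in $n\ge 2$ variables one has $\operatorname{rank}\Phi_{s,d-s}=1$ and $\dim\Sigma_s(\Phi)=n-2$, yet $r(\Phi)=1$ --- and the only place your argument uses it is exactly where you say, the emptiness of the base locus of $v$; since $\det(X)$ and $\mathrm{perm}(X)$ are concise, the paper's applications are unaffected. Two small points to keep: the case $s=d$ must be (and is) treated separately, since the finiteness argument needs $d-s\ge 1$; and the reduction from arbitrary coefficients $\Phi=\sum c_i\ell_i^d$ to $\sum\ell_i^d$ uses that you work over $\mathbb{C}$, which is the setting of the theorem.
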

\vspace{.2in}
\begin{rmk} (\textbf{Z. Teitler})
If we define $\Sigma_s(\Phi)$ to be a subset of affine rather than projective space, then the above theorem  does not need $+1$ at the end, and does not need the statement that the dimension of the empty set is $-1$.

\end{rmk}
\vspace{.2in}

Using the Landsberg-Teitler formula we have:
\vspace{.2in}

\begin{prop}\label{prop-det-symmetric-result}
Let $X$ be a generic symmetric $n \times n$ matrix. For each $t$, $1\leq t\leq n$, we have

\begin{equation}\label{prop-det-symmetric-result-equation}
r(\det(X))\geq \frac{{n+1 \choose t}{n+1\choose t+1}}{n+1}+\frac{(n-t-1)(n-t)}{2}+(t+1)(n-t-1)+1.
\end{equation}

Asymptotically for large $n$ the maximum of the right hand side of Equation (\ref{prop-det-symmetric-result-equation}) occurs at $t=\lfloor n/2\rfloor$, and asympotically $r(\det X)\ge \frac{2^n}{n\sqrt{n\pi}}$.

\end{prop}

\begin{proof}
By Lemma 2.5 the dimension of the space of $t\times t$ minors of $X$ is the Narayana number $\frac{{n+1 \choose t}{n+1\choose t+1}}{n+1}$. The determinant of $X$ vanishes to order $t+1$ if and only if every minor of $X$ of size $n-t$ vanishes. Thus $\Sigma_t(\det_n)$ is the locus of matrices of rank at most $n-t-1$ so the $\dim \Sigma_t(\det_n)$ is $\frac{(n-t-1)(n-t)}{2}+(t+1)(n-t-1)$ (\cite{BH}, page 304). By the unimodality of the binomial coefficients the first term of the right hand side of Equation (\ref{prop-det-symmetric-result-equation}) is maximum at  $t=\lfloor {n/2}\rfloor$. The other terms in the right hand side of Equation (\ref{prop-det-symmetric-result-equation}) consist of a degree two polynomial in $t$ and is decreasing for $1\leq t$. So the asymptotic maximum of the right hand side is for $t=\lfloor {n/2}\rfloor$. A calculation using Stirling's formula gives the asymptotic lower bound.
\end{proof}
\vspace{0.2in}

\begin{ex}

Let $X$ be a $4\times 4$ generic symmetric matrix. The Hilbert sequence corresponding to the ideal $\mathrm {Ann}(\det(X))$ will be $H=(1,10,20,10,1)$. Using the Ranestad-Schreyer Proposition \ref{prop-mainRS} we have:
\begin{equation*}
\deg \Gamma \geq \frac{1}{d} \deg (\mathrm {Ann}(\det(X)))=\frac{1}{2} (42)=21.
\end{equation*}
Now using the Proposition \ref{prop-det-symmetric-result} we have
\begin{equation*}
r({\det}_4)\geq \frac{{4+1 \choose 2}{4+1\choose 2+1}}{4+1}+\frac{(4-2-1)(4-2)}{2}+(2+1)(4-2-1)+1=25,
\end{equation*}
which is a better lower bound. However, as $n$ increases the Ranestad-Schreyer Proposition \ref{prop-mainRS} gives better lower bounds for the cactus rank of the determinant than Proposition \ref{prop-det-symmetric-result}.

\end{ex}

Table \ref{Table:rank-symm-det} gives the lower bounds for the cactus rank (using RS bound) and rank (using LT) bound, and also the Sylvester rank of the determinant of an $n\times n$ generic symmetric matrix $X$, for $2\le n\le 6$ and also for $n\gg 0$, using the Stirling formula with Theorem \ref{thm:RS-rank-symm-det} and Proposition \ref{prop-det-symmetric-result}. Asympotically the RS lower bound for cactus rank is approximately $2^{n+1}$ times that for the rank from LT, so is significantly stronger in the light of Proposition 4.3.

\begin{table}[h]
\begin{center}
\caption{The determinant of the generic symmetric matrix}\label{Table:rank-symm-det}

\begin{tabular}{l*{7}{c}r}
\hline
$n$              & 2 & 3 & 4 & 5 & 6 & $n\gg 0$ \\
\hline
lower bound for $cr(\det(X))$ using RS & 2.5 & 7 & 21 & 66 & 209.5  &$ \frac{2^{2n+1}}{(n+1)\sqrt{(n+1)\pi}}$\\
\hline
lower bound for $r(\det(X))$ using LT           & 4 & 7 & 25 &  56 & 187 &$ 2^n/n\sqrt{n\pi}$\\
\hline
$l_{diff}(\det_n)$          & 3 & 6 & 20 &  50 & 175 & ${n\choose {\lfloor n/2\rfloor}}^2/ {\lfloor n/2\rfloor}$\\
\hline
\end{tabular}
\end{center}
\end{table}

H. Derksen and Z. Teitler in subsequent work have shown lower bounds for the cactus rank of $\det (X)$ that are asymptotically $3/2$ times those from Theorem \ref{thm:RS-rank-symm-det}. \cite[Example 4.3]{DT}\par

The following table gives the lower bounds for the cactus rank of the permanent of an $n\times n$ generic symmetric matrix $X$, for $2\leq n\leq 6$, and also for $n\gg 0$ using the Stirling formula and Theorem \ref{thm:RS-rank-symm-perm}, and we give also the Sylvester lower bound.

\begin{table}[h]
\begin{center}
\caption{The permanent of the generic symmetric matrix} \label{Table:rank-symm-perm}

\begin{tabular}{l*{7}{c}r}
\hline
$n$              & 2 & 3 & 4 & 5 & 6 &  $n\gg 0$ \\
\hline
lower bound for $cr(\mathrm{perm}(X))$ using RS & 1.6 &  4.6 & 14.3 & 47.3 & 164.6   &$ \frac{2^n(2^n+\sqrt{\pi n})}{6\sqrt{\pi n}}$\\

\hline
$l_{diff}(\mathrm{perm}(X))$          & 3 & 6 & 21 &  55 & 210 &$\frac{\sqrt{2}}{\sqrt{\pi n}} 2^{n-1}$\\
\hline

\end{tabular}
\end{center}
\end{table}

Note that for $n\leq 8$ and $n=10$ the $l_{diff}(\mathrm{perm}(X))$ is a larger lower bound. For $n=9$ and $n\ge 11$ our result using RS is larger than the $l_{diff}(\mathrm{perm}(X))$.
\vspace{.2in}


\section{Hafnian Invariants}
\label{inv}

In this section, we explore some facts about the degree three generators of the ideal $\mathrm{Ann}(\mathrm{perm}(X))$ for a generic symmetric matrix. The goal is to understand the degree three generators of the $\mathrm{Ann}(\mathrm{perm}(X))$, for a generic symmetric matrix (see Lemma \ref{prop:gens-deg3-perm-symm}). 
\vspace{0.2in}
\begin{Notation}
Throughout this section, $\mathfrak{S}_n$ is the symmetric group of order $n$, $X=(x_{ij})$ is a generic symmetric matrix in the polynomial ring $R^s$ and $Y$ is a generic symmetric matrix in the corresponding differential operator ring $S^s$. $\sigma\in\mathfrak{S}_n$ acts on $R^s=k[x_{ij}]$ as follows:
\begin{equation}\label{eq:action of Sn on vars}
\sigma(x_{ij})=x_{\sigma^{-1}(i)\sigma^{-1}(j)}.
\end{equation}
Let $\mathrm{MonHaf}_{2k}(X)$ denote the space of the monomials of the hafnian (Section 3 Equation (\ref{eq:hafnian-definition})) of a $2k\times 2k$ generic symmetric matrix $X$. 
\end{Notation}
\vspace{0.2in}
\begin{remark}\label{remark:new definition of hafnian}\cite{Vi} Let $X$ be a $2k\times 2k$ symmetric matrix. We can write the hafnian of $X$ as 
\begin{equation}\label{eq:new-def-Haf}
\mathrm{Hf}(X)=\frac{1}{2^k\cdot k!}\sum_{\sigma\in \mathfrak{S}_{2k}} x_{\sigma(1)\sigma(2)}x_{\sigma(3)\sigma(4)} \cdots x_{\sigma(2k-1)\sigma(2k)}.
\end{equation}
\end{remark}
By Equation \ref{eq:new-def-Haf} it is easy to see that $\mathrm{Hf}(X)$ is invariant under $\mathfrak{S}_{2k}$, and we have
\begin{equation}\label{eq:dimension of Haf space}
\dim_{\sf k} \mathrm{MonHaf}_{2k}(X)=\frac{(2k)!}{2^k\cdot k!}
\end{equation}

\begin{defi}\label{def:dual-maps} Let $n=2k$, and $X$ be a generic symmetric $n\times n$ matrix with variables in $R^s$. Recall that $P_k(X)$ is the space of permanents of $k\times k$ submatrices of $X$, and $M_k(X)$ the space of $k\times k$ minors of $X$.  We define the maps $\Phi$ and $\Psi$ as follows

$$ \Phi:\mathrm{MonHaf}_{2k}(Y)\longrightarrow S_k\circ \mathrm{perm}(X)=P_k(X),$$
$$ \Phi(h)=h\circ \mathrm{perm}(X).$$

 Let $\Omega$ denote the kernel of the map $\Phi$, and
 
 $$\Psi:\mathrm{MonHaf}_{2k}(Y)\longrightarrow S_k\circ \det (X)=M_k(X),$$
 $$\Psi(h)=h\circ \det(X).$$

 \end{defi}
 
 \begin{ex}
 Let $n=4$,
  $$X=
\left(%
\begin{array}{cccc}
 
  a & b & c & d \\
  b & e & f & g \\
  c & f & h& i \\
  d & g & i & j \\

 \end{array}%
\right),
Y=
\left(%
\begin{array}{cccc}
 
  A & B & C & D \\
  B & E & F & G \\
  C & F & H& I \\
  D & G & I & J \\

 \end{array}%
\right).
$$
In this case $\mathrm{Hf}(Y)=BI+CG+DF$ so we have $\dim_{\sf k} \mathrm{MonHaf}_{4}(Y)=3$. Consider the map $$\Phi:\mathrm{MonHaf}_{4}(Y)\longrightarrow S_2\circ \mathrm{perm}(X)=P_2(X)$$
 $$\Phi(h)=h\circ \mathrm{perm}(X).$$  
 We have
  $$\mathrm{Im}(\Phi)=\langle(2df + 2cg + 4bi, 2df + 4cg + 2bi, 4df + 2cg + 2bi)\rangle$$$$=\langle(bi, cg, df)\rangle$$
  
Hence $\dim_{\sf k} \mathrm{Im}(\Phi)=3$. So $\mathrm{Ker} \Phi=0$.

 Now let $$\Psi:\mathrm{MonHaf}_{4}\longrightarrow S_2\circ \det (X)=M_2(X)$$
 $$\Psi(h)=h\circ \det(X).$$ 
 The kernel of the map $\Psi$ is $\mathrm{MonHaf}_4\cap\mathrm{Ann}(\det(X))$.

We have $$\mathrm{Im}\Psi=\langle(- 2df - 2cg + 4bi, - 2df + 4cg - 2bi, 4df - 2cg - 2bi)\rangle$$
$$=\langle(cg - bi, df - bi)\rangle$$

is a two dimensional space. The kernel of $\Psi$ is $\langle BI+CG+DF\rangle$, which is a one dimensional space. Note also that
$$(\mathrm{Ann}(\mathrm{Hf}(X)))_2=(CG - BI, DF - BI)$$

Table \ref{table:s4hafchar} shows the character table of $\mathfrak{S}_4$ acting on the space $\mathrm{MonHaf}_4$.
\begin{table}[h]
\begin{center}
\caption{$\mathfrak{S}_4$/hafnian}\label{table:s4hafchar}

\begin{tabular}{l*{5}{c}r}
number of elements in the conjugacy class         & 1 & 6 & 8 & 6  & 3 \\

conjugacy class & $(1^4)$ & $(1^22)$  & (13) & (4) & $(2^2)$ \\
\hline
$\chi_{\mathrm{MonHaf}_4}(g)$ & 3 & 1 &  0 & 1 & 3 \\
\hline

\end{tabular}
\end{center}
\end{table}

Here we have $3^2(1)+6+3(3^2)+6=48$. So $|\chi(\mathrm{MonHaf}_4)|=2$. This representation is the sum of two irreducible representations of $\mathfrak{S}_4$ (see \cite{FH}, page 17). Indeed the character table of $\mathfrak{S}_4$, Table \ref{character table of s4}, shows that $\mathrm{MonHaf}_4=U\oplus W$ corresponding to the partitions $[4]$ and $[2,2]$. Hence the image of $\Psi$ is the irreducible representation $W$ and the Kernel of $\Psi$ corresponds to the trivial representation $U$. We also see that the image of $\Phi$ corresponds to the representation of $\mathrm{MonHaf}_4=U\oplus W$.

\begin{table}[h]
\begin{center}
\caption{character table of $\mathfrak{S}_4$}\label{character table of s4}

\begin{tabular}{l*{5}{c}r}
number of elements in the conjugacy class               & 1 & 6 & 8 & 6 & 3  \\

conjugacy class & $(1^4)$ & $(1^22)$ & $(13)$ & (4) & $(2^2)$  \\
\hline
trivial $U$ & 1 & 1 & 1 &  1 & 1 \\
\hline
alternating $U'$ & 1 & -1 & 1 &  -1 & 1 \\
\hline
standard $V$ & 3 & 1 & 0 &  -1 & -1 \\
\hline
$V'=V\otimes U'$ & 3 & -1 & 0 &  1 & -1 \\
\hline
 $W$ & 2 & 0 & -1 &  0 & 2 \\
\hline

\end{tabular}
\end{center}
\end{table}
 \end{ex}
 
 \vspace{0.2in}
 
\begin{lem} \label{lem:equivariant-n=6} For $n=6$, the map $\Phi$ is an $\mathfrak{S}_6$ equivariant map.
\end{lem}
\begin{proof}
Let $\mathfrak{h}\in \mathrm{Haf}_6(Y)$. Let
$$
X=\left (%
\begin{array}{cccccc}
 
  a & b & c & d & e & f\\
  b & g & h & i & j & k\\
  c & h & l & m & n & o\\
  d & i & m & p & q & r\\
  e & j & n & q & s & t\\
  f & k & o & r & t & u\\
  
 \end{array}%
\right),$$
and let $Y$ be the the corresponding matrix in the ring of differential operators in the variables $A,...,U$. Without loss of generality we can take $\mathfrak{h}=BMT$. We want to show that for any element $\sigma \in\mathfrak{S}_6$ we have:

$$\Phi(\sigma\cdot \mathfrak{h})=\sigma\cdot \Phi(\mathfrak{h})=\sigma\cdot (\mathfrak{h}\circ \mathrm{perm}(X)).$$

Let $\sigma$ be a transposition in $\mathfrak{S}_6$. Without loss of generality we can assume $\sigma=(1 4)$. We have 
$$(14)\cdot BMT=CIT$$

Hence we need to show:
$$\Phi(CIT)=(14)\cdot (BMT\circ \mathrm{perm}(X)).$$

We have:

$$
CIT\circ\mathrm{perm}(X)=2fjm + 2ekm + 4fin + 2dkn + 4eio + 2djo + 2fhq +$$$$ 4ckq + 2boq + 2ehr + 4cjr+ 2bnr + 4dht + 8cit + 4bmt.$$

We also have

$$
BMT\circ\mathrm{perm}(X)=4fjm + 4ekm + 2fin + 2dkn + 2eio + 2djo + 2fhq +2ckq $$

Hence 

$$(14)\cdot(BMT\circ\mathrm{perm}(X))=CIT\circ\mathrm{perm}(X).$$
\end{proof}

\begin{ex}\label{ex:5.6}
Let $n=6$, and $X$ be the generic symmetric matrix in the proof of Lemma \ref{lem:equivariant-n=6}. In this case $\dim_{\sf k}\mathrm{MonHaf}_{6}(Y)=15$. The kernel of the map $$\Phi:\mathrm{MonHaf}_{6}(Y)\longrightarrow S_3\circ \mathrm{perm}(X)=P_3(X)$$
 $$\Phi(\mathfrak{h})=\mathfrak{h}\circ \mathrm{perm}(X).$$  is a five dimensional space  $\Omega=<F_1,\ldots ,F_5>$, where  $F_1,\ldots, F_5$ are the polynomials introduced in the Lemma \ref{prop:gens-deg3-perm-symm}. Hence $\dim_{\sf k} \Omega=5$, and $\dim_{\sf k} \mathrm{Im}\Phi=10$.
 
Now let $$\Psi:\mathrm{MonHaf}_{6}(Y)\longrightarrow S_3\circ \det (X)=M_3(X)$$
 $$\Psi(\mathfrak{h})=\mathfrak{h}\circ \det(X).$$ The kernel of $\Psi$ is $\mathrm{MonHaf}_6(Y)\cap\mathrm{Ann}(\det(X))$.Using Macaulay 2 for calculations we have $$\mathrm{Im}\Psi=\langle F_1(X),\ldots ,F_5(X)\rangle,$$ is a five dimensional space. Hence the kernel of $\Psi$ is a 10-dimensional space.

The character table of $\mathfrak{S}_6$ acting on the space $\mathrm{MonHaf}_6$ is shown in Table \ref{table:s6hafchar}.
\begin{table}[h]
\begin{center}
\caption{$\mathfrak{S}_6$/hafnian}\label{table:s6hafchar}

\begin{tabular}{l*{11}{c}r}
\hline
 number of elements\\ in the class        & 1 & 15 & 40 & 45 & 90 & 120 & 144 & 15 & 90 & 40 & 120  \\
 \hline
conjugacy class & $(1^6)$ & $(1^42)$ & $(1^33)$ & $(1^22^2)$ & $(1^24)$ & (123) & (15) & $(2^3)$ & (24) & $(3^2)$ & (6)  \\
\hline
$\chi_{\mathrm{MonHaf}_6}(g)$ & 15 & 3 & 0 &  3 & 1 & 0 & 0 & 7 & 1 & 3 & 1 \\
\hline

\end{tabular}
\end{center}
\end{table}

Using Table \ref{table:s6hafchar} we have

\begin{equation}\label{eq:s6-3}
|\langle \chi_{\mathrm{MonHaf}_6},\chi_{\mathrm{MonHaf}_6}\rangle|/720=3
\end{equation}
 \end{ex}
Hence by using Equation \ref{eq:s6-3}, Table \ref{table:s6hafchar}, and the character table of  $\mathfrak{S}_6$ we have the following lemma.

\vspace{0.2in}

\begin{lem}\label{lem:rep explanation 6}
 The $\mathfrak{S}_6$ representation of  $\mathrm{MonHaf}_6$ is the sum of three irreducible representations of $\mathfrak{S}_6$, namely to those corresponding to the partitions $[6]$, $[4,2]$ and $[2,2,2]$ of dimensions 1, 9 and 5 respectively. The 10-dimensional image of the map $\Phi$ is the sum of the irreducible representations corresponding to the partitions [6] and [4,2], and the 5-dimensional kernel corresponds to the irreducible representation of partition $[2,2,2]$. 

There is a duality here between $\Phi$ and $\Psi$, since the kernel of $\Phi$ is the image of $\Psi$ after we replace the variables of $R^s$ with the variables of $S^s$, and vice versa.
\end{lem}

 \vspace{.2in}
 In a conversation with Claudiu Raicu, he explained that the representation of $\mathrm{MonHaf}_{2k}$ is the sum of the irreducible representations of $\mathfrak{S}_{2k}$ corresponding to the partitions with all even parts. We checked using a program we wrote  in JAVA the analogous result for $n=8$, namely that $\mathrm{MonHaf}_{8}$ is a direct sum of irreducible representations of $\mathfrak{S}_{8}$ corresponding to the partitions with all even parts. 
 \vspace{.2in}

\begin{thm}\label{thm:Kerber}(\cite{KE}, page 272, Theorem 5.8.3)
For each $n\in \mathbb{N}$ we have the following decomposition of plethysm of identity representations of symmetric group

$$[2]\odot [n]=\sum_{\alpha\vdash n}[2\alpha],$$

if $2\alpha=(2\alpha_1,2\alpha_2,\ldots)$.

\end{thm}

 $\mathrm{MonHaf}_{2m}$ is a subspace of $S_m(S_2V)$ and since it does not involve any diagonal element it can be considered as a subspace of $S_m(\wedge^2V)$. The following Theorem about the $\mathrm{Gl}(V)$ representations of the space $S_m(S_2V)$ seems relevant.

\vspace{0.2in}

\begin{thm}(\cite{We},  page 65)
Let $\sf k$ be a commutative ring of characteristic $0$.

$$S_m(S_2E)=\oplus_{|\lambda |=2m, \lambda'_i \text{ even for all i }} L_\lambda E.$$
$$S_m(\wedge^2E)=\oplus_{|\lambda |=2m,\lambda_i \text{ even for all i }}L_\lambda E.$$
\end{thm}


\end{document}